\documentclass[oneside]{amsart}
\usepackage[backref]{hyperref}
\usepackage{amsmath,color,graphicx,amsfonts,amssymb,latexsym,amsthm,enumerate,comment,mathtools}
\graphicspath{{./img_pdf/}}
\usepackage[hang,small]{caption}
\usepackage[subrefformat=parens]{subcaption}
\usepackage{tikz,tikz-cd}
\usetikzlibrary{intersections,calc,arrows.meta}
\tikzset{point/.style={circle, fill=black, inner sep=1pt, minimum size=3pt}}
\tikzcdset{every label/.append style={font=\small}}

\def\Z{\mathbb{Z} }
\def\E{\mathbb{E} }
\def\K{\mathbb{K} }

\def\R{\mathbb{R} }

\def\L{\mathbb{L} }
\def\M{\mathbb{M} }
\def\W{\mathbb{W} }
\def\C{\mathcal{C} }

\newcommand{\snc}{\mathrm{snc}}

\newcommand{\wsum}[4]{\sum_{#3\in#1}w(#2,#3)w(#3,#4)}

\DeclareMathOperator{\rank}{rank}
\DeclareMathOperator{\im}{im}

\theoremstyle{definition}
\newtheorem{definition}{Definition}
\newtheorem{theorem}{Theorem}[section]
\newtheorem{proposition}[theorem]{Proposition}
\newtheorem{lemma}[theorem]{Lemma}
\newtheorem{corollary}[theorem]{Corollary}
\newtheorem{remark}{Remark}

\newtheorem{example}{Example}
\title{Morse-Bott inequalities on Lefschetz complexes}
\author{Yuto Nishikawa}
\address{Graduate School of Science and Engineering, Saitama University,
255 Shimo-Okubo, Sakura-ku, Saitama-shi, Saitama 338-8570, Japan}
\email{nishikawa.y.492@ms.saitama-u.ac.jp}
\date{}
\subjclass[2020]{Primary 57Q70; Secondary 55U15}
\keywords{Discrete Morse-Bott theory, Lefschetz complexes,
CW complexes, Morse-Bott inequalities, Betti numbers}

\begin{document}
\begin{abstract}
We develop a discrete Morse-Bott theory for Lefschetz complexes with real-valued
incidence functions and finitely many cells in each dimension. Our main result is a
reduction procedure, based on four elementary operations called Moves, for computing
the remainder series $R_t$ associated with the Morse-Bott inequality. These Moves
preserve $R_t$, and the procedure recursively constructs, using finitely many Moves in
each dimension, a disjoint union of two-cell elementary blocks with vanishing
Poincar\'{e} series. The resulting decomposition
computes the coefficients of $R_t$ by counting the blocks in the corresponding
dimensions. The same procedure also computes the Betti numbers of the original complex
by counting the cells removed as isolated cells. As consequences, we obtain the
nonnegativity of the coefficients of $R_t$ and the Morse-Bott inequality for Lefschetz
complexes. The method applies in particular to CW complexes, where the nonnegativity is obtained by
identifying each coefficient of $R_t$ with the number of elementary blocks in the
corresponding dimension.
\end{abstract}

\maketitle

\section{Introduction}
 
Morse theory, developed by Morse~\cite{morse1925} in the 1920s, relates the topology
of a smooth manifold to the critical points of smooth functions defined on it.
Bott~\cite{bott54} extended this to \emph{Morse-Bott theory}, where isolated critical
points are replaced by critical submanifolds.  The \emph{Morse-Bott inequality}
expresses the Poincar\'{e} polynomial of the manifold in terms of the Poincar\'{e}
polynomial of each critical submanifold shifted by its Morse index, with a remainder
of the form $(1+t)Q_t$, where $Q_t$ has nonnegative integer coefficients.
 
Forman~\cite{forman98} introduced \emph{discrete Morse theory}, a combinatorial
analogue of Morse theory defined on CW complexes.  A discrete Morse function assigns
values to cells, and a cell is \emph{critical} if it satisfies a condition analogous
to that of an isolated critical point.  The discrete Morse inequality then relates
the numbers of critical cells to the Betti numbers of the complex.
 
Yaptieu~\cite{yaptieu2017} proposed a discrete Morse-Bott theory for CW complexes,
replacing isolated critical cells with more general weakly critical sets.
In~\cite{nishikawa2025DMBT}, the Morse-Bott inequality was established for finite CW
complexes: for such a complex $M$ and a discrete Morse-Bott function $f$ on $M$,
\[
  \sum_{C \in \mathcal{C}_f} P_t(C) = P_t(M) + (1+t)R_t(M, f),
\]
where $\mathcal{C}_f$ denotes the collection of weakly critical sets and
$R_t(M, f)$ is a polynomial with nonnegative integer coefficients.
 
The algebraic framework of CW complexes has been abstracted to the notion of a
\emph{Lefschetz complex}~\cite{kacz2016stable,mrozek2016stable}. We use a degreewise
finite extension of this notion: a triple $(X,\dim,w)$ consisting of a set of cells
whose $k$-dimensional members form a finite set for each $k \geq 0$, together with a
dimension function and a real-valued incidence function
$w\colon X \times X \to \R$ satisfying ``$w^2=0$''.
This formulation retains only the combinatorial boundary structure of a CW complex,
discarding its geometric and topological data, and serves as an algebraic setting
for combinatorial dynamics~\cite{kacz2016stable,mrozek2016stable} and, more
recently, for a theory of cancellations and depth posets in persistent
homology~\cite{edel2024DMFLef}.
 
\medskip
 
The present paper extends discrete Morse-Bott theory to Lefschetz complexes and
introduces a new approach --- based on a collection of \emph{Moves} --- that yields
a concise proof of the nonnegativity of $R_t$.  This provides an alternative to
the proof of~\cite{nishikawa2025DMBT} and makes the argument transparent by
reducing the problem to a disjoint union of elementary blocks.
 
\medskip
 
\noindent\textbf{Discrete Morse-Bott theory on Lefschetz complexes.}
We introduce the notion of a discrete Morse-Bott function on a Lefschetz complex
$\K = (X, \dim, w)$, show that weakly critical sets are subcomplexes,
and that a discrete Morse-Bott function induces a discrete vector field.
We prove the following.
 
\begin{theorem}[Morse-Bott inequality for Poincar\'{e} series]\label{thm:MB}
  Let $f$ be a discrete Morse-Bott function on a Lefschetz complex $\K$.
  Then the remainder series $R_t(\K, f)$ has nonnegative integer coefficients and
  satisfies
  \[
    \sum_{C \in \mathcal{C}_f} P_t(C) = P_t(\K) + (1+t)R_t(\K, f),
  \]
  where $\mathcal{C}_f$ denotes the collection of weakly critical sets of~$f$.
\end{theorem}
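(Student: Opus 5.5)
The plan is to reduce Theorem~\ref{thm:MB} to an algebraic statement about chain complexes over $\R$ and then verify nonnegativity by an explicit reduction. First I would set up the relevant chain complex. A discrete Morse-Bott function $f$ induces a discrete vector field $V$ on $\K$, whose pairs $(\sigma,\tau)$ with $\dim\tau=\dim\sigma+1$ and $w(\tau,\sigma)\neq 0$ consist of non-critical cells. The weakly critical sets $C\in\mathcal{C}_f$ are subcomplexes and are disjoint from all paired cells. I then \emph{define} $R_t(\K,f)$ by the identity
\[
  \sum_{C \in \mathcal{C}_f} P_t(C) - P_t(\K) = (1+t)R_t(\K,f).
\]
The content of the theorem is that this is possible with nonnegative integer coefficients. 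Since everything is degreewise finite, each coefficient is a finite computation, and I would argue one dimension at a time.

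The key tool is a filtration by $f$. Order the weakly critical sets and paired blocks by value of $f$ to obtain a filtration of $\K$ by subcomplexes $\K_0\subset\K_1\subset\cdots$. Each successive quotient $\K_{i}/\K_{i-1}$ is either a weakly critical set $C$, or a two-cell block $\{\sigma,\tau\}$ with $w(\tau,\sigma)\neq0$, which is acyclic. Degreewise finiteness ensures that in each dimension only finitely many steps matter. The long exact sequence of the pair $(\K_i,\K_{i-1})$, together with rank-nullity, gives
\[
  P_t(\K_{i-1}) + P_t(\K_i/\K_{i-1}) = P_t(\K_i) + (1+t)\,r^{(i)}_t,
\]
where $r^{(i)}_k=\rank(\partial\colon H_{k+1}(\K_i,\K_{i-1})\to H_k(\K_{i-1}))$ is a nonnegative integer. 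Summing telescopically yields the identity with $R_t=\sum_i r^{(i)}_t$, which has nonnegative coefficients.

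For the paper's ``Moves'' viewpoint, I would equivalently realize each step as an operation on the incidence matrix. Cancelling a pair (a Forman-type reduction with real weights $w$) preserves homology and the remainder. The homology of each $C$ is handled by performing changes of basis inside $C$ until the complex splits as a direct sum of isolated cells and two-cell blocks $\{\sigma,\tau\}$ with $P_t=0$ contribution. Blocks whose two cells lie in the same $C$ cancel on both sides. Blocks whose cells lie in different weakly critical sets are exactly those counted by $R_t$, with a block in dimensions $(k,k+1)$ contributing $t^k$.

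The main obstacle is showing that these Moves are well defined on a Lefschetz complex with arbitrary real incidences, that is, that the condition $w^2=0$ is preserved after each elimination. A second difficulty is ensuring that the process terminates degreewise even though $X$ may be infinite. For the first point, I would use the standard formula $w'(\tau',\sigma')=w(\tau',\sigma')-w(\tau',\sigma)w(\tau,\sigma)^{-1}w(\tau,\sigma')$ and check $w'^2=0$ directly. For the second, I would proceed by induction on dimension, using finiteness of $X_k$.
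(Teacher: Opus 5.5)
Your route differs from the paper's. You filter $\K$ by subcomplexes whose successive differences are the weakly critical sets and the pairs of $-\nabla_s f$, and then telescope long exact sequences. The paper never orders these pieces: the identity is the count of Lemma~\ref{lem:MBpoly}, and nonnegativity comes from a dimension-by-dimension reduction in which Move~(III) replaces $f$ by $\dim$ in two adjacent degrees. Your telescoping algebra is correct, and defining $R_t$ by the identity is harmless since $1+t$ is a unit in $\Z[[t]]$.

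The gap is the sentence ``order the weakly critical sets and paired blocks by value of $f$'', on which everything rests. The $\K_i$ must be \emph{face-closed}: $\alpha\in\K_i$ and $w(\alpha,\beta)\neq0$ imply $\beta\in\K_i$. This is strictly stronger than the paper's notion of subcomplex, and it is what makes $C_\bullet(\K_{i-1})$ a subcomplex of $C_\bullet(\K_i)$ with quotient $C$ carrying $w|_{C\times C}$. A pair $(\tau,\sigma)$ carries two values $f(\tau)<f(\sigma)$, and the Morse-Bott condition only controls single incidences. So a priori a pair can straddle a weakly critical set $C$ in both directions. Take cells $\alpha,\beta\in C$ at level $1$, $f(\tau)=0$, $f(\sigma)=2$, with $w(\alpha,\tau)\neq0$ and $w(\sigma,\beta)\neq0$. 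This is compatible with $U^\snc\le1$, $D^\snc\le1$ and weak criticality at each of the four cells. Yet it would force the pair to come both before and after $C$, so no filtration would exist. What excludes it is (L2): Lemma~\ref{lem:incidence} applied to $w(\alpha,\tau)w(\tau,\sigma)\neq0$ gives $\widetilde\tau\neq\tau$ with $f(\sigma)\le f(\widetilde\tau)\le f(\alpha)$, a contradiction. Even so, values alone do not determine the order. Take a $2$-cell $\tau$, $1$-cells $\sigma,\tau'$ and a $0$-cell $\sigma'$ with $\partial\tau=\sigma-\tau'$, $\partial\sigma=\partial\tau'=\sigma'$, $f(\tau)=f(\tau')=0$ and $f(\sigma)=f(\sigma')=1$. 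The pairs $(\tau,\sigma)$ and $(\tau',\sigma')$ carry identical values, but only $\{\tau',\sigma'\}$ is face-closed.

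To close the gap you need an acyclicity lemma. Suppose $P\neq Q$ are pieces and $w(\alpha,\beta)\neq0$ for some $\alpha\in P$, $\beta\in Q$. Then $\max f|_Q\le\max f|_P$ and $\min f|_Q\le\min f|_P$. Both equalities can hold only if $P$ and $Q$ are pairs, and then the upper cell of $Q$ has smaller dimension than that of $P$. Each case combines the fact that $f(\alpha)\ge f(\beta)$ unless $(\alpha,\beta)\in-\nabla_s f$ with Lemma~\ref{lem:incidence} as above. The lemma is needed exactly when the incidence hits the upper cell of a receiving pair or leaves from the lower cell of an emitting pair. Sublevel sets of the lexicographic key $(\max f|_P,\min f|_P)$, with ties between pairs broken by the dimension of the upper cell, are then face-closed. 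Each step adds one weakly critical set or a family of mutually non-incident pairs. For infinite $\K$ these keys need not be well-ordered, so run the argument on the finite skeleton $\K^{\le N}$, which changes neither side of the identity in degrees $\le N-2$.

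With these additions your proof is a valid alternative. It identifies $R_t(\K,f)_k$ with $\sum_i\rank\bigl(\partial\colon H_{k+1}(\K_i,\K_{i-1})\to H_k(\K_{i-1})\bigr)$ rather than with a count of elementary blocks (Corollary~\ref{cor:count}). The obstacles you flagged (preservation of $w^2=0$, termination) do not arise on your route. Your Moves paragraph is not an argument: that blocks with cells in different weakly critical sets are exactly what $R_t$ counts is the content of Lemma~\ref{lem:stage}.
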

 
\noindent\textbf{Moves and decomposition into elementary blocks.}
To prove the nonnegativity of $R_t(\K, f)$, we introduce four
\emph{Moves} --- elementary operations on pairs $(\K, f)$ that preserve
$R_t(\K, f)$ --- and use them to reduce $(\K, f)$ to a disjoint
union of \emph{elementary blocks}.  An elementary block $E(\mu, \kappa)$ is a
two-cell Lefschetz complex with $w(\mu, \kappa) = 1$ and all other incidence numbers
zero; its Poincar\'{e} series vanishes.
 
\begin{theorem}[Elementary block reduction]\label{thm:elementary}
  Let $f$ be a discrete Morse-Bott function on a Lefschetz complex $\K$.
  There exists a Lefschetz complex $\E$, constructed recursively by applying finitely
  many Moves in each dimension, such that $\E$ is a disjoint union of elementary blocks
  and $R_t(\K, f) = R_t(\E, \dim)$.
\end{theorem}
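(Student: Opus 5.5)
We reduce $(\K,f)$ dimension by dimension, using at each stage one of the four Moves, each of which leaves $R_t$ unchanged. We first normalize the pair. By the earlier results, each weakly critical set $C\in\mathcal{C}_f$ is a subcomplex and $f$ induces a discrete vector field whose non-critical pairs $(\sigma,\tau)$ satisfy $w(\sigma,\tau)\neq 0$. Since
\[
(1+t)R_t(\K,f)=\sum_{C\in\mathcal{C}_f}P_t(C)-P_t(\K),
\]
$R_t$ is determined by the Poincar\'e series involved. The Moves I plan to use are these: (i) removing a paired non-critical couple $(\sigma,\tau)$ by a Forman-type collapse, with the incidence function corrected by $w'(a,b)=w(a,b)-w(a,\tau)w(\sigma,b)/w(\sigma,\tau)$; (ii) removing an isolated cell, meaning a cell with zero incidence to all others that forms its own weakly critical set; (iii) splitting off a cycle/boundary pair, which produces an elementary block $E(\mu,\kappa)$ after rescaling so that $w(\mu,\kappa)=1$; (iv) a change of basis inside a single weakly critical set or inside $\K$, a triangular operation on the cells of one dimension. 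Each Move preserves $P_t(\K)$ and each $P_t(C)$, or changes both sides equally, because it is a chain homotopy equivalence over $\R$.

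The key steps, in order, are as follows. First, use Move (i) repeatedly to cancel every gradient pair. Because $X_k$ is finite for each $k$, only finitely many Moves are needed in each dimension. Since the cancellation respects the filtration by $f$, afterwards $\K$ consists only of the cells of the weakly critical sets, with $f$ constant on each set. Second, proceed by induction on dimension $k=0,1,2,\dots$. Within each $C$, apply Gaussian elimination (Move (iv)) to the boundary matrix $\partial_{k+1}|_C$, then cancel the internal pairs with nonzero incidence using Move (i) inside $C$. This leaves in $C$ only cells representing $H_*(C)$, with zero internal boundary. Third, the remaining complex has boundary maps only between cells of different weakly critical sets, and its homology is $H_*(\K)$. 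Perform Smith-type reduction over $\R$ on $\partial_{k+1}$ of this residual complex, again using Move (iv) across sets and respecting the $f$-order. Each nonzero pivot $(\mu,\kappa)$ then becomes an isolated two-cell component. After rescaling, that component is $E(\mu,\kappa)$, and we split it off with Move (iii). Fourth, the cells left with zero incidence are homology classes of $\K$; remove them with Move (ii), counting them to obtain the Betti numbers.

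At the end, $\E$ is the disjoint union of the split-off blocks, with $f$ replaced by $\dim$. Each block $E(\mu,\kappa)$ with $\dim\kappa=k+1$ contributes $t^k+t^{k+1}=(1+t)t^k$ to $\sum_C P_t(C)-P_t$, since its two cells lie in distinct weakly critical sets while the block itself has vanishing $P_t$. Hence $R_t(\E,\dim)=R_t(\K,f)$, and the coefficient of $t^k$ counts the blocks in dimension $k$.

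The main obstacle is checking that Moves (i) and (iv) applied across different weakly critical sets keep the result a Lefschetz complex (that is, $w^2=0$ is preserved) and preserve $R_t$. Performing elimination across sets can change the individual $P_t(C)$ unless every operation is triangular with respect to the order induced by $f$. I would handle this by always eliminating toward the higher $f$-value, in the manner of the persistence pairing algorithm, and by verifying that the modified incidence $w'$ on each weakly critical subcomplex is a chain isomorphism. I would also check that the recursion terminates in each dimension because the $X_k$ are finite.
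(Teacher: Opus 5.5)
Your first, second and fourth steps match the paper's construction. Move~(I) in case~(a) cancels the strict gradient pairs, Move~(I) in case~(b) cancels the equal-valued incident pairs inside weakly critical sets, and Move~(II) removes the leftover isolated cells, which count the Betti numbers. The Gaussian elimination you insert before the internal cancellations is unnecessary, since the quotient $\K\{\mu,\kappa\}$ already performs it. However, your Move~(i) was defined only for gradient pairs. The equal-valued case needs its own check that the quotient of $\K$ induces the quotient of $\W(\K,f)$, as in Lemma~\ref{lem:cancelwc}.

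The gap is your third step, which carries the content of the theorem and which you leave as ``the main obstacle''. A change of basis mixing cells of different weakly critical sets is not specified as an operation on Morse-Bott pairs: you do not say which cells it produces or what $f$-values they get. Moreover, an unguided elimination step does not preserve $R_t$. Take $1$-cells $\mu,\alpha$ and $0$-cells $\kappa,\beta$ with $\partial\mu=\kappa-\beta$, $\partial\alpha=\kappa$, and $f(\kappa)=0$, $f(\alpha)=1$, $f(\beta)=2$, $f(\mu)=3$. This pair is already in the state reached after your second step: all cells are weakly critical and there are no equal-valued incidences. Here $R_t(\K,f)=\rank B_0(\K)=2$. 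Cancelling the natural pivot $(\mu,\kappa)$ produces $w'(\alpha,\beta)=1$ with $f(\alpha)<f(\beta)$, a new strict gradient pair, and $\K\{\mu,\kappa\}\sqcup E(\mu,\kappa)$ has $R_t=1$. Eliminating ``toward the higher $f$-value'' can be made to work. For instance, always take $\mu$ of minimal $f$-value among the $(k+1)$-cells with nonzero boundary and $\kappa$ of maximal $f$-value in $\partial\mu$; then every correction lands on a pair with $f(\tau)\ge f(\mu)>f(\kappa)\ge f(\sigma)$. But that argument is exactly what is missing.

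The paper avoids any triangularity requirement by using its Moves~(III) and~(IV). Your list has no Move that changes $f$, although you tacitly use one at the end when you put $\dim$ on the blocks.
\begin{itemize}
\item The paper runs your first two steps at stage $n$ only in dimensions $n,n+1,n+2$ (Lemma~\ref{lem:stage}). Afterwards every $n$-cell and $(n+1)$-cell is critical. So $w(\alpha,\beta)\neq0$ forces $f(\alpha)>f(\beta)$ whenever one of the two cells has dimension $n$ or $n+1$, and $f$ orders these pairs exactly as $\dim$ does.
\item Move~(III) therefore replaces $f$ by $\dim$ on cells of dimension at most $n+1$, shifting higher cells by a constant, without changing $\W$ or $R_t$.
\item Once $f=\dim$ in dimensions $n$ and $n+1$, no cancellation between these dimensions can create a strict gradient pair or an equal-valued incidence. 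So every $(\mu,\kappa)$ with $w(\mu,\kappa)\neq0$ may be cancelled and $E(\mu,\kappa)$ adjoined (Move~(IV)). Then $\rank B_n$ drops by one (Lemma~\ref{lem:rankcancel}), $\mu$ and $\kappa$ are isolated in $\W$, and the block restores the balance. Put differently, the coefficient of $t^n$ in $R_t$ has become $\rank B_n$, which involves no ordering at all.
\end{itemize}

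The stage-wise organization matters as well. Each stage uses finitely many Moves, the $n$-cells disappear at stage $n$, and the coefficient of $t^k$ is fixed from stage $k+2$ on; this is what makes the recursion well defined. When $\K$ is infinite, your plan of first cancelling every gradient pair in all dimensions performs infinitely many Moves before the next step, and it needs such a stabilization argument.

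Finally, there is an index slip: a block $E(\mu,\kappa)$ with $\dim\kappa=k$, not $k+1$, contributes $(1+t)t^k$.
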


Since each elementary block has vanishing Poincar\'{e} series, comparing the two sides of
the identity for $(\E, \dim)$ shows that $R_t(\K, f)$ counts the elementary blocks of $\E$,
graded by dimension (Corollary~\ref{cor:count}). More precisely, the coefficient of $t^k$
in $R_t(\K,f)$ is the number of elementary blocks whose upper cell has dimension $k+1$.
In particular, the coefficients of $R_t(\K,f)$ are nonnegative, which completes the proof
of Theorem~\ref{thm:MB}. Since a finite CW complex defines a Lefschetz complex as in
Example~\ref{ex:CWasLef}, Theorem~\ref{thm:MB} yields the corresponding Morse-Bott
inequality for finite CW complexes; this specialization is recorded in
Corollary~\ref{cor:MBCW}.

\section{Preliminaries}
In this section, we review the fundamental concepts and properties of Lefschetz complexes. 
\subsection{Fundamental concepts}
\begin{definition}[Lefschetz complex]
Let $X$ be a set, and let $\dim\colon X\to\Z_{\ge0}$ and $w\colon X\times X\to\R$ be functions. Assume that for every $k\in\Z_{\ge0}$, the set $\dim^{-1}(k)$ is finite. A triple $(X,\dim, w)$ is a \textbf{Lefschetz complex} if the following conditions hold:
\begin{enumerate}[(L1)]
    \item For any $\tau,\sigma\in X$ with $w(\tau,\sigma)\ne0$, we have $\dim\tau-\dim\sigma=1$.
    \item For any $\tau,\nu\in X$, we have $\wsum{X}{\tau}{\sigma}{\nu}=0$.
\end{enumerate}
Here the sum in (L2) is finite: by (L1), the term $w(\tau,\sigma)w(\sigma,\nu)$ vanishes unless $\sigma\in\dim^{-1}(\dim\tau-1)$, and this set is finite by assumption.
\end{definition}

An element $\sigma\in X$ is called a cell. If $\dim\sigma=k$, we denote it by $\sigma^k$, and call it a $k$-cell. From now on, let $\K:=(X,\dim, w)$ be a Lefschetz complex. We also write $\sigma\in \K$ for $\sigma\in X$. Throughout the paper, the symbols $\tau$, $\sigma$ and $\nu$ denote cells whose dimensions
decrease by one at each step, so that $w(\tau,\sigma)$ and $w(\sigma,\nu)$ may be nonzero.
The symbols $\mu$ and $\kappa$ denote a pair of cells to be cancelled, the symbol $\eta$
denotes a cell having no nonzero incidence number with any cell, and other cells are
denoted by $\alpha$ and $\beta$.

\begin{example}\label{ex:CWasLef}
    Let $X$ be the set of cells of a CW complex having finitely many cells in each dimension. Then $(X,\dim,w)$ is a Lefschetz complex, where $\dim$ is the cellular dimension function and $w(\tau,\sigma)$ denotes the coefficient of $\sigma$ in the cellular boundary $\partial\tau$.
\end{example}

Condition (L2) implies that a cell connecting $\tau$ to $\nu$ through nonzero incidence numbers is never unique. This is used repeatedly below.

\begin{lemma}\label{lem:incidence}
    For any cells $\tau,\sigma,\nu\in \K$ with $w(\tau,\sigma)w(\sigma,\nu)\ne0$, there is a cell $\widetilde{\sigma}\ne\sigma$ such that $w(\tau,\widetilde{\sigma})w(\widetilde{\sigma},\nu)\ne0$.

\end{lemma}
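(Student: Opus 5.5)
The plan is to argue by contradiction directly from condition (L2). Fix cells $\tau,\sigma,\nu\in\K$ with $w(\tau,\sigma)w(\sigma,\nu)\ne0$. By (L1), $\dim\sigma=\dim\tau-1$ and $\dim\nu=\dim\sigma-1$. Let $S=\dim^{-1}(\dim\tau-1)$, which is finite by the definition of a Lefschetz complex. By (L1), every term $w(\tau,\alpha)w(\alpha,\nu)$ with $\alpha\notin S$ vanishes, so the sum in (L2) reduces to the finite sum over $S$:
\[
0=\sum_{\alpha\in X}w(\tau,\alpha)w(\alpha,\nu)=\sum_{\alpha\in S}w(\tau,\alpha)w(\alpha,\nu).
\]

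Now suppose, for contradiction, that no cell $\widetilde{\sigma}\ne\sigma$ satisfies $w(\tau,\widetilde{\sigma})w(\widetilde{\sigma},\nu)\ne0$. Then every term in the finite sum with $\alpha\ne\sigma$ vanishes. The sum therefore collapses to the single term $w(\tau,\sigma)w(\sigma,\nu)$, and (L2) gives $w(\tau,\sigma)w(\sigma,\nu)=0$. This contradicts the hypothesis. Hence some $\widetilde{\sigma}\ne\sigma$ has $w(\tau,\widetilde{\sigma})w(\widetilde{\sigma},\nu)\ne0$. By (L1) such a $\widetilde{\sigma}$ automatically has the same dimension as $\sigma$, although the statement does not require this.

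There is no real obstacle here. The only point needing care is that the sum in (L2) is a genuine finite sum, so that removing the vanishing terms is legitimate even when $X$ is infinite. The remark following the definition already ensures this, since only cells of dimension $\dim\tau-1$ contribute and there are finitely many of them.
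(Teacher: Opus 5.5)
Your proof is correct and follows the paper's argument exactly: assume every term with $\widetilde{\sigma}\ne\sigma$ vanishes, so the (L2) sum collapses to the nonzero term $w(\tau,\sigma)w(\sigma,\nu)$, which is a contradiction. Your explicit check that the sum is finite is the same point the paper makes right after the definition of a Lefschetz complex.
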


\begin{proof}
    Suppose that $w(\tau,\widetilde{\sigma})w(\widetilde{\sigma},\nu)=0$ for any cell $\widetilde{\sigma}\ne\sigma$. Then all terms with $\alpha\neq\sigma$ vanish in the sum
    $\sum_{\alpha\in \K} w(\tau,\alpha)w(\alpha,\nu)$.
    Since $\K$ satisfies condition (L2), we obtain
    \[
    0=\wsum{\K}{\tau}{\alpha}{\nu}=w(\tau,\sigma)w(\sigma,\nu)\ne0,
    \]
    which is a contradiction. 
\end{proof}

\begin{definition}
    Let $\{\K_i:=(X_i,\dim_i,w_i)\}_{i\in I}$ be a family of Lefschetz complexes. We define their disjoint union $\bigsqcup_{i\in I}\K_i:=(X_I,\dim_I,w_I)$, where $X_I:=\sqcup_{i\in I}X_i$, the function $\dim_I\colon X_I\to\Z_{\ge0}$ is given by $\dim_I|_{X_i}:=\dim_i$ for each $i\in I$, and the function $w_I\colon X_I\times X_I\to\R$ is given by
    \[
    w_I(\tau,\sigma):=\begin{cases}
      w_i(\tau,\sigma) & \text{if }\tau,\sigma\in X_i\quad\text{for some }i\in I,\\
      0 & \text{otherwise}.
    \end{cases}
    \]
\end{definition}

\begin{lemma}\label{lem:disjoint}
    If $\dim_I^{-1}(k)$ is finite for every $k\in\Z_{\ge0}$, then $\bigsqcup_{i\in I}\K_i$ is
    a Lefschetz complex.
\end{lemma}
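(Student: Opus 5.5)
We verify the finiteness hypothesis in the definition of a Lefschetz complex together with conditions (L1) and (L2) for $(X_I,\dim_I,w_I)$. Finiteness of $\dim_I^{-1}(k)$ for each $k$ is exactly the hypothesis. We also note that $\dim_I$ and $w_I$ are well defined because the $X_i$ are taken disjointly. For $w_I$, a pair $(\tau,\sigma)$ lies in at most one $X_i\times X_i$, so the case distinction is unambiguous.

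For (L1), we take $\tau,\sigma\in X_I$ with $w_I(\tau,\sigma)\neq0$. By the definition of $w_I$, the value is not given by the ``otherwise'' case, so there is an $i$ with $\tau,\sigma\in X_i$ and $w_i(\tau,\sigma)\neq0$. Since $\K_i$ satisfies (L1), we get $\dim_i\tau-\dim_i\sigma=1$. As $\dim_I$ restricts to $\dim_i$ on $X_i$, this gives $\dim_I\tau-\dim_I\sigma=1$.

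For (L2), we fix $\tau,\nu\in X_I$. First we note that the sum $\sum_{\sigma\in X_I}w_I(\tau,\sigma)w_I(\sigma,\nu)$ is finite. By (L1), just verified, only the cells $\sigma\in\dim_I^{-1}(\dim_I\tau-1)$ can contribute, and this set is finite by hypothesis. Let $i$ be the index with $\tau\in X_i$. If $\sigma\notin X_i$, then $w_I(\tau,\sigma)=0$, so the sum reduces to $\sum_{\sigma\in X_i}w_I(\tau,\sigma)w_I(\sigma,\nu)$. There are two cases. If $\nu\notin X_i$, then every term contains the factor $w_I(\sigma,\nu)=0$, so the sum is $0$. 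If $\nu\in X_i$, then every term equals $w_i(\tau,\sigma)w_i(\sigma,\nu)$, and the sum vanishes by (L2) for $\K_i$.

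There is no real obstacle here. The only point needing care is the finiteness of the sum in (L2). Each individual $\K_i$ is degreewise finite, but an infinite family could fail to be degreewise finite, so the hypothesis on $\dim_I^{-1}(k)$ is used exactly at this step, together with (L1) for the union.
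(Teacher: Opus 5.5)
Your proof is correct and is essentially the paper's argument: (L1) is inherited from the summand containing both cells, and (L2) reduces to the summand containing $\tau$, with the case of $\nu$ in another summand vanishing termwise (you split cases on the summand of $\tau$ rather than on whether $\tau$ and $\nu$ share a summand, which makes no difference). One small correction to your closing remark: the sum in (L2) is finite even without the hypothesis, since its nonzero terms have $\sigma$ in the finite set $\dim_i^{-1}(\dim_i\tau-1)\subset X_i$, so the hypothesis is actually needed only for the degreewise-finiteness clause of the definition, as your first paragraph already says.
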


\begin{proof}
    Condition (L1) is inherited from each $\K_i$: if $w_I(\tau,\sigma)\neq0$, then
    $\tau,\sigma\in X_i$ for some $i\in I$ and $w_I(\tau,\sigma)=w_i(\tau,\sigma)$, so
    $\dim_I\tau-\dim_I\sigma=1$.

    For (L2), let $\tau,\nu\in X_I$. If $\tau,\nu\in X_i$ for some $i\in I$, then
    $w_I(\tau,\sigma)w_I(\sigma,\nu)=0$ for every $\sigma\in X_I\setminus X_i$, and hence
    \[
    \sum_{\sigma\in X_I}w_I(\tau,\sigma)w_I(\sigma,\nu)
    =\sum_{\sigma\in X_i}w_I(\tau,\sigma)w_I(\sigma,\nu)
    =\sum_{\sigma\in X_i}w_i(\tau,\sigma)w_i(\sigma,\nu)=0
    \]
    by condition (L2) for $\K_i$. If $\tau\in X_i$ and $\nu\in X_j$ with $i\neq j$, then for
    every $\sigma\in X_I$ at least one of $w_I(\tau,\sigma)$ and $w_I(\sigma,\nu)$ vanishes, so the
    sum is zero.
\end{proof}

\begin{definition}
    A subset $A\subset \K$ is a \textbf{subcomplex} of $\K$ if
    $(A,\dim|_A,w|_{A\times A})$ is a Lefschetz complex.
\end{definition}

In fact, condition (L1) and the finiteness assumption are automatically inherited by any subset, so being a subcomplex is a condition on (L2) alone. We record this criterion, which is used repeatedly below.

\begin{proposition}\label{prp:subcpx}
    Let $A$ be a subset of $\K$. The following are equivalent:
    \begin{enumerate}[(1)]
        \item $A$ is a subcomplex of $\K$.
        \item The triple $(A,\dim|_A,w|_{A\times A})$ satisfies condition (L2).
    \end{enumerate}
\end{proposition}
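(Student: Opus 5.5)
The plan is to unwind the definition of a subcomplex and check that the only nontrivial condition in the definition of a Lefschetz complex is (L2). By definition, $A$ is a subcomplex exactly when the triple $(A,\dim|_A,w|_{A\times A})$ is a Lefschetz complex. This means three things hold: the finiteness assumption on $(\dim|_A)^{-1}(k)$ for each $k$, condition (L1), and condition (L2). So (1)$\Rightarrow$(2) is immediate, since a Lefschetz complex satisfies (L2) by definition.

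For (2)$\Rightarrow$(1), I will verify the two remaining requirements directly from the corresponding properties of $\K$.

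\emph{Finiteness.} For each $k\in\Z_{\ge0}$ we have $(\dim|_A)^{-1}(k)=A\cap\dim^{-1}(k)\subset\dim^{-1}(k)$. The right-hand side is finite because $\K$ is a Lefschetz complex, so the left-hand side is finite as well.

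\emph{Condition (L1).} Take $\tau,\sigma\in A$ with $w|_{A\times A}(\tau,\sigma)\ne0$. Then $w(\tau,\sigma)\ne0$ with $\tau,\sigma\in X$, so (L1) for $\K$ gives $\dim\tau-\dim\sigma=1$. Hence $\dim|_A\tau-\dim|_A\sigma=1$.

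Combining these with the hypothesis (2), the triple satisfies every axiom, so $A$ is a subcomplex. I should also note that the sum in (L2) for $A$ is well defined (finite). This follows from the finiteness and (L1) just established, by the same remark made after the definition of a Lefschetz complex.

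There is no real obstacle here. The only point requiring care is that the sum in (L2) for $A$ ranges over $A$ rather than $X$, so (L2) is genuinely not inherited. This is why it remains as the condition in (2), while the other axioms pass to arbitrary subsets.
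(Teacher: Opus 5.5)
Your proof is correct and matches the paper's argument: (1)$\Rightarrow$(2) holds by definition, and for (2)$\Rightarrow$(1) you get (L1) and degreewise finiteness from $\K$ by restricting to the subset $A$, exactly as the paper does. Your closing observation is also accurate: the sum in (L2) for $A$ runs over $A$ rather than $X$, which is why (L2) is the only condition that does not pass automatically to subsets.
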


\begin{proof}
    Assume that $A$ is a subcomplex of $\K$. Then assertion (2) holds.

    Conversely, assume that assertion (2) holds. Fix any cells $\tau,\sigma\in A$ with $w(\tau,\sigma)\ne0$. Since $\dim|_A (\alpha)=\dim\alpha$ for any cell $\alpha\in A$, we obtain the following equality:
    \[
    \dim|_A(\tau)-\dim|_A(\sigma)=\dim\tau-\dim\sigma=1.
    \]
    Thus, the triple $(A,\dim|_A,w|_{A\times A})$ satisfies (L1). Moreover, $(\dim|_A)^{-1}(k)$
    is finite for every $k$, being a subset of $\dim^{-1}(k)$, and (L2) holds by assertion (2).
    Hence, assertion (1) holds.
\end{proof}

\begin{definition}
Denote by $C_k(\K)$ the real vector space generated by the $k$-cells of $\K$. Define $C_{-1}(\K):=\{0\}$. For any integer $k\in\Z_{\ge0}$, we define the boundary homomorphism
\[
\partial_k^\K\colon C_k(\K)\to C_{k-1}(\K),
\]
for any $k$-cell $\tau^k\in \K$ by
\[
\partial_k^\K(\tau):=\sum_{\sigma^{k-1}\in \K}w(\tau,\sigma)\sigma,
\]
and extend $\partial_k^\K$ linearly to $C_k(\K)$.
\end{definition}

When the Lefschetz complex $\K$ is clear from the context, we simply write $\partial_k$ instead of $\partial_k^\K$.

\begin{proposition}
    For any integer $k\in\Z_{>0}$, the equality $\partial_{k-1}\circ\partial_k = 0$ holds.
\end{proposition}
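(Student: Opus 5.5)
Since $\partial_{k-1}\circ\partial_k$ is a composition of linear maps, it suffices to check that it vanishes on the basis of $C_k(\K)$ given by the $k$-cells. Fix a $k$-cell $\tau$ with $k\geq 1$. If $k=1$, the map $\partial_0$ lands in $C_{-1}(\K)=\{0\}$, so the composite is trivially zero. We may therefore assume $k\geq 2$.

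For $k\geq 2$ we expand directly using linearity of $\partial_{k-1}$:
\[
\partial_{k-1}\partial_k(\tau)=\sum_{\sigma^{k-1}\in\K}w(\tau,\sigma)\,\partial_{k-1}(\sigma)
=\sum_{\nu^{k-2}\in\K}\Bigl(\sum_{\sigma^{k-1}\in\K}w(\tau,\sigma)w(\sigma,\nu)\Bigr)\nu .
\]
All sums here are finite, because each $\dim^{-1}(j)$ is finite. This justifies interchanging the order of summation.

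The one point needing care is that the inner sum runs only over $(k-1)$-cells, whereas condition (L2) sums over all of $X$. By (L1), the product $w(\tau,\alpha)w(\alpha,\nu)$ vanishes unless $\dim\alpha=\dim\tau-1=k-1$. Hence the inner sum equals $\sum_{\alpha\in\K}w(\tau,\alpha)w(\alpha,\nu)$, which is zero by (L2). Every coefficient is therefore zero, so $\partial_{k-1}\partial_k(\tau)=0$, and the result follows by linearity. I expect no real obstacle beyond this bookkeeping of the summation ranges.
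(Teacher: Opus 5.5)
Your proof is correct and follows the same route as the paper: evaluate $\partial_{k-1}\partial_k$ on a $k$-cell $\tau$, exchange the finite sums, and apply (L2) to each coefficient of $\nu$. Your separate treatment of $k=1$, and your use of (L1) to pass from the sum over $(k-1)$-cells to the sum over all of $X$, spell out bookkeeping that the paper's one-line computation leaves implicit.
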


\begin{proof}
    Fix any integer $k\in\Z_{>0}$ and any cell $\tau^k\in \K$. By condition (L2), we have
    \[
    \partial_{k-1}\circ\partial_k(\tau)=\sum_{\sigma\in \K}w(\tau,\sigma)\partial_{k-1}(\sigma)=\sum_{\sigma\in \K}w(\tau,\sigma)\sum_{\nu\in \K}w(\sigma,\nu)\nu=\sum_{\nu\in \K}0\nu=0.
    \]
\end{proof}

\begin{definition}
    Let $k\in\Z_{\ge0}$. We define the $k$-th homology group $H_k(\K)$ as follows:
    \begin{align*}
        H_k(\K)&:=Z_k(\K)/B_k(\K),
    \end{align*}
    where $Z_k(\K):=\ker \partial_k^\K,B_k(\K):=\im \partial_{k+1}^\K$. Define the $k$-th Betti number $b_k^\K$ and the Poincar{\'e} series $P_t(\K)$ as follows:
    \begin{align*}
        b_k^\K&:=\rank H_k(\K)\\
        P_t(\K)&:=\sum_{k=0}^\infty b_k^\K t^k
    \end{align*}

\end{definition}

Extending the notation above to $k=-1$, we set $B_{-1}(\K):=\im\partial_0=\{0\}$.

\begin{lemma}\label{lem:disjointsum}
    Let $\{\K_i\}_{i\in I}$ be a family of Lefschetz complexes whose disjoint union
    $\L:=\bigsqcup_{i\in I}\K_i$ is a Lefschetz complex. For every $k\in\Z_{\ge0}$, only
    finitely many $i\in I$ satisfy $\rank C_k(\K_i)\neq0$, and
    \[
    \rank B_k(\L)=\sum_{i\in I}\rank B_k(\K_i),\qquad
    b_k^\L=\sum_{i\in I}b_k^{\K_i}.
    \]
    In particular, $P_t(\L)=\sum_{i\in I}P_t(\K_i)$.
\end{lemma}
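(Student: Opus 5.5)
The plan is to show that the chain complex of $\L$ splits, degree by degree, as a direct sum of the chain complexes of the $\K_i$, and then to read off ranks of boundaries and Betti numbers summand by summand.

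\emph{Finiteness.} Fix $k\ge0$. The sets $\dim_i^{-1}(k)$, $i\in I$, are pairwise disjoint subsets of $\dim_I^{-1}(k)$, and the latter is finite because $\L$ is a Lefschetz complex. Hence only finitely many of them are nonempty, that is, only finitely many $i$ satisfy $\rank C_k(\K_i)\ne0$. Every sum below is therefore finite.

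\emph{Direct sum decomposition.} Since $X_I=\sqcup_i X_i$, we have $C_k(\L)=\bigoplus_{i\in I}C_k(\K_i)$, where only finitely many summands are nonzero. Now take a $k$-cell $\tau\in X_i$. By the definition of $w_I$, the coefficient $w_I(\tau,\sigma)$ vanishes for $\sigma\notin X_i$ and equals $w_i(\tau,\sigma)$ for $\sigma\in X_i$. Therefore $\partial_k^\L(\tau)=\partial_k^{\K_i}(\tau)$, so $\partial_k^\L$ maps $C_k(\K_i)$ into $C_{k-1}(\K_i)$ and
\[
\partial_k^\L=\bigoplus_{i\in I}\partial_k^{\K_i}.
\]

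\emph{Ranks.} For a direct sum of linear maps, the image is the direct sum of the images and the kernel is the direct sum of the kernels. Hence $B_k(\L)=\bigoplus_i B_k(\K_i)$ and $Z_k(\L)=\bigoplus_i Z_k(\K_i)$. Taking dimensions gives the first formula,
\[
\rank B_k(\L)=\sum_{i\in I}\rank B_k(\K_i).
\]
Moreover, $H_k(\L)\cong\bigoplus_i Z_k(\K_i)/B_k(\K_i)$, and all spaces involved are finite-dimensional, so
\[
b_k^\L=\dim Z_k(\L)-\dim B_k(\L)=\sum_{i\in I}b_k^{\K_i}.
\]
The case $k=0$ is covered by the same argument, since then $\partial_0=0$ and $C_{-1}=\{0\}$.

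\emph{Poincar\'e series.} Comparing the coefficient of $t^k$ on both sides for each $k$ gives $P_t(\L)=\sum_i P_t(\K_i)$, interpreted coefficientwise. This is well defined because for each fixed $k$ only finitely many $b_k^{\K_i}$ are nonzero.

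The only point needing care is this finiteness. It is what makes the sums meaningful and allows the infinite family to be treated degreewise as a finite direct sum. Beyond that, the argument is routine linear algebra.
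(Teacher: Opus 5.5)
Your proposal is correct and follows essentially the same route as the paper: finiteness from the finiteness of $\dim_I^{-1}(k)$, the observation that $\partial_k^\L$ preserves each $C_\bullet(\K_i)$ so that the chain complex of $\L$ is the direct sum of those of the $\K_i$, and then the fact that images and homology commute with direct sums. You simply spell out the linear-algebra details that the paper leaves implicit.
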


\begin{proof}
    Since $\dim^{-1}(k)$ is finite in $\L$ for every $k$, only finitely many summands
    contain cells of a given dimension. If $w(\tau,\sigma)\neq0$ in $\L$, then $\tau$ and
    $\sigma$ lie in the same $\K_i$, so $\partial_k^\L$ maps $C_k(\K_i)$ into
    $C_{k-1}(\K_i)$. Hence the chain complex of $\L$ is the direct sum of the chain
    complexes of the $\K_i$, and both the image of $\partial$ and the homology commute
    with direct sums.
\end{proof}

\begin{lemma}\label{lem:rankid}
    For every $k\in\Z_{\ge0}$,
    \[
    \rank C_k(\K)=\rank B_k(\K)+\rank B_{k-1}(\K)+b_k^\K.
    \]
\end{lemma}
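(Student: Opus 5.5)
This follows from rank–nullity for the boundary map $\partial_k\colon C_k(\K)\to C_{k-1}(\K)$ and the definition of $b_k^\K$. First, $C_k(\K)$ is finite-dimensional, since $\dim^{-1}(k)$ is finite, so all ranks involved are finite and may be added and subtracted freely. Applying rank–nullity to $\partial_k$ gives
\[
\rank C_k(\K)=\rank Z_k(\K)+\rank \im\partial_k^\K=\rank Z_k(\K)+\rank B_{k-1}(\K),
\]
because $B_{k-1}(\K)=\im\partial_k^\K$ by definition.

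The case $k=0$ needs care. Since $C_{-1}(\K)=\{0\}$, the map $\partial_0$ is zero, so $Z_0(\K)=C_0(\K)$. The convention $B_{-1}(\K)=\{0\}$ then makes the formula above hold for $k=0$ as well.

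Next, the equality $\partial_k\circ\partial_{k+1}=0$ established above gives $B_k(\K)\subset Z_k(\K)$. Both are finite-dimensional real vector spaces, so
\[
\rank Z_k(\K)=\rank B_k(\K)+\rank\bigl(Z_k(\K)/B_k(\K)\bigr)=\rank B_k(\K)+b_k^\K.
\]
Substituting this into the first identity yields the claim.

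The argument is routine; the only step that needs care is the bookkeeping at $k=0$ with the conventions $C_{-1}=\{0\}$ and $B_{-1}=\{0\}$. One should also note that $\partial_{k+1}$ is defined on the finite-dimensional space $C_{k+1}(\K)$, which holds because $\dim^{-1}(k+1)$ is finite, so $B_k(\K)$ has a well-defined finite rank.
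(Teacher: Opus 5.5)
Your proof is correct and is essentially the paper's argument: the paper also applies rank–nullity to $\partial_k\colon C_k(\K)\to B_{k-1}(\K)$ and to the quotient $Z_k(\K)\to H_k(\K)$, and then adds the two resulting identities. The only difference is presentation: the paper phrases these two steps as short exact sequences, while you spell out the $k=0$ convention and the finite-dimensionality explicitly.
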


\begin{proof}
    The two short exact sequences
    \begin{align*}
    &0\to Z_k(\K) \to C_k(\K) \to B_{k-1}(\K) \to 0,\\
    &0 \to B_k(\K) \to Z_k(\K) \to H_k(\K) \to 0
    \end{align*}
    give, by the rank-nullity theorem, $\rank C_k(\K)=\rank Z_k(\K)+\rank B_{k-1}(\K)$ and
    $\rank Z_k(\K)=\rank B_k(\K)+b_k^\K$. Adding these gives the assertion.
\end{proof}

Let $f\colon Y\to\R$ be a function. We say $f$ is a function on a Lefschetz complex $\K=(X,\dim,w)$ if $X\subset Y$. We also write $f\colon\K\to\R$ for a function $f\colon Y\to\R$ on $\K$.

This convention allows us to keep the same function $f$ when passing to the smaller
complexes obtained by the Moves of Section~\ref{sec:moves}.

\begin{definition}
    Let $f\colon\K\to\R$ be a function. For any cell $\sigma\in \K$, we define two numbers $U(\sigma)$ and $D(\sigma)$ as follows:
    \[
    U(\sigma):=\#\{\tau\in \K\mid w(\tau,\sigma)\ne0,f(\sigma)\ge f(\tau)\}
    \]
    \[
    D(\sigma):=\#\{\nu\in \K\mid w(\sigma,\nu)\ne0,f(\sigma)\le f(\nu)\}
    \]
\end{definition}

\begin{definition}
        Let $f\colon\K\to\R$ be a function. For any cell $\sigma\in \K$, we define two numbers $U^\snc(\sigma)$ and $D^\snc(\sigma)$ as follows:
    \[
    U^\snc(\sigma):=\#\{\tau\in \K\mid w(\tau,\sigma)\ne0,f(\sigma)> f(\tau)\}
    \]
    \[
    D^\snc(\sigma):=\#\{\nu\in \K\mid w(\sigma,\nu)\ne0,f(\sigma)< f(\nu)\}
    \]
\end{definition}

Here the superscript $\snc$ stands for ``strongly noncritical'': the cells counted by $U^\snc(\sigma)$ and $D^\snc(\sigma)$ are those counted by $U(\sigma)$ and $D(\sigma)$ with a strict inequality of $f$-values. In particular $U^\snc(\sigma)\le U(\sigma)$ and $D^\snc(\sigma)\le D(\sigma)$.

When it is necessary to indicate $\K$ and $f$ explicitly, we write $U_{\K,f}(\sigma)$,
$D_{\K,f}(\sigma)$, $U^\snc_{\K,f}(\sigma)$ and $D^\snc_{\K,f}(\sigma)$. We use the same
convention for $m_k$, $u_k$, $d_k$, $m_k^\snc$, $u_k^\snc$ and $d_k^\snc$.

\begin{definition}
    Let $f\colon\K\to\R$ be a function and $r\in\R$. If the set $\{\sigma\in\K\mid f(\sigma)=r\}$ is nonempty, we call it a \textbf{level set} of $f$. For a level set $L$, the set $\{\sigma\in L\mid U^\snc(\sigma)+D^\snc(\sigma)=0\}$ is called a \textbf{weakly critical set}. Denote by $\mathcal{C}_f$ the family of all nonempty weakly critical sets.
\end{definition}

\begin{remark}\label{rem:wcunique}
    Two weakly critical cells lie in the same weakly critical set if and only if they have
    the same value under $f$. In particular, a weakly critical set is determined by the
    level set containing it.
\end{remark}

When it is necessary to indicate $\K$ explicitly, we write $\mathcal{C}_f$ as $\mathcal{C}_{\K,f}$.

\subsection{Chain homotopy equivalence}

The cancellation of a pair of cells with nonzero incidence number, introduced in
Section~\ref{sec:moves}, replaces a Lefschetz complex by a smaller one whose chain complex
is chain homotopy equivalent to the original one (Lemma~\ref{lem:ChHoMuKappa}). In this
subsection we recall the notions involved and record that such a replacement leaves the
Betti numbers unchanged.

Let $(A_\bullet,\partial_\bullet)$ and $(B_\bullet,d_\bullet)$ be chain complexes. We recall
the following standard notions of homological algebra (see, e.g., \cite[\S2.1]{Hatcher2002top}).
\begin{itemize}
    \item A family of homomorphisms $\varphi_\bullet\colon A_\bullet\to B_\bullet$ is a
    \textbf{chain map} if $\varphi_{n-1}\circ\partial_n=d_n\circ\varphi_n$ for every $n$.
    \item A family of homomorphisms $h_\bullet\colon A_\bullet\to B_{\bullet+1}$ is a
    \textbf{chain homotopy} between chain maps $\varphi,\psi\colon A\to B$ if
    $h_{n-1}\circ\partial_n+d_{n+1}\circ h_n=\varphi_n-\psi_n$ for every $n$. The chain maps
    $\varphi$ and $\psi$ are \textbf{chain homotopic} if such an $h$ exists.
    \item The chain complexes $A$ and $B$ are \textbf{chain homotopy equivalent} if there are
    chain maps $\varphi\colon A\to B$ and $\psi\colon B\to A$ such that $\varphi\circ\psi$ and
    $\psi\circ\varphi$ are chain homotopic to the identities.
\end{itemize}

\begin{lemma}\label{lem:rankhom}
    Let $(A,\partial)$ and $(B,d)$ be chain complexes. If $A$ and $B$ are chain homotopy equivalent, then for every $n$, the $n$-th homology groups are isomorphic. In particular, $\rank H_n(A)=\rank H_n(B)$.
\end{lemma}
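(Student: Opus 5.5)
The plan is to show directly that homology is functorial and that chain homotopic maps induce the same map on homology. Then a chain homotopy equivalence induces mutually inverse isomorphisms.

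First, I will check that any chain map $\varphi\colon A\to B$ induces a well-defined homomorphism $\varphi_*\colon H_n(A)\to H_n(B)$ given by $[z]\mapsto[\varphi_n z]$.
\begin{itemize}
  \item If $\partial_n z=0$, then $d_n\varphi_n z=\varphi_{n-1}\partial_n z=0$, so $\varphi_n$ sends cycles to cycles.
  \item If $z=\partial_{n+1}y$, then $\varphi_n z=d_{n+1}\varphi_{n+1}y$, so $\varphi_n$ sends boundaries to boundaries.
\end{itemize}
Functoriality, $(\varphi\circ\psi)_*=\varphi_*\circ\psi_*$ and $\mathrm{id}_*=\mathrm{id}$, is immediate from this formula.

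Next, suppose $h$ is a chain homotopy between $\varphi$ and $\psi$, and let $z$ be a cycle. Then
\[
\varphi_n z-\psi_n z=h_{n-1}\partial_n z+d_{n+1}h_n z=d_{n+1}h_n z,
\]
which is a boundary. Hence $\varphi_*=\psi_*$.

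Now take the chain maps $\varphi\colon A\to B$ and $\psi\colon B\to A$ from the definition of a chain homotopy equivalence. By the previous step and functoriality,
\[
\varphi_*\circ\psi_*=(\varphi\circ\psi)_*=\mathrm{id}_*=\mathrm{id},
\]
and likewise $\psi_*\circ\varphi_*=\mathrm{id}$. So $\varphi_*\colon H_n(A)\to H_n(B)$ is an isomorphism, and in particular the ranks agree.

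No step is a real obstacle; this is the standard argument, and it could simply be cited from \cite[\S2.1]{Hatcher2002top}. The only point needing care is the degree bookkeeping in the homotopy identity. At the bottom degree one uses the convention $C_{-1}=0$, so the identity still holds there.
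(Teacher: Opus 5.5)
Your proposal is correct and takes the same route as the paper, which simply cites the standard fact that chain homotopic maps induce the same homomorphism on homology \cite[Prop.~2.12]{Hatcher2002top}. You have written that argument out in full, together with the functoriality of induced maps.
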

\begin{proof}
    This follows immediately from the fact that chain homotopic maps induce the same homomorphism on homology (see, e.g., \cite[Prop.~2.12]{Hatcher2002top}).
\end{proof}

\subsection{Discrete vector fields}

We now introduce discrete vector fields, which formalize the notion of an arrow on a Lefschetz complex.

\begin{definition}
    A set $V\subset X\times X$ is a \textbf{discrete vector field} on $\K$ if it satisfies the following two conditions:
    \begin{enumerate}[(V1)]
        \item For any pair $(\tau,\sigma)\in V$, we have $w(\tau,\sigma)\neq0$.
        \item For any distinct pairs $(\tau,\sigma),(\widetilde{\tau},\widetilde{\sigma})\in V$, we have $\{\tau,\sigma\}\cap\{\widetilde{\tau},\widetilde{\sigma}\}=\emptyset$.
    \end{enumerate}
\end{definition}

In our illustrations, cells of the same dimension are placed horizontally,
and higher-dimensional cells appear above lower-dimensional ones. Throughout
this and the subsequent examples, we draw an edge whenever $w(\tau,\sigma)\neq 0$
and place the label $w(\tau,\sigma)$ near that edge. A pair $(\tau,\sigma)\in V$ is drawn
as an arrow from $\sigma$ to $\tau$, that is, from the cell of lower dimension to the cell
of higher dimension. This is the convention behind the notation $\sigma\to_f\tau$
introduced in Section~\ref{sec:MBfunctions}.

\begin{example}
Figure~\ref{fig:dvf-example} shows, on the same Lefschetz complex, an example of a discrete vector field and an example that is not a discrete vector field. The symbols $\nu_i$, $\sigma_i$, and $\tau_i$ denote cells. Red arrows represent the pairs that belong to the subset $V$.
\end{example}

\begin{figure}[htbp]
\centering
\begin{tikzpicture}[scale=1]
    \begin{scope}
      \node (a) at (0,0) {$\nu_1$};
      \node (b) at (0,1.5) {$\sigma_1$};
      \node (c) at (0,3) {$\tau_1$};
      \node (d) at (2,0) {$\nu_2$};
      \node (e) at (2,1.5) {$\sigma_2$};
      \node (f) at (2,3) {$\tau_2$};

      \draw[->,red,thick] (a) -- (b) node[near end, left] {$1$};
      \draw (b) -- (c) node[near end, left] {$1$};
      \draw (d) -- (e) node[near end, right] {$-1$};
      \draw (e) -- (f) node[near end, right] {$-1$};
      \draw (a) -- (e) node[near end, above] {$1$};
      \draw (d) -- (b) node[near end, above] {$-1$};
      \draw[->,red,thick] (e) -- (c) node[near end, above] {$-1$};
      \draw (b) -- (f) node[near end, above] {$1$};

      \node at (1,-0.5) {$V=\{(\sigma_1,\nu_1),(\tau_1,\sigma_2)\}$};
    \end{scope}

    \begin{scope}[shift={(4,0)}]
      \node (a) at (0,0) {$\nu_1$};
      \node (b) at (0,1.5) {$\sigma_1$};
      \node (c) at (0,3) {$\tau_1$};
      \node (d) at (2,0) {$\nu_2$};
      \node (e) at (2,1.5) {$\sigma_2$};
      \node (f) at (2,3) {$\tau_2$};

      \draw[->,red,thick] (a) -- (b) node[near end, left] {$1$};
      \draw[->,red,thick] (b) -- (c) node[near end, left] {$1$};
      \draw (d) -- (e) node[near end, right] {$-1$};
      \draw (e) -- (f) node[near end, right] {$-1$};
      \draw (a) -- (e) node[near end, above] {$1$};
      \draw (d) -- (b) node[near end, above] {$-1$};
      \draw (e) -- (c) node[near end, above] {$-1$};
      \draw (b) -- (f) node[near end, above] {$1$};

      \node at (1,-0.5) {$V=\{(\sigma_1,\nu_1),(\tau_1,\sigma_1)\}$};
    \end{scope}
\end{tikzpicture}
\caption{Two subsets $V\subset X\times X$: the left one is a discrete vector
field, while the right one violates condition~(V2).}
\label{fig:dvf-example}
\end{figure}

Every function $f$ on $\K$ determines two subsets of $X\times X$, which serve as the discrete
counterpart of the negative gradient flow of $f$.

\begin{definition}
    Let $f\colon\K\to\R$ be a function. The sets $-\nabla f$ and $-\nabla_s f$ are defined as follows:
    \begin{align*}
        -\nabla f&:=\{(\tau,\sigma)\in X\times X\mid w(\tau,\sigma)\neq0,f(\tau)\le f(\sigma)\}\\
        -\nabla_s f&:=\{(\tau,\sigma)\in X\times X\mid w(\tau,\sigma)\neq0,f(\tau)< f(\sigma)\}
    \end{align*}
    We call $-\nabla f$ the \textbf{gradient vector field} of $f$, and $-\nabla_s f$ the
    \textbf{strict gradient vector field} of $f$.
\end{definition}

By Proposition~\ref{prp:gvfisdvf} and Proposition~\ref{prp:gvfisdvfMB}, these sets are
indeed discrete vector fields when $f$ is a discrete Morse function and a discrete
Morse-Bott function, respectively.

\begin{remark}\label{rem:degree}
    Let $f\colon\K\to\R$ be a function. Directly from the definitions,
    \[
    U(\sigma)=\#\{\tau\in\K\mid(\tau,\sigma)\in-\nabla f\},\qquad
    D(\sigma)=\#\{\nu\in\K\mid(\sigma,\nu)\in-\nabla f\},
    \]
    and likewise $U^\snc(\sigma)$ and $D^\snc(\sigma)$ count the pairs of $-\nabla_s f$ having
    $\sigma$ as second and as first coordinate, respectively. Thus $U$ and $D$ (resp.\
    $U^\snc$ and $D^\snc$) are the in-degree and the out-degree of $\sigma$ in $-\nabla f$
    (resp.\ in $-\nabla_s f$), viewed as a directed graph on $X$.
\end{remark}

\begin{definition}
    Let $V$ be a discrete vector field on $\K$. The set
    \[
    F(V):=\{\sigma\in \K \mid \sigma \text{ belongs to no pair in } V\}
    \]
    is called the \textbf{fixed-point set} of $V$.
\end{definition}

The fixed-point set consists of the cells left unpaired by $V$; for gradient vector fields
it will play the role of the set of critical points.

\section{Discrete Morse functions}

In this section, we introduce discrete Morse functions on Lefschetz complexes and
establish an identity for the Poincar\'{e} series of $\K$. The nonnegativity of the
remainder term is not proved here; it is obtained in Section~\ref{sec:moves} as
Corollary~\ref{cor:M}.

\begin{definition}[Discrete Morse function]
    A function $f\colon\K\to\R$ is a \textbf{discrete Morse function} on $\K$ if for any cell $\sigma\in \K$, $U(\sigma)\le1$ and $D(\sigma)\le 1$.
\end{definition}

The definition bounds $U(\sigma)$ and $D(\sigma)$ separately, but they cannot both be
equal to $1$: condition (L2), through Lemma~\ref{lem:incidence}, forces the following
stronger inequality.

\begin{lemma}\label{lem:one}
  For any discrete Morse function $f\colon\K\to\R$ and for any cell $\sigma\in \K$, we have the following inequality
  \[
  U(\sigma)+D(\sigma)\le1.
  \]
\end{lemma}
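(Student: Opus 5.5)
We argue by contradiction, following Forman's classical argument. Since $U(\sigma)\le1$ and $D(\sigma)\le1$ by definition, it suffices to rule out $U(\sigma)=D(\sigma)=1$. So suppose there are cells $\tau$ and $\nu$ with $w(\tau,\sigma)\ne0$, $f(\tau)\le f(\sigma)$, $w(\sigma,\nu)\ne0$ and $f(\sigma)\le f(\nu)$. By (L1), $\dim\tau=\dim\sigma+1$ and $\dim\nu=\dim\sigma-1$. In particular $w(\tau,\sigma)w(\sigma,\nu)\ne0$.

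First, Lemma~\ref{lem:incidence} gives a cell $\widetilde{\sigma}\ne\sigma$ with $w(\tau,\widetilde{\sigma})\ne0$ and $w(\widetilde{\sigma},\nu)\ne0$. Now consider the value $f(\widetilde{\sigma})$ and compare it with $f(\tau)$ and $f(\nu)$. We have
\[
f(\tau)\le f(\sigma)\le f(\nu),
\]
so at least one of $f(\widetilde{\sigma})\ge f(\tau)$ and $f(\widetilde{\sigma})\le f(\nu)$ holds. Indeed, if both failed, then $f(\nu)<f(\widetilde{\sigma})<f(\tau)\le f(\nu)$, which is absurd.

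In the first case, both $\sigma$ and $\widetilde{\sigma}$ are cells $\alpha$ with $w(\tau,\alpha)\ne0$ and $f(\alpha)\ge f(\tau)$. Hence $D(\tau)\ge2$, contradicting the Morse condition at $\tau$. In the second case, both $\sigma$ and $\widetilde{\sigma}$ are cells $\alpha$ with $w(\alpha,\nu)\ne0$ and $f(\alpha)\le f(\nu)$. Hence $U(\nu)\ge2$, again a contradiction.

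The main point, beyond the short case analysis, is choosing to apply the bounds $U\le1$ and $D\le1$ at the neighbouring cells $\tau$ and $\nu$ rather than at $\sigma$ itself. Lemma~\ref{lem:incidence} supplies the second connecting cell needed to produce two distinct cells in the same count. No genericity of $f$ is needed, since the non-strict inequalities suffice for the dichotomy.
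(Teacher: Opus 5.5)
Your proof is correct and takes essentially the same approach as the paper: both obtain $\widetilde{\sigma}$ from Lemma~\ref{lem:incidence} and then apply the bounds $D(\tau)\le1$ and $U(\nu)\le1$ at the neighbouring cells. The only difference is presentation: the paper derives $f(\tau)>f(\widetilde{\sigma})>f(\nu)$ directly and closes the chain $f(\nu)\ge f(\sigma)\ge f(\tau)$, while you state the same contradiction as a dichotomy.
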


\begin{proof}
  Suppose that there is a cell $\sigma\in \K$ with $U(\sigma)=1$ and $D(\sigma)=1$. By the definition of $U(\sigma)$ and $D(\sigma)$, we have two cells $\tau,\nu\in \K$ with $w(\tau,\sigma)w(\sigma,\nu)\neq0$ and $f(\tau)\le f(\sigma)\le f(\nu)$. By Lemma~\ref{lem:incidence}, we have $\tilde{\sigma}\in \K$ such that $\sigma\neq\tilde{\sigma}$ and $w(\tau,\tilde\sigma)w(\tilde\sigma,\nu)\neq0$.

  Since $w(\tau,\sigma)\neq0$ and $f(\tau)\le f(\sigma)$, the cell $\sigma$ is counted in $D(\tau)$, so $D(\tau)\ge1$. Since $f$ is a discrete Morse function, $D(\tau)\le1$, and hence $D(\tau)=1$ with $\sigma$ as the unique cell counted in $D(\tau)$. Since $w(\tau,\tilde\sigma)\neq0$ and $\tilde\sigma\neq\sigma$, the cell $\tilde\sigma$ is not counted in $D(\tau)$, that is, $f(\tau)\le f(\tilde\sigma)$ fails; hence $f(\tau)>f(\tilde\sigma)$.

  Similarly, since $w(\sigma,\nu)\neq0$ and $f(\sigma)\le f(\nu)$, the cell $\sigma$ is counted in $U(\nu)$, so $U(\nu)\ge1$. Since $U(\nu)\le1$, we have $U(\nu)=1$ with $\sigma$ as the unique cell counted in $U(\nu)$. Since $w(\tilde\sigma,\nu)\neq0$ and $\tilde\sigma\neq\sigma$, the cell $\tilde\sigma$ is not counted in $U(\nu)$, that is, $f(\nu)\ge f(\tilde\sigma)$ fails; hence $f(\tilde\sigma)>f(\nu)$.

  Combining these, we obtain $f(\tau)>f(\tilde\sigma)>f(\nu)$. Therefore, we obtain the following inequality
  \[
  f(\tau) > f(\tilde\sigma) > f(\nu) \ge f(\sigma) \ge f(\tau),
  \]
  which is a contradiction.
\end{proof}

\begin{definition}
    Let $f\colon\K\to\R$ be a discrete Morse function. A cell $\sigma\in \K$ is \textbf{critical} for $f$ if $U(\sigma)+D(\sigma)=0$.
\end{definition}

\begin{proposition}\label{prp:gvfisdvf}
    Let $f\colon\K\to\R$ be a discrete Morse function. The set $-\nabla f$ is a discrete vector field on $\K$.
\end{proposition}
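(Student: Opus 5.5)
We have to check conditions (V1) and (V2) for $V=-\nabla f$. Condition (V1) needs no work: a pair $(\tau,\sigma)$ lies in $-\nabla f$ only when $w(\tau,\sigma)\neq0$.

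For (V2), take two distinct pairs $(\tau,\sigma),(\widetilde\tau,\widetilde\sigma)\in-\nabla f$ and suppose $\{\tau,\sigma\}\cap\{\widetilde\tau,\widetilde\sigma\}\neq\emptyset$. By (L1) we have $\dim\tau=\dim\sigma+1$ and $\dim\widetilde\tau=\dim\widetilde\sigma+1$. I will use the degree interpretation of Remark~\ref{rem:degree}: $U(\alpha)$ is the in-degree and $D(\alpha)$ the out-degree of a cell $\alpha$ in $-\nabla f$. There are four ways the two pairs can share a cell.

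\begin{enumerate}
\item If $\tau=\widetilde\tau$, then $\sigma\neq\widetilde\sigma$ because the pairs are distinct. So $\tau$ has two out-going pairs and $D(\tau)\ge2$, contradicting $D(\tau)\le1$.
\item If $\sigma=\widetilde\sigma$, then $\tau\neq\widetilde\tau$. So $\sigma$ has two in-coming pairs and $U(\sigma)\ge2$, contradicting $U(\sigma)\le1$.
\item If $\tau=\widetilde\sigma$, then $\tau$ is the second coordinate of $(\widetilde\tau,\tau)$, so $U(\tau)\ge1$. It is also the first coordinate of $(\tau,\sigma)$, so $D(\tau)\ge1$. Hence $U(\tau)+D(\tau)\ge2$, contradicting Lemma~\ref{lem:one}.
\item If $\sigma=\widetilde\tau$, the same argument applied to the cell $\sigma$ gives $U(\sigma)+D(\sigma)\ge2$, again contradicting Lemma~\ref{lem:one}.
\end{enumerate}

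These cases exhaust all possible coincidences, so the two pairs are disjoint and (V2) holds. The only substantive input is Lemma~\ref{lem:one}, which is already established and handles the mixed cases 3 and 4. I therefore expect no real obstacle; the only care needed is to confirm that distinctness of the pairs forces the remaining coordinates to differ in cases 1 and 2.
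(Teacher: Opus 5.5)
Your proposal is correct and follows essentially the same route as the paper: (V1) is immediate from the definition, and (V2) is a case analysis on the shared cell in which the mixed cases are ruled out by Lemma~\ref{lem:one}. The only difference is that you dispose of the cases $\tau=\widetilde\tau$ and $\sigma=\widetilde\sigma$ directly with $D\le1$ and $U\le1$, whereas the paper phrases all cases uniformly as $U(\alpha)+D(\alpha)\ge2$. Your remark about (L1) is not needed anywhere in the argument.
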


\begin{proof}
    For any pair $(\tau,\sigma)\in -\nabla f$, we have $w(\tau,\sigma)\neq0$ by the definition of $-\nabla f$. Therefore, condition (V1) holds.
    
    For (V2), note that $(\tau,\sigma)\in-\nabla f$ means precisely that $\sigma$ is counted
    in $D(\tau)$ and $\tau$ is counted in $U(\sigma)$. Let $(\tau,\sigma)$ and
    $(\widetilde\tau,\widetilde\sigma)$ be distinct pairs in $-\nabla f$ sharing a cell
    $\alpha$. If $\alpha=\tau=\widetilde\tau$, then $\sigma\neq\widetilde\sigma$ and both are
    counted in $D(\alpha)$; if $\alpha=\sigma=\widetilde\sigma$, then
    $\tau\neq\widetilde\tau$ and both are counted in $U(\alpha)$; otherwise $\alpha$ is the
    first coordinate of one pair and the second coordinate of the other, so $U(\alpha)\ge1$
    and $D(\alpha)\ge1$. In every case $U(\alpha)+D(\alpha)\ge2$, contradicting
    Lemma~\ref{lem:one}. Therefore, condition (V2) holds, and $-\nabla f$ is a discrete
    vector field on $\K$.
\end{proof}

\begin{remark}
    Let $f\colon\K\to\R$ be a discrete Morse function. A cell is a fixed point of $-\nabla f$ if and only if it is critical for $f$.
\end{remark}

\begin{example}
Figure~\ref{fig:gvf-example} illustrates how the gradient vector field
$-\nabla f$ is defined for a discrete Morse function on a Lefschetz complex.
The diagram on the left shows a Lefschetz complex $\K$, and the middle diagram
displays a discrete Morse function $f$ together with the pairs forming its
gradient vector field $-\nabla f$. In this example, the fixed points of $-\nabla f$
coincide with the critical cells of $f$.
The diagram on the right shows another discrete Morse function $f'$ on the same complex $\K$, for which $-\nabla f'$ is empty; in this case, every cell is a fixed point of $-\nabla f'$, and therefore every cell is critical for $f'$.
\end{example}

\begin{figure}[htbp]
\centering
\begin{tikzpicture}[scale=1]

    \begin{scope}[shift={(-4,0)}]
      \node (a) at (0,0) {$\nu_1$};
      \node (b) at (0,1.5) {$\sigma_1$};
      \node (c) at (0,3) {$\tau_1$};
      \node (d) at (2,0) {$\nu_2$};
      \node (e) at (2,1.5) {$\sigma_2$};
      \node (f) at (2,3) {$\tau_2$};

      \draw (a) -- (b) node[near end, left] {$1$};
      \draw (b) -- (c) node[near end, left] {$1$};
      \draw (d) -- (e) node[near end, right] {$-1$};
      \draw (e) -- (f) node[near end, right] {$-1$};
      \draw (a) -- (e) node[near end, above] {$1$};
      \draw (d) -- (b) node[near end, above] {$-1$};
      \draw (e) -- (c) node[near end, above] {$-1$};
      \draw (b) -- (f) node[near end, above] {$1$};

      \node at (1,-0.5) {$\K$: Lefschetz complex};
    \end{scope}

    \begin{scope}
      \node (a) at (0,0) {$1$};
      \node (b) at (0,1.5) {$2$};
      \node (c) at (0,3) {$3$};
      \node (d) at (2,0) {$4$};
      \node (e) at (2,1.5) {$5$};
      \node (f) at (2,3) {$6$};

      \draw (a) -- (b);
      \draw (b) -- (c);
      \draw (d) -- (e);
      \draw (e) -- (f);
      \draw (a) -- (e);
      \draw[->,red,thick] (d) -- (b);
      \draw[->,red,thick] (e) -- (c);
      \draw (b) -- (f);

      \node at (1,-0.5) {$-\nabla f=\{(\tau_1,\sigma_2),(\sigma_1,\nu_2)\}$};
    \end{scope}

    \begin{scope}[shift={(4,0)}]
      \node (a) at (0,0) {$1$};
      \node (b) at (0,1.5) {$2$};
      \node (c) at (0,3) {$3$};
      \node (d) at (2,0) {$1$};
      \node (e) at (2,1.5) {$2$};
      \node (f) at (2,3) {$3$};

      \draw (a) -- (b);
      \draw (b) -- (c);
      \draw (d) -- (e);
      \draw (e) -- (f);
      \draw (a) -- (e);
      \draw (d) -- (b);
      \draw (e) -- (c);
      \draw (b) -- (f);

      \node at (1,-0.5) {$-\nabla f'=\emptyset$};
    \end{scope}

\end{tikzpicture}

\caption{Three diagrams illustrating a Lefschetz complex $\K$, a discrete Morse
function $f$ with its gradient vector field $-\nabla f$, and another discrete
Morse function $f'$ on $\K$ for which $-\nabla f'$ is empty.}
\label{fig:gvf-example}
\end{figure}

To state the Morse inequality, we count the cells of $\K$ according to the values of
$U$ and $D$.

\begin{definition}
    Let $f\colon\K\to\R$ be a discrete Morse function. For each $k\in\Z_{\ge0}$, we introduce the following notation:
    \begin{align*}
        n_k(\K)&:=\rank C_k(\K)=\#\{\text{$k$-cells of $\K$}\}\\
        m_k(\K,f)&:=\#\{\text{critical $k$-cells of $\K$}\}=\#\{\sigma^k\in \K\mid U(\sigma)=D(\sigma)=0\}\\
        u_k(\K,f)&:=\#\{\sigma^k\in \K\mid U(\sigma)=1\}\\
        d_k(\K,f)&:=\#\{\sigma^k\in \K\mid D(\sigma)=1\}
    \end{align*}
\end{definition}

\begin{lemma}\label{lem:count}
    Let $f\colon\K\to\R$ be a discrete Morse function. For any $k\in\Z_{\ge0}$,
    \[
    n_k=m_k+d_k+u_k,\qquad u_k=d_{k+1}.
    \]
\end{lemma}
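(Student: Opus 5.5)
The first identity $n_k=m_k+d_k+u_k$ will follow from partitioning the $k$-cells of $\K$. By Lemma~\ref{lem:one}, every cell $\sigma$ satisfies $U(\sigma)+D(\sigma)\le1$. So each $k$-cell lies in exactly one of three disjoint sets:
\begin{itemize}
  \item the cells with $U(\sigma)=D(\sigma)=0$, which are the critical cells counted by $m_k$;
  \item the cells with $D(\sigma)=1$ (and hence $U(\sigma)=0$), counted by $d_k$;
  \item the cells with $U(\sigma)=1$ (and hence $D(\sigma)=0$), counted by $u_k$.
\end{itemize}
These sets cover all $k$-cells, since $U,D\in\{0,1\}$ by the definition of a discrete Morse function. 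Lemma~\ref{lem:one} excludes the remaining combination $U=D=1$, which is exactly what makes the sets disjoint. The total number of $k$-cells is $n_k$, so the first identity follows.

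For $u_k=d_{k+1}$, I would build a bijection between the $k$-cells with $U=1$ and the $(k+1)$-cells with $D=1$, using the pairs of the gradient vector field $-\nabla f$. By Remark~\ref{rem:degree}, $U(\sigma)$ is the number of pairs of $-\nabla f$ whose second coordinate is $\sigma$. Likewise $D(\tau)$ is the number of pairs whose first coordinate is $\tau$. By (L1), every pair $(\tau,\sigma)\in-\nabla f$ satisfies $\dim\tau=\dim\sigma+1$.

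So the pairs of $-\nabla f$ with $\dim\sigma=k$ are exactly the pairs with $\dim\tau=k+1$. I would count this finite set $P_k$ in two ways:
\begin{itemize}
  \item by second coordinate, $\#P_k=\sum_{\sigma^k}U(\sigma)=u_k$, since each $U(\sigma)\in\{0,1\}$;
  \item by first coordinate, $\#P_k=\sum_{\tau^{k+1}}D(\tau)=d_{k+1}$, for the same reason.
\end{itemize}
Equivalently, Proposition~\ref{prp:gvfisdvf} says that each cell appears in at most one pair, so the maps $(\tau,\sigma)\mapsto\sigma$ and $(\tau,\sigma)\mapsto\tau$ are bijections from $P_k$ onto the two sets being counted.

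None of these steps presents a real obstacle; the one nontrivial input, the exclusion of $U=D=1$, is already supplied by Lemma~\ref{lem:one}. The only care needed is at the boundary: for $k=0$, note that $D$ vanishes on $0$-cells, so the identities still hold. Finiteness of each $\dim^{-1}(k)$ keeps all the counts finite.
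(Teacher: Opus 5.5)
Your proof is correct and follows the paper's argument. The first identity comes from partitioning the $k$-cells via Lemma~\ref{lem:one}, and the second from the correspondence between $k$-cells with $U=1$ and $(k+1)$-cells with $D=1$ through the pairs of $-\nabla f$. Your double count of $P_k$ is a slightly leaner version of the paper's bijection, since it uses only $U,D\le1$ and not condition (V2) from Proposition~\ref{prp:gvfisdvf}.
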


\begin{proof}
    By Lemma~\ref{lem:one}, every $k$-cell $\sigma$ satisfies exactly one of
    $U(\sigma)=D(\sigma)=0$, $U(\sigma)=1$ and $D(\sigma)=1$, which gives the first equality.
    If $U(\sigma)=1$ for a $k$-cell $\sigma$, there is a unique cell $\tau$ with
    $(\tau,\sigma)\in-\nabla f$, and $\tau$ is a $(k+1)$-cell with $D(\tau)=1$. By (V2) for
    $-\nabla f$, which is a discrete vector field by Proposition~\ref{prp:gvfisdvf}, the
    assignment $\sigma\mapsto\tau$ is injective, and it is surjective onto the set of
    $(k+1)$-cells $\tau$ with $D(\tau)=1$. This gives the second equality.
\end{proof}

For a discrete Morse function $f\colon\K\to\R$, we define the formal power series
$r_t(\K,f)$ with integer coefficients by
\[
r_t(\K,f):=\sum_{k=1}^\infty \bigl(\rank B_{k-1}(\K) - d_k(\K,f) \bigr)t^{k-1}.
\]

\begin{lemma}\label{lem:poly}
    Let $f\colon\K\to\R$ be a discrete Morse function. Then
    \[
    \sum_{k=0}^\infty m_k t^k=P_t(\K)+(1+t)r_t(\K,f).
    \]
\end{lemma}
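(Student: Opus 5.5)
The plan is to compare coefficients of $t^k$ on both sides, using Lemma~\ref{lem:count} and Lemma~\ref{lem:rankid}; this is a formal power series identity, so it suffices to check it coefficientwise. The coefficient of $t^k$ on the right-hand side is
\[
b_k^\K+\bigl(\rank B_k(\K)-d_{k+1}\bigr)+\bigl(\rank B_{k-1}(\K)-d_k\bigr),
\]
where the second bracket comes from $t\cdot t^{k-1}$ and is interpreted as $0$ when $k=0$. This convention is consistent, since $\rank B_{-1}(\K)=0$ and $d_0=0$ (a $0$-cell has no lower cells, so $D(\sigma)=0$). Note also that $\sum_k m_kt^k$ is a well-defined formal series because each $\dim^{-1}(k)$ is finite.

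The key steps are as follows. First, by Lemma~\ref{lem:rankid}, $b_k^\K+\rank B_k(\K)+\rank B_{k-1}(\K)=n_k$. Hence the coefficient above equals $n_k-d_{k+1}-d_k$. Second, by Lemma~\ref{lem:count}, $d_{k+1}=u_k$, so this is $n_k-u_k-d_k$. Third, again by Lemma~\ref{lem:count}, $n_k-u_k-d_k=m_k$, which is exactly the coefficient of $t^k$ on the left.

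The only point requiring care is the index bookkeeping. We must check that $(1+t)r_t$ contributes $\rank B_k-d_{k+1}$ (from the summand with index $k+1$, without the factor $t$) and $\rank B_{k-1}-d_k$ (from the summand with index $k$, multiplied by $t$). We must also confirm the boundary case $k=0$ separately: there the coefficient is $b_0+\rank B_0-d_1=n_0-u_0=m_0+d_0=m_0$. No genuine obstacle is expected, since the statement is purely a rank count; nonnegativity of $r_t$ is not claimed here.
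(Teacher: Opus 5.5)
Your proposal is correct and follows the paper's proof essentially verbatim. The paper also combines Lemma~\ref{lem:rankid} with $n_k=m_k+d_k+u_k$ and $u_k=d_{k+1}$ from Lemma~\ref{lem:count} to write $m_k t^k=b_k t^k+(\rank B_k(\K)-d_{k+1})t^k+(\rank B_{k-1}(\K)-d_k)t^k$, handles $k=0$ via $B_{-1}(\K)=\{0\}$ and $d_0=0$, and then sums and reindexes; your coefficientwise bookkeeping is the same computation read in the opposite direction.
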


\begin{proof}
    Fix $k\in\Z_{\ge0}$. By Lemma~\ref{lem:rankid},
    \[
    n_k=\rank C_k(\K)
    =\rank B_k(\K)+\rank B_{k-1}(\K)+b_k.
    \]
    Since $n_k=m_k+d_k+u_k$ and $u_k=d_{k+1}$ by Lemma~\ref{lem:count}, we obtain
    \begin{align*}
    m_k t^k
    &=b_k t^k+\bigl(\rank B_k(\K)-d_{k+1}\bigr)t^k\\
    &\quad+\bigl(\rank B_{k-1}(\K)-d_k\bigr)t^k
    \end{align*}
    for every $k\ge0$. At $k=0$, the last term vanishes, since
    $B_{-1}(\K)=\{0\}$. Moreover, as $\dim$ takes values in $\Z_{\ge0}$,
    condition~(L1) forces $d_0=0$. Summing over $k\in\Z_{\ge0}$ and reindexing,
    we obtain
    \[
    \sum_{k=0}^\infty m_k t^k
    =P_t(\K)+r_t(\K,f)+t\,r_t(\K,f)
    =P_t(\K)+(1+t)r_t(\K,f).
    \]
\end{proof}

\section{Discrete Morse-Bott functions}\label{sec:MBfunctions}

In this section, we introduce discrete Morse-Bott functions on Lefschetz complexes,
generalizing discrete Morse functions. The weakly critical sets of such a function
assemble into a Lefschetz complex $\W(\K,f)$, which plays the role that the set of
critical cells plays in the Morse case, and we establish an identity for the
Poincar\'{e} series of $\K$. The nonnegativity of the remainder term is not proved here;
it is obtained in Section~\ref{sec:moves} as Theorem~\ref{thm:MB}, using the Moves
introduced there.

\begin{definition}[Discrete Morse-Bott function]
    A function $f\colon\K\to\R$ is a \textbf{discrete Morse-Bott function} on $\K$ if for any cell $\sigma\in \K$, $U^\snc(\sigma)\le1$ and $D^\snc(\sigma)\le 1$.
\end{definition}

As in the case of discrete Morse functions, $U^\snc(\sigma)$ and $D^\snc(\sigma)$ cannot
both be equal to $1$.

\begin{lemma}\label{lem:MBone}
  For any discrete Morse-Bott function $f\colon\K\to\R$ and for any cell $\sigma\in\K$, we have the following inequality
  \[
  U^\snc(\sigma)+D^\snc(\sigma)\le 1.
  \]
\end{lemma}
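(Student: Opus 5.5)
We argue by contradiction, exactly mirroring the proof of Lemma~\ref{lem:one}, but now with strict inequalities. Suppose some cell $\sigma\in\K$ has $U^\snc(\sigma)=1$ and $D^\snc(\sigma)=1$. Then there are cells $\tau,\nu$ with $w(\tau,\sigma)w(\sigma,\nu)\neq0$ and
\[
f(\tau)<f(\sigma)<f(\nu).
\]
By Lemma~\ref{lem:incidence}, there is a cell $\tilde\sigma\neq\sigma$ with $w(\tau,\tilde\sigma)w(\tilde\sigma,\nu)\neq0$. Our aim is to derive a chain of inequalities on $f(\tilde\sigma)$ that closes into a contradiction.

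First we look at $\tau$. Since $w(\tau,\sigma)\neq0$ and $f(\tau)<f(\sigma)$, the cell $\sigma$ is counted in $D^\snc(\tau)$. Because $D^\snc(\tau)\le1$, $\sigma$ is the unique such cell. Since $\tilde\sigma\neq\sigma$ and $w(\tau,\tilde\sigma)\neq0$, the cell $\tilde\sigma$ is not counted in $D^\snc(\tau)$, so $f(\tilde\sigma)\le f(\tau)$.

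Next we look at $\nu$ symmetrically. The cell $\sigma$ is counted in $U^\snc(\nu)$, because $w(\sigma,\nu)\neq0$ and $f(\nu)>f(\sigma)$. Uniqueness, from $U^\snc(\nu)\le1$, excludes $\tilde\sigma$, which gives $f(\tilde\sigma)\ge f(\nu)$.

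Combining these yields
\[
f(\nu)\le f(\tilde\sigma)\le f(\tau)<f(\sigma)<f(\nu),
\]
a contradiction. The only point requiring care is that the exclusion of $\tilde\sigma$ from the counts now produces non-strict inequalities, whereas in Lemma~\ref{lem:one} they were strict. The strictness needed for the contradiction comes instead from the hypotheses $f(\tau)<f(\sigma)<f(\nu)$. So the argument goes through with the roles of strict and non-strict inequalities swapped, and no real obstacle remains.
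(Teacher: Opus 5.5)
Your proof is correct and matches the paper's argument: you use Lemma~\ref{lem:incidence} to produce $\tilde\sigma$, then exclude it from $D^\snc(\tau)$ and from $U^\snc(\nu)$ to get $f(\nu)\le f(\tilde\sigma)\le f(\tau)$. Combined with the strict inequalities $f(\tau)<f(\sigma)<f(\nu)$, this gives the same contradiction the paper derives.
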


\begin{proof}
    Suppose that there is a cell $\sigma\in \K$ with $U^\snc(\sigma)=1$ and $D^\snc(\sigma)=1$. By the definition of $U^\snc(\sigma)$ and $D^\snc(\sigma)$, we have two cells $\tau,\nu\in \K$ with $w(\tau,\sigma)w(\sigma,\nu)\neq0$ and $f(\tau)< f(\sigma)< f(\nu)$. By Lemma~\ref{lem:incidence}, we have $\tilde{\sigma}\in \K$ such that $\sigma\neq\tilde{\sigma}$ and $w(\tau,\tilde\sigma)w(\tilde\sigma,\nu)\neq0$.

    The same argument as in the proof of Lemma~\ref{lem:one}, with $D^\snc(\tau)\le1$ and $U^\snc(\nu)\le1$ in place of $D(\tau)\le1$ and $U(\nu)\le1$, shows that $\tilde\sigma$ is counted neither in $D^\snc(\tau)$ nor in $U^\snc(\nu)$. Since the inequalities in the definitions of $U^\snc$ and $D^\snc$ are strict, negating them gives $f(\tau)\ge f(\tilde\sigma)$ and $f(\tilde\sigma)\ge f(\nu)$. Therefore, we obtain the following inequality
  \[
  f(\tau) \ge f(\tilde\sigma) \ge f(\nu) > f(\sigma) > f(\tau),
  \]
  which is a contradiction.
\end{proof}

\begin{definition}
    Let $f\colon\K\to\R$ be a discrete Morse-Bott function. A cell $\sigma\in \K$ is \textbf{weakly critical} for $f$ if $U^\snc(\sigma)+D^\snc(\sigma)=0$. Equivalently, there is a weakly critical set $C\in\C_f$ such that $\sigma\in C$.
\end{definition}

\begin{proposition}\label{prp:MisMB}
    Let $f\colon\K\to\R$ be a discrete Morse function. Then $f$ is a discrete Morse-Bott function, and every critical cell of $f$ is weakly critical.
\end{proposition}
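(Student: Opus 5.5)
The statement reduces to comparing the non-strict counts $U,D$ with their strict counterparts $U^\snc,D^\snc$. We will use only the inequalities $U^\snc(\sigma)\le U(\sigma)$ and $D^\snc(\sigma)\le D(\sigma)$, recorded right after the definition of $U^\snc$ and $D^\snc$. These hold because each set defining $U^\snc(\sigma)$ or $D^\snc(\sigma)$ is contained in the corresponding set defining $U(\sigma)$ or $D(\sigma)$: a strict inequality of $f$-values implies the non-strict one.

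First, let $f$ be a discrete Morse function. For every cell $\sigma\in\K$ we then have
\[
U^\snc(\sigma)\le U(\sigma)\le 1,\qquad D^\snc(\sigma)\le D(\sigma)\le 1.
\]
This is exactly the defining condition of a discrete Morse-Bott function, so $f$ is one.

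Second, let $\sigma$ be critical for $f$, that is, $U(\sigma)+D(\sigma)=0$. Then
\[
0\le U^\snc(\sigma)+D^\snc(\sigma)\le U(\sigma)+D(\sigma)=0,
\]
so $U^\snc(\sigma)+D^\snc(\sigma)=0$. Since $f$ is a discrete Morse-Bott function by the first step, this means $\sigma$ is weakly critical by definition.

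Equivalently, $\sigma$ lies in the weakly critical set of the level set $\{\alpha\mid f(\alpha)=f(\sigma)\}$, which is nonempty because it contains $\sigma$ and hence belongs to $\mathcal{C}_f$. There is no real obstacle here. The only thing to watch is the order of the argument: the first step must come first, since weak criticality is defined only for discrete Morse-Bott functions.
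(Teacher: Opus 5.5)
Your proof is correct and is essentially the paper's argument. Both use $U^{\mathrm{snc}}(\sigma)\le U(\sigma)\le 1$ and $D^{\mathrm{snc}}(\sigma)\le D(\sigma)\le 1$ to get the Morse-Bott condition, and then squeeze the strict counts to zero at a critical cell.
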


\begin{proof}
    For any cell $\sigma\in \K$, we have $U^\snc(\sigma)\le U(\sigma)\le1$ and $D^\snc(\sigma)\le D(\sigma)\le1$, so $f$ is a discrete Morse-Bott function. If $\sigma$ is critical for $f$, then $U^\snc(\sigma)\le U(\sigma)=0$ and $D^\snc(\sigma)\le D(\sigma)=0$, so $\sigma$ is weakly critical.
\end{proof}

Since $U^\snc$ and $D^\snc$ are defined by strict inequalities, in the Morse-Bott setting
it is the strict gradient vector field that is a discrete vector field.

\begin{proposition}\label{prp:gvfisdvfMB}
    Let $f\colon\K\to\R$ be a discrete Morse-Bott function. The set $-\nabla_s f$ is a discrete vector field on $\K$.
\end{proposition}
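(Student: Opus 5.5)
The plan is to mirror the proof of Proposition~\ref{prp:gvfisdvf}, with $-\nabla_s f$ in place of $-\nabla f$, $U^\snc$ and $D^\snc$ in place of $U$ and $D$, and Lemma~\ref{lem:MBone} in place of Lemma~\ref{lem:one}.

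First, condition (V1) is immediate. By definition, every pair $(\tau,\sigma)\in-\nabla_s f$ satisfies $w(\tau,\sigma)\neq0$.

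For (V2), I will use Remark~\ref{rem:degree}. A pair $(\tau,\sigma)$ lies in $-\nabla_s f$ exactly when $\sigma$ is counted in $D^\snc(\tau)$ and $\tau$ is counted in $U^\snc(\sigma)$. Suppose two distinct pairs $(\tau,\sigma)$ and $(\widetilde\tau,\widetilde\sigma)$ in $-\nabla_s f$ share a cell $\alpha$. I split into three cases.
\begin{enumerate}[(i)]
\item If $\alpha=\tau=\widetilde\tau$, then $\sigma\neq\widetilde\sigma$. Both are counted in $D^\snc(\alpha)$, so $D^\snc(\alpha)\ge2$.
\item If $\alpha=\sigma=\widetilde\sigma$, then $\tau\neq\widetilde\tau$. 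Symmetrically, $U^\snc(\alpha)\ge2$.
\item Otherwise, $\alpha$ is the first coordinate of one pair and the second coordinate of the other. Then $D^\snc(\alpha)\ge1$ and $U^\snc(\alpha)\ge1$.
\end{enumerate}
In every case $U^\snc(\alpha)+D^\snc(\alpha)\ge2$, which contradicts Lemma~\ref{lem:MBone}. Cases (i) and (ii) also contradict the defining bounds $U^\snc\le1$ and $D^\snc\le1$ directly.

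I do not expect a real obstacle. The only substantive input is excluding case (iii), and that is exactly what Lemma~\ref{lem:MBone} provides through Lemma~\ref{lem:incidence}. One point to state explicitly is why case (iii) cannot have $\tau=\widetilde\sigma$ and $\sigma=\widetilde\tau$ at the same time. This is ruled out by (L1), which forces a dimension drop of one along each pair, so such a cycle of length two is impossible. This does not even matter, though, because the degree count $U^\snc(\alpha),D^\snc(\alpha)\ge1$ already gives the contradiction.
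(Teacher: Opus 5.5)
Your proposal is correct and follows the paper's own route. The paper's proof simply states that the proof of Proposition~\ref{prp:gvfisdvf} carries over verbatim with $U^\snc$, $D^\snc$, $-\nabla_s f$ and Lemma~\ref{lem:MBone} in place of $U$, $D$, $-\nabla f$ and Lemma~\ref{lem:one}, and your three-case degree argument is exactly that transcription written out in full.
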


\begin{proof}
    Replacing $U$, $D$, $-\nabla f$ and Lemma~\ref{lem:one} by $U^\snc$, $D^\snc$,
    $-\nabla_s f$ and Lemma~\ref{lem:MBone} throughout, the proof of
    Proposition~\ref{prp:gvfisdvf} applies verbatim.
\end{proof}

If $f$ is a discrete Morse-Bott function and $(\tau,\sigma)\in -\nabla_s f$, then we write $\sigma\to_f\tau$.

In the Morse-Bott setting, the role of a single critical cell is played by a whole weakly
critical set. We first show that each such set is a subcomplex, so that their disjoint
union is again a Lefschetz complex.

\begin{proposition}\label{prp:wcsubcpx}
    Let $f\colon\K\to\R$ be a discrete Morse-Bott function. A weakly critical set is a subcomplex of $\K$.
\end{proposition}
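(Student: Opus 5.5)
By Proposition~\ref{prp:subcpx}, it suffices to check condition (L2) for the triple $(C,\dim|_C,w|_{C\times C})$, where $C$ is a weakly critical set. All its cells share a common value $r$ of $f$. Fix $\tau,\nu\in C$. We must show
\[
\sum_{\sigma\in C}w(\tau,\sigma)w(\sigma,\nu)=0.
\]
Our plan is to prove that every cell $\sigma\in\K$ with $w(\tau,\sigma)w(\sigma,\nu)\neq0$ already lies in $C$. Then the restricted sum equals the full sum $\sum_{\sigma\in\K}w(\tau,\sigma)w(\sigma,\nu)$, which vanishes by (L2) for $\K$.

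So let $\sigma\in\K$ satisfy $w(\tau,\sigma)\neq0$ and $w(\sigma,\nu)\neq0$. We first pin down $f(\sigma)$.

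Since $\tau$ is weakly critical, $D^\snc(\tau)=0$. Because $w(\tau,\sigma)\neq0$, the cell $\sigma$ is not counted in $D^\snc(\tau)$, so $f(\tau)<f(\sigma)$ fails, that is, $f(\sigma)\le r$.

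Since $\nu$ is weakly critical, $U^\snc(\nu)=0$. Because $w(\sigma,\nu)\neq0$, the cell $\sigma$ is not counted in $U^\snc(\nu)$, so $f(\nu)>f(\sigma)$ fails, that is, $f(\sigma)\ge r$.

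Hence $f(\sigma)=r$, and $\sigma$ lies in the level set $L$ containing $C$.

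It remains to show that $\sigma$ is weakly critical, i.e.\ $U^\snc(\sigma)=D^\snc(\sigma)=0$. This is the main step, because a priori $\sigma$ could have a strict gradient arrow to or from some cell outside $C$. Suppose $U^\snc(\sigma)\ge1$, so there is a cell $\alpha$ with $w(\alpha,\sigma)\ne0$ and $f(\alpha)<r$. I do not expect to contradict the weak criticality of $\tau$ or $\nu$ directly here. Instead, I would use Lemma~\ref{lem:MBone} together with Lemma~\ref{lem:incidence} on the chain $\tau\to\sigma\to\nu$.

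By Lemma~\ref{lem:incidence} applied to $w(\tau,\sigma)w(\sigma,\nu)\ne0$, there is some $\widetilde\sigma\ne\sigma$ with $w(\tau,\widetilde\sigma)w(\widetilde\sigma,\nu)\ne0$. By the previous paragraph, $f(\widetilde\sigma)=r$ as well. However, this alone does not control $\alpha$, since $\alpha$ has the same dimension as $\tau$ and is unrelated to it.

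So I would instead try the chain $\alpha\to\sigma\to\nu$ with $w(\alpha,\sigma)w(\sigma,\nu)\neq0$. Lemma~\ref{lem:incidence} gives some $\beta\neq\sigma$ with $w(\alpha,\beta)w(\beta,\nu)\ne0$. Now compare values:
\begin{itemize}
\item the pair $(\alpha,\sigma)$ already satisfies $f(\alpha)<f(\sigma)$, so $\sigma$ is counted in $D^\snc(\alpha)$, and the Morse-Bott bound $D^\snc(\alpha)\le1$ forces $f(\beta)\le f(\alpha)<r$;
\item on the other hand, $U^\snc(\nu)=0$ together with $w(\beta,\nu)\ne0$ gives $f(\beta)\ge r$.
\end{itemize}
These contradict each other, so $U^\snc(\sigma)=0$.

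The case $D^\snc(\sigma)\ge1$ is symmetric. Take $\gamma$ with $w(\sigma,\gamma)\neq0$ and $f(\gamma)>r$, and apply Lemma~\ref{lem:incidence} to the chain $\tau\to\sigma\to\gamma$, giving $\beta\ne\sigma$ with $w(\tau,\beta)w(\beta,\gamma)\ne0$. Then $U^\snc(\gamma)\le1$, with $\sigma$ counted in $U^\snc(\gamma)$, forces $f(\beta)\ge f(\gamma)>r$. Meanwhile $D^\snc(\tau)=0$ together with $w(\tau,\beta)\ne0$ forces $f(\beta)\le r$, a contradiction.

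Thus $\sigma\in C$, the restricted sum equals the full sum, and (L2) holds for $C$. By Proposition~\ref{prp:subcpx}, $C$ is a subcomplex of $\K$.
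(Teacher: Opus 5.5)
Your proof is correct and follows the paper's argument. As in the paper, you first pin $f(\sigma)$ to the common level using $D^\snc(\tau)=0$ and $U^\snc(\nu)=0$, then rule out each strict arrow at $\sigma$ by applying Lemma~\ref{lem:incidence} to the chain through the offending neighbor and playing the Morse-Bott bound at that neighbor against the weak criticality of $\nu$ (resp.\ $\tau$); the paragraph using $\widetilde\sigma$ from the chain $\tau,\sigma,\nu$ is a harmless false start and can be deleted.
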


\begin{proof}
    Let $C$ be a weakly critical set and $\tau,\nu\in C$. By definition, $f$ is constant on
    $C$ and $U^\snc=D^\snc=0$ on $C$. By Proposition~\ref{prp:subcpx}
    it suffices to verify (L2) for $C$, and since (L2) holds for $\K$, it is enough to show
    $w(\tau,\sigma)w(\sigma,\nu)=0$ for every $\sigma\in\K\setminus C$.

    Suppose $w(\tau,\sigma)w(\sigma,\nu)\neq0$ for some $\sigma\in\K\setminus C$. Since
    $D^\snc(\tau)=0$ and $U^\snc(\nu)=0$, we have $f(\nu)\le f(\sigma)\le f(\tau)=f(\nu)$, so
    $\sigma$ lies on the same level set as $C$. As $\sigma\notin C$, the cell $\sigma$ is
    not weakly critical, so $\beta\to_f\sigma$ or $\sigma\to_f\beta$ for some $\beta\in\K$.

    Assume $\beta\to_f\sigma$, that is, $w(\sigma,\beta)\neq0$ and $f(\sigma)<f(\beta)$. By
    Lemma~\ref{lem:incidence} there is a cell $\widetilde\sigma\neq\sigma$ with
    $w(\tau,\widetilde\sigma)w(\widetilde\sigma,\beta)\neq0$. The cell $\sigma$ is counted
    in $U^\snc(\beta)$, hence, since $U^\snc(\beta)\le1$, it is the only cell counted in it,
    and therefore $f(\beta)\le f(\widetilde\sigma)$. Since $D^\snc(\tau)=0$, we also have
    $f(\widetilde\sigma)\le f(\tau)$. Hence
    \[
    f(\nu)\le f(\sigma)<f(\beta)\le f(\widetilde{\sigma})\le f(\tau)=f(\nu),
    \]
    which is a contradiction. The case $\sigma\to_f\beta$ is proved in the same way, using
    $D^\snc(\beta)\le1$ and $U^\snc(\nu)=0$.
\end{proof}

For a discrete Morse-Bott function $f\colon\K\to\R$, each $C\in\C_f$ is a subcomplex of
$\K$ by Proposition~\ref{prp:wcsubcpx}, and every cell of such a $C$ is a cell of $\K$, so
the finiteness assumption of Lemma~\ref{lem:disjoint} is satisfied and the disjoint union
below is a Lefschetz complex.

\begin{definition}\label{def:wccpx}
    Let $f\colon\K\to\R$ be a discrete Morse-Bott function. The \textbf{weakly critical
    complex} of $(\K,f)$ is the Lefschetz complex
    \[
    \W(\K,f):=\bigsqcup_{C\in\C_f}\bigl(C,\dim|_C,w|_{C\times C}\bigr),
    \]
    and we write $\W$ when $\K$ and $f$ are clear from the context. Its cells are the
    weakly critical cells of $f$, and by Remark~\ref{rem:wcunique} its incidence function
    $w^\W$ is given by
    \[
    w^\W(\tau,\sigma)=
    \begin{cases}
        w(\tau,\sigma) & \text{if } f(\tau)=f(\sigma),\\
        0 & \text{otherwise}.
    \end{cases}
    \]
\end{definition}

To state the Morse-Bott inequality, we count the cells of $\K$ according to the values of
$U^\snc$ and $D^\snc$.

\begin{definition}
    Let $f\colon\K\to\R$ be a discrete Morse-Bott function. For each $k\in\Z_{\ge0}$, we introduce the following notation:
    \begin{align*}
        m_k^\snc(\K,f)
        &:=\#\{\text{weakly critical $k$-cells of $\K$}\}\\
        &=\#\{\sigma^k\in \K\mid U^\snc(\sigma)=D^\snc(\sigma)=0\}\\
        u_k^\snc(\K,f)&:=\#\{\sigma^k\in \K\mid U^\snc(\sigma)=1\}\\
        d_k^\snc(\K,f)&:=\#\{\sigma^k\in \K\mid D^\snc(\sigma)=1\}
    \end{align*}
\end{definition}

\begin{remark}\label{rem:wccpx}
    By Definition~\ref{def:wccpx}, $\rank C_k(\W)=m_k^\snc$ for every $k$, and
    Lemma~\ref{lem:disjointsum} gives
    \[
    \rank B_k(\W)=\sum_{C\in\C_f}\rank B_k(C),\qquad
    P_t(\W)=\sum_{C\in\C_f}P_t(C).
    \]
\end{remark}

\begin{lemma}\label{lem:countMB}
    Let $f\colon\K\to\R$ be a discrete Morse-Bott function. For any $k\in\Z_{\ge0}$,
    \[
    n_k=m_k^\snc+d_k^\snc+u_k^\snc,\qquad u_k^\snc=d_{k+1}^\snc.
    \]
\end{lemma}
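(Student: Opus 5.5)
The plan is to mirror the proof of Lemma~\ref{lem:count}, with Lemma~\ref{lem:MBone} playing the role of Lemma~\ref{lem:one} and Proposition~\ref{prp:gvfisdvfMB} playing the role of Proposition~\ref{prp:gvfisdvf}.

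For the first equality, fix $k$ and consider a $k$-cell $\sigma$. By Lemma~\ref{lem:MBone}, $U^\snc(\sigma)+D^\snc(\sigma)\le1$. Together with $U^\snc,D^\snc\ge0$, this means exactly one of the following holds:
\begin{enumerate}[(a)]
    \item $U^\snc(\sigma)=D^\snc(\sigma)=0$, so $\sigma$ is weakly critical;
    \item $U^\snc(\sigma)=1$ and $D^\snc(\sigma)=0$;
    \item $D^\snc(\sigma)=1$ and $U^\snc(\sigma)=0$.
\end{enumerate}
The three sets of $k$-cells are therefore disjoint and cover all $k$-cells. Counting them gives $n_k=m_k^\snc+u_k^\snc+d_k^\snc$.

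For the second equality, let $\sigma$ be a $k$-cell with $U^\snc(\sigma)=1$. Then there is a unique $\tau$ with $w(\tau,\sigma)\ne0$ and $f(\tau)<f(\sigma)$, that is, $(\tau,\sigma)\in-\nabla_s f$. By (L1), $\tau$ is a $(k+1)$-cell, and $\sigma$ is counted in $D^\snc(\tau)$, so $D^\snc(\tau)\ge1$. Since $f$ is Morse-Bott, $D^\snc(\tau)=1$. This defines a map
\[
\{\sigma^k\mid U^\snc(\sigma)=1\}\to\{\tau^{k+1}\mid D^\snc(\tau)=1\},\qquad \sigma\mapsto\tau.
\]

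Injectivity comes from (V2) for $-\nabla_s f$, which is a discrete vector field by Proposition~\ref{prp:gvfisdvfMB}. If $\sigma\ne\sigma'$ had the same image $\tau$, the distinct pairs $(\tau,\sigma)$ and $(\tau,\sigma')$ of $-\nabla_s f$ would share the cell $\tau$, which (V2) forbids. Equivalently, both $\sigma$ and $\sigma'$ would be counted in $D^\snc(\tau)\le1$.

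Surjectivity is symmetric. If $D^\snc(\tau)=1$ for a $(k+1)$-cell $\tau$, there is a $k$-cell $\nu$ with $w(\tau,\nu)\ne0$ and $f(\tau)<f(\nu)$. Then $\tau$ is counted in $U^\snc(\nu)$, so $U^\snc(\nu)=1$, and $\nu$ maps to $\tau$. Hence $u_k^\snc=d_{k+1}^\snc$.

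The only step that needs care is making sure the trichotomy is exhaustive and exclusive, and that relies entirely on Lemma~\ref{lem:MBone}. The rest is direct bookkeeping. Finiteness of the counts is automatic because $\dim^{-1}(k)$ is finite.
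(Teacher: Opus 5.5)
Your proposal is correct and follows the paper's approach: the paper proves this lemma by rerunning the proof of Lemma~\ref{lem:count} with $U^\snc$, $D^\snc$, $-\nabla_s f$, Lemma~\ref{lem:MBone} and Proposition~\ref{prp:gvfisdvfMB} in place of their Morse counterparts. You have written that substitution out in full, namely the trichotomy for the first identity and the bijection $\sigma\mapsto\tau$ for the second.
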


\begin{proof}
    The proof is the same as that of Lemma~\ref{lem:count}, using $U^\snc$, $D^\snc$ and
    $-\nabla_s f$ in place of $U$, $D$ and $-\nabla f$, and citing Lemma~\ref{lem:MBone}
    and Proposition~\ref{prp:gvfisdvfMB} in place of Lemma~\ref{lem:one} and
    Proposition~\ref{prp:gvfisdvf}.
\end{proof}

For a discrete Morse-Bott function $f\colon\K\to\R$, we define the formal power series
$R_t(\K,f)$ with integer coefficients by
\[
R_t(\K,f):=\sum_{k=1}^\infty \Bigl(\rank B_{k-1}(\K) - d_k^\snc(\K,f) - \rank B_{k-1}(\W(\K,f))\Bigr)t^{k-1}.
\]
For $k\in\Z_{\ge0}$, we write $R_t(\K,f)_k$ for the coefficient of $t^k$ in $R_t(\K,f)$.

\begin{lemma}\label{lem:MBpoly}
    Let $f\colon\K\to\R$ be a discrete Morse-Bott function. Then
    \[
    \sum_{C\in\C_f}P_t(C)=P_t(\K)+(1+t)R_t(\K,f).
    \]
\end{lemma}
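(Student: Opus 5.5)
The plan is to mirror the proof of Lemma~\ref{lem:poly}, applying Lemma~\ref{lem:rankid} twice: once to $\K$ and once to the weakly critical complex $\W=\W(\K,f)$. Fix $k\in\Z_{\ge0}$. For $\K$, Lemma~\ref{lem:rankid} gives
\[
n_k=\rank B_k(\K)+\rank B_{k-1}(\K)+b_k^\K .
\]
For $\W$, Remark~\ref{rem:wccpx} gives $\rank C_k(\W)=m_k^\snc$. Lemma~\ref{lem:rankid} applied to $\W$ then yields
\[
m_k^\snc=\rank B_k(\W)+\rank B_{k-1}(\W)+b_k^\W .
\]
By Remark~\ref{rem:wccpx} we also have $b_k^\W$ equal to the coefficient of $t^k$ in $\sum_{C\in\C_f}P_t(C)$.

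Next, I use Lemma~\ref{lem:countMB}: $n_k=m_k^\snc+d_k^\snc+u_k^\snc$ and $u_k^\snc=d_{k+1}^\snc$. Substituting the expression for $m_k^\snc$ into the first identity and solving for $b_k^\W$ gives
\begin{align*}
b_k^\W&=b_k^\K+\bigl(\rank B_k(\K)-d_{k+1}^\snc-\rank B_k(\W)\bigr)\\
&\quad+\bigl(\rank B_{k-1}(\K)-d_k^\snc-\rank B_{k-1}(\W)\bigr).
\end{align*}
The first bracket is the coefficient $R_t(\K,f)_k$, and the second is $R_t(\K,f)_{k-1}$ when $k\ge1$.

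At $k=0$ the second bracket must vanish. This holds because $B_{-1}(\K)=B_{-1}(\W)=\{0\}$, and because $d_0^\snc=0$: a $0$-cell has no lower cell with nonzero incidence, by (L1) and $\dim\ge0$. Multiplying by $t^k$ and summing over $k\ge0$, the first brackets then assemble to $R_t(\K,f)$. The second brackets, after reindexing, assemble to $t\,R_t(\K,f)$. This gives
\[
\sum_{C\in\C_f}P_t(C)=P_t(\W)=P_t(\K)+(1+t)R_t(\K,f).
\]
All sums are formal power series with finitely many contributions per degree, because each $C_k$ is finite-dimensional.

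I expect no real obstacle. The only points needing care are that $\W$ is genuinely a Lefschetz complex, so that Lemma~\ref{lem:rankid} applies to it; this is given by Proposition~\ref{prp:wcsubcpx} together with Lemma~\ref{lem:disjoint}. The other is the bookkeeping at $k=0$ in the reindexing.
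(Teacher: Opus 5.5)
Your proposal is correct and follows the paper's proof essentially step for step. You apply Lemma~\ref{lem:rankid} to both $\K$ and $\W(\K,f)$ and use Lemma~\ref{lem:countMB} to subtract, which gives the same degreewise identity for $b_k^{\W}$. You then handle $k=0$ via $B_{-1}=\{0\}$ and $d_0^\snc=0$, and reindex the sum to obtain $(1+t)R_t(\K,f)$.
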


\begin{proof}
    Fix $k\in\Z_{\ge0}$. By Lemmas~\ref{lem:rankid} and~\ref{lem:countMB}, we obtain
    \[
    m_k^\snc=b_k^\K+\bigl(\rank B_k(\K)-d_{k+1}^\snc\bigr)+\bigl(\rank B_{k-1}(\K)-d_k^\snc\bigr).
    \]
    Applying Lemma~\ref{lem:rankid} with $\W(\K,f)$ in place of $\K$ and using
    $\rank C_k(\W)=m_k^\snc$
    (Remark~\ref{rem:wccpx}), we obtain
    \[
    m_k^\snc=\rank B_k(\W)+\rank B_{k-1}(\W)+b_k^{\W}.
    \]
    Subtracting the second equality from the first, we obtain
    \begin{align*}
    b_k^{\W}={}&b_k^\K+\bigl(\rank B_k(\K)-d_{k+1}^\snc-\rank B_k(\W)\bigr)\\
    &+\bigl(\rank B_{k-1}(\K)-d_k^\snc-\rank B_{k-1}(\W)\bigr)
    \end{align*}
    for every $k\ge0$. At $k=0$, the last term vanishes, since
    $B_{-1}(\K)=\{0\}=B_{-1}(\W)$ and, as in the proof of Lemma~\ref{lem:poly},
    $d_0^\snc=0$. Summing over $k\in\Z_{\ge0}$ and
    reindexing, we obtain
    \[
    P_t(\W)=P_t(\K)+R_t(\K,f)+t\,R_t(\K,f)=P_t(\K)+(1+t)R_t(\K,f).
    \]
    Since $P_t(\W)=\sum_{C\in\C_f}P_t(C)$ by Remark~\ref{rem:wccpx}, this is the desired
    equality.
\end{proof}

We now compare the Morse and the Morse-Bott settings. For a discrete Morse function, the
weakly critical complex is the disjoint union of the critical cells of $f$ and of pairs of
cells with nonzero incidence number. This will identify the remainder terms $r_t(\K,f)$
and $R_t(\K,f)$.

\begin{lemma}\label{lem:samevaluewc}
    Let $f\colon\K\to\R$ be a discrete Morse function and let $\tau,\sigma\in\K$ satisfy
    $w(\tau,\sigma)\neq0$ and $f(\tau)=f(\sigma)$. Then $\tau$ and $\sigma$ are weakly
    critical for $f$.
\end{lemma}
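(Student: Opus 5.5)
By Proposition~\ref{prp:MisMB}, $f$ is also a discrete Morse-Bott function. So the claim amounts to showing $U^\snc(\tau)=D^\snc(\tau)=0$ and $U^\snc(\sigma)=D^\snc(\sigma)=0$. The key observation is that, since $f(\tau)=f(\sigma)$, the pair $(\tau,\sigma)$ already saturates the Morse bounds at both cells.

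First, $\sigma$ is counted in $D(\tau)$, because $w(\tau,\sigma)\neq0$ and $f(\tau)\le f(\sigma)$. Likewise $\tau$ is counted in $U(\sigma)$, because $f(\sigma)\ge f(\tau)$. Hence $D(\tau)\ge1$ and $U(\sigma)\ge1$. By Lemma~\ref{lem:one} we then get $D(\tau)=1$, $U(\tau)=0$, $U(\sigma)=1$ and $D(\sigma)=0$.

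Second, we deduce the vanishing of the strict counts.
\begin{itemize}
\item Since $U^\snc\le U$, we have $U^\snc(\tau)\le U(\tau)=0$ and $D^\snc(\sigma)\le D(\sigma)=0$.
\item For $D^\snc(\tau)$: every cell counted there is also counted in $D(\tau)$. The only cell counted in $D(\tau)$ is $\sigma$. But $\sigma$ is not counted in $D^\snc(\tau)$, because $f(\tau)<f(\sigma)$ fails. Hence $D^\snc(\tau)=0$.
\item For $U^\snc(\sigma)$: the same argument applies. The unique cell counted in $U(\sigma)$ is $\tau$, and $f(\sigma)>f(\tau)$ fails, so $U^\snc(\sigma)=0$.
\end{itemize}
Thus $U^\snc+D^\snc=0$ at both $\tau$ and $\sigma$, so both cells are weakly critical.

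I do not expect any real obstacle. The only point needing care is the uniqueness step: one must use $D(\tau)\le1$ and $U(\sigma)\le1$, which come from the definition of a discrete Morse function, to rule out other cells counted strictly. This is the same pattern already used in the proof of Lemma~\ref{lem:one}.
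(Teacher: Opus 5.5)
Your proposal is correct and follows essentially the same argument as the paper. Both use Lemma~\ref{lem:one} to get $D(\tau)=1$ with $\sigma$ as its unique cell and $U(\tau)=0$, then exclude $\sigma$ from $D^\snc(\tau)$ because $f(\tau)<f(\sigma)$ fails, and treat $\sigma$ symmetrically. Your explicit appeal to Proposition~\ref{prp:MisMB}, so that ``weakly critical'' is defined, is a harmless addition the paper leaves implicit.
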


\begin{proof}
    Since $w(\tau,\sigma)\neq0$ and $f(\tau)\le f(\sigma)$, the cell $\sigma$ is counted in
    $D(\tau)$, so $D(\tau)=1$ with $\sigma$ as the only such cell, and $U(\tau)=0$ by
    Lemma~\ref{lem:one}. If a cell $\nu$ were counted in $D^\snc(\tau)$, it would also be
    counted in $D(\tau)$, hence $\nu=\sigma$; but $f(\tau)<f(\sigma)$ fails. Thus
    $D^\snc(\tau)=0$, and $U^\snc(\tau)\le U(\tau)=0$, so $\tau$ is weakly critical. The
    assertion for $\sigma$ follows in the same way from $f(\sigma)\ge f(\tau)$, using
    $U(\sigma)=1$ and $D(\sigma)=0$.
\end{proof}

Thus the nonzero incidences of $\W(\K,f)$ are exactly the pairs $(\tau,\sigma)$ with
$w(\tau,\sigma)\neq0$ and $f(\tau)=f(\sigma)$; this is what makes the following
decomposition possible.

\begin{proposition}\label{prp:wcofmorse}
    Let $f\colon\K\to\R$ be a discrete Morse function. Then $\W(\K,f)$ is the disjoint union
    of the one-cell complexes given by the critical cells of $f$ and of the two-cell
    complexes $\{\tau,\sigma\}$ with $w^\W(\tau,\sigma)\neq0$. Consequently
    \[
    \sum_{C\in\C_f}P_t(C)=P_t(\W(\K,f))=\sum_{k=0}^\infty m_k t^k.
    \]
\end{proposition}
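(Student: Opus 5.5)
We need to show that the weakly critical cells of a discrete Morse function $f$ split into the critical cells and the two-cell sets $\{\tau,\sigma\}$ with $w(\tau,\sigma)\neq0$ and $f(\tau)=f(\sigma)$. We also need to show that the incidence function $w^\W$ of Definition~\ref{def:wccpx} is nonzero only inside these two-cell sets. The Poincar\'e series identity then follows from Lemma~\ref{lem:disjointsum}, since a one-cell complex $\{\eta^k\}$ contributes $t^k$ and a two-cell block contributes $0$.

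\emph{Step 1 (classification of weakly critical cells).} Let $\alpha$ be weakly critical. Either $U(\alpha)=D(\alpha)=0$, in which case $\alpha$ is critical, or by Lemma~\ref{lem:one} exactly one of $U(\alpha)$, $D(\alpha)$ equals $1$.

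Suppose $D(\alpha)=1$, witnessed by a cell $\sigma$ with $w(\alpha,\sigma)\ne0$ and $f(\alpha)\le f(\sigma)$. Since $D^\snc(\alpha)=0$, the inequality is not strict, so $f(\alpha)=f(\sigma)$. The case $U(\alpha)=1$ is symmetric and yields a partner $\tau$ with $f(\tau)=f(\alpha)$.

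Conversely, Proposition~\ref{prp:MisMB} shows that critical cells are weakly critical, and Lemma~\ref{lem:samevaluewc} shows that both cells of such an equal-value pair are weakly critical. Hence the cells of $\W$ are exactly the critical cells together with the cells lying in equal-value incident pairs.

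\emph{Step 2 (the pieces are disjoint and uncoupled).} The pairs are disjoint because they belong to $-\nabla f$, which is a discrete vector field by Proposition~\ref{prp:gvfisdvf}; condition (V2) prevents two pairs from sharing a cell. A critical cell lies in no pair, since it has $U=D=0$.

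We must also check that $w^\W$ vanishes between different pieces. Suppose $w^\W(\alpha,\beta)\ne0$. Then $w(\alpha,\beta)\ne0$ and $f(\alpha)=f(\beta)$, so $(\alpha,\beta)\in-\nabla f$. This gives $D(\alpha)=1$ with $\beta$ as the unique witness, so neither cell is critical and $\{\alpha,\beta\}$ is precisely one of the pairs. Therefore $w^\W$ is supported within the pieces.

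On each pair, $w^\W(\tau,\sigma)=w(\tau,\sigma)\neq0$, and $w^\W$ is zero otherwise. So $\W$ is the stated disjoint union, each piece being a Lefschetz complex.

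\emph{Step 3 (Poincar\'e series).} A two-cell complex with $w(\tau,\sigma)\ne0$ has $\partial$ an isomorphism $C_{k+1}\to C_k$, so it has zero homology. Lemma~\ref{lem:disjointsum} then gives $P_t(\W)=\sum_k m_k t^k$, and Remark~\ref{rem:wccpx} supplies the first equality.

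The main obstacle is Step 2: one must ensure that no extra incidences of $\W$ link distinct pieces. The argument above handles this through the uniqueness of $D$-witnesses.
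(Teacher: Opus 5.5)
Your proof is correct and follows essentially the same route as the paper's. Both arguments identify the nonzero incidences of $\W(\K,f)$ as equal-value pairs lying in $-\nabla f$, made disjoint by (V2) of Proposition~\ref{prp:gvfisdvf}; use $U^\snc=0$ or $D^\snc=0$ together with Lemma~\ref{lem:one} to place every non-critical weakly critical cell in such a pair; and finish with Lemma~\ref{lem:disjointsum} and Remark~\ref{rem:wccpx}. The only differences are cosmetic: the paper does the disjointness step before the classification, and your appeal to the uniqueness of the $D$-witness in Step~2 is unnecessary, since $w^\W(\alpha,\beta)\neq0$ already says by definition that $\{\alpha,\beta\}$ is one of the pairs.
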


\begin{proof}
    By Proposition~\ref{prp:MisMB}, $f$ is a discrete Morse-Bott function, so
    $\W:=\W(\K,f)$ is defined. By Definition~\ref{def:wccpx} and
    Lemma~\ref{lem:samevaluewc}, $w^\W(\tau,\sigma)\neq0$ if and only if $w(\tau,\sigma)\neq0$
    and $f(\tau)=f(\sigma)$; any such pair lies in $-\nabla f$, so by
    Proposition~\ref{prp:gvfisdvf} distinct such pairs have disjoint underlying sets. Hence
    $\W$ is the disjoint union of the two-cell complexes $\{\tau,\sigma\}$ with
    $w^\W(\tau,\sigma)\neq0$ and of the remaining cells, regarded as one-cell complexes.

    A cell of $\W$ lies in no such pair if and only if it is critical for $f$. If $\sigma$ is
    critical, then $U(\sigma)=D(\sigma)=0$, so $w(\tau,\sigma)\neq0$ gives $f(\tau)>f(\sigma)$
    and $w(\sigma,\nu)\neq0$ gives $f(\nu)<f(\sigma)$; hence no cell incident to $\sigma$ has
    the same $f$-value. If $\sigma$ is not critical, then $U(\sigma)+D(\sigma)=1$ by
    Lemma~\ref{lem:one}; assuming $U(\sigma)=1$ and letting $\tau$ be the cell counted in it,
    we have $f(\tau)\le f(\sigma)$, while $U^\snc(\sigma)=0$ gives $f(\tau)\ge f(\sigma)$, so
    $f(\tau)=f(\sigma)$ and $\sigma$ lies in the pair $(\tau,\sigma)$. The case $D(\sigma)=1$
    is symmetric.

    For a two-cell summand the boundary homomorphism is an isomorphism, so its
    Poincar\'{e} series vanishes, while a critical $k$-cell contributes $t^k$. The first
    equality is Remark~\ref{rem:wccpx}, and the second follows from
    Lemma~\ref{lem:disjointsum} applied to the decomposition just established.
\end{proof}

\begin{remark}\label{rem:MtoMB}
    Let $f$ be a discrete Morse function on $\K$. By Proposition~\ref{prp:MisMB}, $f$ is a
    discrete Morse-Bott function, so Lemma~\ref{lem:MBpoly} applies to it. By
    Proposition~\ref{prp:wcofmorse}, we have
    $\sum_{C\in\C_f}P_t(C)=\sum_{k=0}^\infty m_k t^k$. Comparing
    Lemmas~\ref{lem:poly} and~\ref{lem:MBpoly} and dividing by $1+t$, we obtain
    $R_t(\K,f)=r_t(\K,f)$.
\end{remark}

The dimension function is the basic example of a discrete Morse-Bott function. It will be
used in Section~\ref{sec:moves} and in the proof of the nonnegativity of $R_t(\K,f)$.

\begin{lemma}\label{lem:dimMB}
    The function $\dim$ is a discrete Morse-Bott function on $\K$, every cell of $\K$ is
    weakly critical for $\dim$, and the incidence function of $\W(\K,\dim)$ vanishes
    identically. In particular, for every $k$ we have $d^\snc_k(\K,\dim)=0$ and
    $\rank B_k(\W(\K,\dim))=0$.
\end{lemma}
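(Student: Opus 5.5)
The argument is a direct unwinding of the definitions, applied to $f=\dim$; the only structural input is condition (L1).

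First, fix any pair of cells $\tau,\sigma\in\K$ with $w(\tau,\sigma)\neq0$. By (L1) we have $\dim\tau=\dim\sigma+1$, so $\dim\tau>\dim\sigma$. This single inequality drives all the counts.

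Second, compute $U^\snc$ and $D^\snc$ for $\dim$. For a cell $\sigma$, a cell $\tau$ is counted in $U^\snc(\sigma)$ only if $w(\tau,\sigma)\neq0$ and $\dim\sigma>\dim\tau$. This contradicts the first step, so $U^\snc(\sigma)=0$. Similarly, a cell $\nu$ is counted in $D^\snc(\sigma)$ only if $w(\sigma,\nu)\neq0$ and $\dim\sigma<\dim\nu$, which is impossible for the same reason, so $D^\snc(\sigma)=0$. Both quantities are therefore at most $1$, and $\dim$ is a discrete Morse-Bott function. Moreover $U^\snc(\sigma)+D^\snc(\sigma)=0$ for every $\sigma$, so every cell is weakly critical. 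The vanishing of $D^\snc$ on every cell gives $d^\snc_k(\K,\dim)=0$ for all $k$ directly from the definition.

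Third, for the weakly critical complex, use the formula in Definition~\ref{def:wccpx}. It gives $w^\W(\tau,\sigma)=w(\tau,\sigma)$ if $\dim\tau=\dim\sigma$ and $0$ otherwise. If $\dim\tau=\dim\sigma$, then (L1) forces $w(\tau,\sigma)=0$, so in either case $w^\W(\tau,\sigma)=0$. Hence every boundary map $\partial^\W_k$ is zero, and $B_k(\W(\K,\dim))=\im\partial^\W_{k+1}=0$, so its rank is $0$.

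There is no real obstacle here. The only point needing care is to apply (L1) on each side of the strict inequalities, and to note that the formula for $w^\W$ is legitimate because all cells are weakly critical and so the level sets coincide with the weakly critical sets (Remark~\ref{rem:wcunique}).
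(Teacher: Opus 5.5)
Your proof is correct and is essentially the paper's argument: (L1) rules out the strict inequalities defining $U^{\snc}$ and $D^{\snc}$ for $f=\dim$, and it also rules out $w(\tau,\sigma)\neq0$ with $\dim\tau=\dim\sigma$, so $w^{\W}$ vanishes by Definition~\ref{def:wccpx}, the boundary maps of $\W(\K,\dim)$ are zero, and $B_k(\W(\K,\dim))=0$. One small point: the formula for $w^{\W}$ holds for every discrete Morse-Bott function by Remark~\ref{rem:wcunique}, so your closing justification via ``all cells are weakly critical'' is true but not needed.
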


\begin{proof}
    If $w(\tau,\sigma)\neq0$, then $\dim\tau=\dim\sigma+1$ by (L1), so the inequality
    $\dim\sigma>\dim\tau$ fails; hence $\tau$ is not counted in $U^\snc(\sigma)$ and
    $\sigma$ is not counted in $D^\snc(\tau)$. Therefore
    $U^\snc(\sigma)=D^\snc(\sigma)=0$ for every cell $\sigma$, and in particular
    $d^\snc_k(\K,\dim)=0$ for every $k$. Moreover $\dim\tau=\dim\sigma$ never holds for a
    pair with $w(\tau,\sigma)\neq0$, so $w^\W$ vanishes identically by
    Definition~\ref{def:wccpx}. Hence the boundary homomorphism of $\W(\K,\dim)$ is zero
    and $\rank B_k(\W(\K,\dim))=0$ for every $k$.
\end{proof}

\section{Moves}\label{sec:moves}

Throughout this section, unless otherwise stated, let $f\colon\K\to\R$ be a discrete
Morse-Bott function. We modify the pair $(\K,f)$ while keeping $R_t(\K,f)$ unchanged.
To this end, we introduce four \emph{Moves}. Move~(I) cancels a pair of cells with
nonzero incidence number, provided that the pair belongs to the discrete vector field
$-\nabla_s f$ or that the two cells lie in a common weakly critical set.
Move~(II) removes an isolated cell. Move~(III) replaces $f$ by a function which orders
in the same way any two cells connected by a nonzero incidence number. Move~(IV)
cancels a pair of cells with nonzero incidence number and adjoins the elementary block
determined by that pair, under the assumption that $f$ agrees with $\dim$ in the two
dimensions concerned. Thus Moves~(I), (II) and~(IV) modify $\K$, while Move~(III)
modifies $f$.

By Remark~\ref{rem:Ptcriterion} below, it suffices to track the two Poincar\'{e} series
$P_t(\K)$ and $P_t(\W(\K,f))$. The effect of each Move on them is as follows, where $d$
denotes the dimension of the cell removed by Move~(II).
\[
\begin{array}{lcc}
\text{Move} & \text{change in } P_t(\K) & \text{change in } P_t(\W(\K,f))\\\hline
\text{(I)} & \text{none} & \text{none}\\
\text{(II)} & -\,t^{d} & -\,t^{d}\\
\text{(III)} & \text{none} & \text{none}\\
\text{(IV)} & \text{none} & \text{none}
\end{array}
\]
Move~(II) is thus the only Move that changes $P_t(\K)$, and Move~(IV) is the only Move
that adjoins a new summand.

The conclusion of each Move below is an equality between the series $R_t$ of the old pair and
of the new one, and the Moves are applied one after another below. We therefore introduce a
notation for this relation and record that it is an equivalence relation.

\begin{definition}\label{def:sim}
    A \textbf{Morse-Bott pair} is a pair consisting of a Lefschetz complex and a discrete
    Morse-Bott function on it. For Morse-Bott pairs $(\K,f)$ and $(\K',f')$, we write
    $(\K,f)\sim(\K',f')$ if $R_t(\K,f)=R_t(\K',f')$. Being defined by an equality of formal
    power series, $\sim$ is an equivalence relation on Morse-Bott pairs.
\end{definition}

\begin{remark}\label{rem:Ptcriterion}
    Let $(\K,f)$ and $(\K',f')$ be Morse-Bott pairs. If
    \[
    P_t\bigl(\W(\K,f)\bigr)-P_t(\K)=P_t\bigl(\W(\K',f')\bigr)-P_t(\K'),
    \]
    then $(\K,f)\sim(\K',f')$. Indeed, by Lemma~\ref{lem:MBpoly} and Remark~\ref{rem:wccpx},
    applied to $(\K,f)$ and to $(\K',f')$, the hypothesis reads
    $(1+t)R_t(\K,f)=(1+t)R_t(\K',f')$, and hence $R_t(\K,f)=R_t(\K',f')$. In the verification of the Moves below we use this criterion
    rather than the defining formula of $R_t$.
\end{remark}

Move~(IV) below adjoins a new summand, and in the proof of
Theorem~\ref{thm:elementary} the pair $(\K,f)$ is compared with a disjoint union of
elementary blocks. We therefore record how the weakly critical complex and $R_t$ behave
under disjoint unions. Given functions $g\colon\K_1\to\R$ and $h\colon\K_2\to\R$ on
Lefschetz complexes $\K_1$ and $\K_2$, we write $g\sqcup h$ for the function on
$\K_1\sqcup\K_2$ which restricts to $g$ on $\K_1$ and to $h$ on $\K_2$.

\begin{lemma}\label{lem:Rtdisjoint}
    Let $\{\K_i\}_{i\in I}$ be a family of Lefschetz complexes whose disjoint union
    $\bigsqcup_{i\in I}\K_i$ is a Lefschetz complex, and let
    $f\colon\bigsqcup_{i\in I}\K_i\to\R$ be a discrete Morse-Bott function on $\K_i$
    for each $i\in I$. Then $f$ is a discrete Morse-Bott function on
    $\bigsqcup_{i\in I}\K_i$,
    \[
    \W\Bigl(\bigsqcup_{i\in I}\K_i,f\Bigr)=\bigsqcup_{i\in I}\W(\K_i,f),
    \]
    and $R_t\bigl(\bigsqcup_{i\in I}\K_i,f\bigr)=\sum_{i\in I}R_t(\K_i,f)$.
\end{lemma}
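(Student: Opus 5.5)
The argument reduces everything to the observation that, in $\L:=\bigsqcup_{i\in I}\K_i$, a nonzero incidence $w_I(\tau,\sigma)\neq0$ only occurs when $\tau,\sigma$ lie in a common $\K_i$, and then $w_I(\tau,\sigma)=w_i(\tau,\sigma)$. Consequently, for any cell $\sigma\in\K_i$, the sets counted by $U^\snc_{\L,f}(\sigma)$ and $D^\snc_{\L,f}(\sigma)$ coincide with those counted by $U^\snc_{\K_i,f}(\sigma)$ and $D^\snc_{\K_i,f}(\sigma)$. The first step records this equality of counts. Since $f$ is Morse-Bott on each $\K_i$, these counts are at most $1$, so $f$ is a discrete Morse-Bott function on $\L$. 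Moreover, the weakly critical cells of $(\L,f)$ are exactly the union over $i$ of the weakly critical cells of $(\K_i,f)$.

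The second step identifies the weakly critical complexes. Both $\W(\L,f)$ and $\bigsqcup_i\W(\K_i,f)$ have the same set of cells, namely the weakly critical cells, and the same dimension function. By Definition~\ref{def:wccpx}, the incidence of $\W(\L,f)$ is $w_I(\tau,\sigma)$ when $f(\tau)=f(\sigma)$ and $0$ otherwise. That of $\bigsqcup_i\W(\K_i,f)$ is $w_i(\tau,\sigma)$ when $\tau,\sigma\in\K_i$ and $f(\tau)=f(\sigma)$, and $0$ otherwise. These agree by the definition of $w_I$. The only subtlety is that $\W(\L,f)$ is formally a disjoint union over level sets of $\L$ rather than over $i$. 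I would handle this by comparing the underlying triples directly, regarding the two as equal Lefschetz complexes on the same cell set. Only the triple enters the homology and $P_t$, so this identification suffices.

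For the last assertion, I would use Lemma~\ref{lem:MBpoly} together with Remark~\ref{rem:wccpx}, which give $P_t(\W(\L,f))=P_t(\L)+(1+t)R_t(\L,f)$, and the analogous identity for each $\K_i$. Lemma~\ref{lem:disjointsum} yields $P_t(\L)=\sum_iP_t(\K_i)$. Applying the same lemma to the family $\{\W(\K_i,f)\}$ gives $P_t(\W(\L,f))=\sum_iP_t(\W(\K_i,f))$; its disjoint union is a Lefschetz complex because it equals $\W(\L,f)$, or directly by Lemma~\ref{lem:disjoint}, since its $k$-cells lie among the finitely many $k$-cells of $\L$. Subtracting gives $(1+t)R_t(\L,f)=(1+t)\sum_iR_t(\K_i,f)$.

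Before dividing by $1+t$, I must check that $\sum_iR_t(\K_i,f)$ is a well-defined formal power series. The coefficient of $t^{k-1}$ in $R_t(\K_i,f)$ involves only cells of dimensions $k-1$ and $k$, so it vanishes for all but finitely many $i$ by Lemma~\ref{lem:disjointsum}. Then $1+t$ is invertible in $\Z[[t]]$, which gives the equality. Alternatively, one can compare coefficients directly using the additivity of $\rank B_{k-1}$ from Lemma~\ref{lem:disjointsum} and of $d_k^\snc$ from the first step. I expect no step to be a genuine obstacle; the most delicate point is the bookkeeping identifying $\W(\L,f)$ as a disjoint union indexed by $I$ rather than by level sets.
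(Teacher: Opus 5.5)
Your proposal is correct and follows essentially the same route as the paper. Both arguments use the equality of $U^\snc$ and $D^\snc$ computed in $\bigsqcup_{i\in I}\K_i$ and in each summand, then compare the cells and incidence functions of the two weakly critical complexes directly. Both finish with Lemma~\ref{lem:MBpoly}, Remark~\ref{rem:wccpx} and Lemma~\ref{lem:disjointsum}, having first checked that $\sum_i R_t(\K_i,f)$ is coefficientwise finite before cancelling $1+t$.
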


\begin{proof}
    Write $\L:=\bigsqcup_{i\in I}\K_i$ and let $w^\L$ be its incidence function. Cells of
    $\L$ lying in distinct summands have incidence number zero, so
    $U^\snc_{\L,f}(\sigma)=U^\snc_{\K_i,f}(\sigma)$ and
    $D^\snc_{\L,f}(\sigma)=D^\snc_{\K_i,f}(\sigma)$ for $\sigma\in\K_i$. In particular $f$
    is a discrete Morse-Bott function on $\L$, and a cell of $\K_i$ is weakly critical for
    $f$ on $\L$ if and only if it is weakly critical for $f$ on $\K_i$. Hence $\W(\L,f)$
    and $\bigsqcup_{i\in I}\W(\K_i,f)$ have the same cells. Their incidence functions
    agree as well. Indeed, by Definition~\ref{def:wccpx}, the incidence number of two
    cells $\tau,\sigma$ of $\W(\L,f)$ is $w^\L(\tau,\sigma)$ if $f(\tau)=f(\sigma)$ and
    $0$ otherwise. If $\tau$ and $\sigma$ lie in distinct summands, this number is zero,
    and so is the incidence number of $\tau$ and $\sigma$ in
    $\bigsqcup_{i\in I}\W(\K_i,f)$. If both lie in $\K_i$, then $w^\L(\tau,\sigma)$ is
    their incidence number in $\K_i$, and Definition~\ref{def:wccpx} applied to $\K_i$
    gives the same value in $\W(\K_i,f)$. This proves the first two assertions.

    Let $k\in\Z_{\ge0}$. If $\K_i$ has no $(k+1)$-cell, then
    \[
    \rank B_k(\K_i)=d^\snc_{k+1}(\K_i,f)
    =\rank B_k\bigl(\W(\K_i,f)\bigr)=0.
    \]
    Thus, the coefficient $R_t(\K_i,f)_k$ of $t^k$ is zero. By
    Lemma~\ref{lem:disjointsum},
    only finitely many $i\in I$ satisfy $\rank C_{k+1}(\K_i)\neq0$, and therefore
    $R_t(\K_i,f)_k=0$ for all but finitely many $i\in I$. Hence
    $\sum_{i\in I}R_t(\K_i,f)$ is a well-defined formal power series, and the sums below
    may be rearranged coefficientwise. Apply Lemma~\ref{lem:MBpoly} and
    Remark~\ref{rem:wccpx} first to $\L$ and then to each $\K_i$. We obtain
    \begin{align*}
    (1+t)R_t(\L,f)&=P_t\bigl(\W(\L,f)\bigr)-P_t(\L)\\
    &=\sum_{i\in I}P_t\bigl(\W(\K_i,f)\bigr)-\sum_{i\in I}P_t(\K_i)\\
    &=\sum_{i\in I}\Bigl(P_t\bigl(\W(\K_i,f)\bigr)-P_t(\K_i)\Bigr)\\
    &=(1+t)\sum_{i\in I}R_t(\K_i,f),
    \end{align*}
    where the second equality applies Lemma~\ref{lem:disjointsum} to $\L$ and to
    $\W(\L,f)=\bigsqcup_{i\in I}\W(\K_i,f)$. Hence
    $R_t(\L,f)=\sum_{i\in I}R_t(\K_i,f)$.
\end{proof}

Moves~(I) and~(IV) both cancel a pair of cells with nonzero incidence number. We now
define the triple obtained by removing such a pair and correcting the incidence numbers
of the remaining cells.

\begin{definition}\label{def:quotient}
    Let $\mu,\kappa\in \K$ with $w(\mu,\kappa)\neq0$. We define the triple
    $\K\{\mu,\kappa\}:=(X',\dim',w')$, where $X':=X\setminus\{\mu,\kappa\}$, the function
    $\dim'\colon X'\to\Z_{\ge0}$ is given by $\dim':=\dim|_{X'}$, and the function
    $w'\colon X'\times X'\to\R$ is given by
    \[
    w'(\tau,\sigma):=w(\tau,\sigma)-\frac{w(\tau,\kappa)w(\mu,\sigma)}{w(\mu,\kappa)}.
    \]
\end{definition}

\begin{remark}
    The triple $\K\{\mu,\kappa\}$ coincides with the \emph{quotient} of $\K$ by the vector $(\kappa,\mu)$ introduced in \cite[Def.~3.9]{edel2024DMFLef}, where the passage from $\K$ to $\K\{\mu,\kappa\}$ is called the \emph{cancellation} of that vector. Remark~\ref{rem:quotientinc} below corresponds to \cite[Prop.~3.12\,(ii)]{edel2024DMFLef}. Proposition~\ref{prp:quotientLef} below, that $\K\{\mu,\kappa\}$ is again a Lefschetz complex, corresponds to \cite[Prop.~3.10]{edel2024DMFLef}. Lemma~\ref{lem:ChHoMuKappa} below, that $C_\bullet(\K)$ and $C_\bullet(\K\{\mu,\kappa\})$ are chain homotopy equivalent, corresponds to \cite[Thm.~3.11]{edel2024DMFLef}. We give an independent proof of Proposition~\ref{prp:quotientLef}, since it underlies all of the constructions below. For Lemma~\ref{lem:ChHoMuKappa}, we explain why the construction in \cite[Lem.~2.5]{mischaikow2013morse} applies in the present setting.
\end{remark}

\begin{remark}\label{rem:quotientinc}
    Let $\tau,\sigma\in\K\{\mu,\kappa\}$ and denote by $w'$ the incidence function of
    $\K\{\mu,\kappa\}$. By Definition~\ref{def:quotient}, if $w(\tau,\kappa)=0$ or
    $w(\mu,\sigma)=0$, then $w'(\tau,\sigma)=w(\tau,\sigma)$.
\end{remark}

\begin{example}
Figure~\ref{fig:quotient} illustrates the construction of triples described in
Definition~\ref{def:quotient}. Let $\K$ be the Lefschetz complex shown in the
diagram on the left, whose cells are $\nu_i$, $\sigma_i$, $\tau_i$ and $\alpha_i$
for $i\in\{1,2\}$. The incidence numbers $w(\tau_1,\sigma_1)$
and $w(\sigma_1,\nu_1)$ are both equal to $1$, so Definition~\ref{def:quotient}
applies to the pairs $(\tau_1,\sigma_1)$ and $(\sigma_1,\nu_1)$. The center
diagram shows the triple $\K\{\tau_1,\sigma_1\}$, obtained by removing the cells
$\tau_1$ and $\sigma_1$ and correcting the incidence numbers of the remaining
cells. This correction makes the incidence number of the pair $(\tau_2,\sigma_2)$
zero. The diagram on the right shows the triple $\K\{\sigma_1,\nu_1\}$, obtained
in the same way from the pair $(\sigma_1,\nu_1)$. There the correction instead
makes the incidence number of the pair $(\sigma_2,\nu_2)$ zero. In both cases the correction changes an incidence number between two cells that
are not removed. Neither triple is obtained from $\K$ by discarding two cells and
keeping the remaining incidence numbers unchanged. Which incidence number changes
depends on the pair that is removed, and Remark~\ref{rem:quotientinc} identifies
it in each case.
\end{example}

\begin{figure}[htbp]
\centering
\begin{tikzpicture}[scale=1]

    \begin{scope}[shift={(-4,0)}]
      \node (a) at (0,0) {$\nu_1$};
      \node (b) at (0,1.5) {$\sigma_1$};
      \node (c) at (0,3) {$\tau_1$};
      \node (d) at (2,0) {$\nu_2$};
      \node (e) at (2,1.5) {$\sigma_2$};
      \node (f) at (2,3) {$\tau_2$};

      \node (m) at (0,4.5) {$\alpha_1$};
      \node (n) at (2,4.5) {$\alpha_2$};

      \draw (a) -- (b) node[near end, left] {$1$};
      \draw (b) -- (c) node[near end, left] {$1$};
      \draw (d) -- (e) node[near end, right] {$-1$};
      \draw (e) -- (f) node[near end, right] {$-1$};
      \draw (a) -- (e) node[near end, above] {$1$};
      \draw (d) -- (b) node[near end, above] {$-1$};
      \draw (e) -- (c) node[near end, above] {$-1$};
      \draw (b) -- (f) node[near end, above] {$1$};

      \draw (c) -- (m) node[near end, left] {$1$};
      \draw (f) -- (n) node[near end, right] {$-1$};
      \draw (f) -- (m) node[near end, above] {$-1$};
      \draw (c) -- (n) node[near end, above] {$1$};

      \node at (1,-0.5) {$\K$};
    \end{scope}

    \begin{scope}[shift={(0,0)}]
      \node (a) at (0,0) {$\nu_1$};
      \node (d) at (2,0) {$\nu_2$};
      \node (e) at (2,1.5) {$\sigma_2$};
      \node (f) at (2,3) {$\tau_2$};

      \node (m) at (0,4.5) {$\alpha_1$};
      \node (n) at (2,4.5) {$\alpha_2$};

      \draw (d) -- (e) node[near end, right] {$-1$};
      \draw (a) -- (e) node[near end, above] {$1$};

      \draw (f) -- (n) node[near end, right] {$-1$};
      \draw (f) -- (m) node[near end, above] {$-1$};

      \node at (1,-0.5) {$\K\{\tau_1,\sigma_1\}$};
    \end{scope}

    \begin{scope}[shift={(4,0)}]
      \node (c) at (0,3) {$\tau_1$};
      \node (d) at (2,0) {$\nu_2$};
      \node (e) at (2,1.5) {$\sigma_2$};
      \node (f) at (2,3) {$\tau_2$};

      \node (m) at (0,4.5) {$\alpha_1$};
      \node (n) at (2,4.5) {$\alpha_2$};

      \draw (e) -- (f) node[near end, right] {$-1$};
      \draw (e) -- (c) node[near end, above] {$-1$};

      \draw (c) -- (m) node[near end, left] {$1$};
      \draw (f) -- (n) node[near end, right] {$-1$};
      \draw (f) -- (m) node[near end, above] {$-1$};
      \draw (c) -- (n) node[near end, above] {$1$};

      \node at (1,-0.5) {$\K\{\sigma_1,\nu_1\}$};
    \end{scope}

\end{tikzpicture}

\caption{Three diagrams illustrating a Lefschetz complex $\K$, the triple
$\K\{\tau_1,\sigma_1\}$, and the triple $\K\{\sigma_1,\nu_1\}$.}
\label{fig:quotient}
\end{figure}

\begin{proposition}\label{prp:quotientLef}
    The triple $\K\{\mu,\kappa\}=(X',\dim',w')$ is a Lefschetz complex.
\end{proposition}

\begin{proof}
    For every $k\in\Z_{\ge0}$, we have
    \[
    (\dim')^{-1}(k)\subset\dim^{-1}(k).
    \]
    Hence $(\dim')^{-1}(k)$ is finite. It remains to verify conditions (L1) and (L2)
    for $\K\{\mu,\kappa\}$.

    For (L1), let $\tau,\sigma\in X'$ with $w'(\tau,\sigma)\neq0$.
    Suppose first that $w(\tau,\sigma)\neq0$. Then
    \[
    \dim'\tau-\dim'\sigma=\dim\tau-\dim\sigma=1,
    \]
    by (L1) for $\K$. Otherwise, $w(\tau,\sigma)=0$, and hence
    \[
    \frac{w(\tau,\kappa)w(\mu,\sigma)}{w(\mu,\kappa)}\neq0.
    \]
    It follows that $w(\tau,\kappa)\neq0$ and $w(\mu,\sigma)\neq0$. By (L1) for $\K$,
    we have $\dim\tau-\dim\kappa=1$,
    $\dim\mu-\dim\sigma=1$ and $\dim\mu-\dim\kappa=1$, so
    \[
    \dim'\tau-\dim'\sigma=\dim\tau-\dim\sigma=(\dim\kappa+1)-(\dim\mu-1)=\dim\kappa+1-\dim\kappa=1.
    \]
    Thus, (L1) holds for $\K\{\mu,\kappa\}$.
    
    For (L2), let $\tau,\nu\in X'$. For $\alpha,\beta\in X$, put
    $S(\alpha,\beta):=\sum_{\sigma\in X'}w(\alpha,\sigma)w(\sigma,\beta)$. For
    $\sigma\in X'$, by definition,
    \begin{align*}
    w'(\tau,\sigma)
    &=w(\tau,\sigma)
      -\frac{w(\tau,\kappa)w(\mu,\sigma)}{w(\mu,\kappa)},\\
    w'(\sigma,\nu)
    &=w(\sigma,\nu)
      -\frac{w(\sigma,\kappa)w(\mu,\nu)}{w(\mu,\kappa)}.
    \end{align*}
    Multiplying the displayed formulas and summing over $\sigma\in X'$, we obtain
    \begin{align*}
    \sum_{\sigma\in X'}w'(\tau,\sigma)w'(\sigma,\nu)
    &=S(\tau,\nu)
      -\frac{w(\mu,\nu)}{w(\mu,\kappa)}S(\tau,\kappa)\\
    &\quad-\frac{w(\tau,\kappa)}{w(\mu,\kappa)}S(\mu,\nu)
      +\frac{w(\tau,\kappa)w(\mu,\nu)}{w(\mu,\kappa)^2}S(\mu,\kappa).
    \end{align*}
    Since $X=X'\sqcup\{\mu,\kappa\}$, condition (L2) for $\K$ gives
    \[
    S(\alpha,\beta)=-w(\alpha,\mu)w(\mu,\beta)-w(\alpha,\kappa)w(\kappa,\beta)
    \]
    for every $\alpha,\beta\in X$. By (L1) for $\K$, we have $w(\mu,\mu)=w(\kappa,\kappa)=0$,
    so $S(\tau,\kappa)=-w(\tau,\mu)w(\mu,\kappa)$, $S(\mu,\nu)=-w(\mu,\kappa)w(\kappa,\nu)$
    and $S(\mu,\kappa)=0$. Substituting these, the second and third terms become
    $w(\tau,\mu)w(\mu,\nu)$ and $w(\tau,\kappa)w(\kappa,\nu)$, which cancel the corresponding
    terms of $S(\tau,\nu)$, and the fourth term vanishes. Thus, (L2) holds for
    $\K\{\mu,\kappa\}$.
\end{proof}

\begin{lemma}\label{lem:ChHoMuKappa}
    The chain complexes $C_\bullet(\K)$ and $C_\bullet(\K\{\mu,\kappa\})$ are chain
    homotopy equivalent.
\end{lemma}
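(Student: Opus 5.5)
We will write down explicit chain maps and a chain homotopy, following the algebraic Morse reduction of a single pair. Write $c:=w(\mu,\kappa)\neq0$, and set $n:=\dim\mu$, so that $\dim\kappa=n-1$. Denote the boundaries by $\partial$ on $C_\bullet(\K)$ and $\partial'$ on $C_\bullet(\K\{\mu,\kappa\})$. By (L1) these two chain complexes agree outside degrees $n$ and $n-1$, and in those degrees they differ only by removing a generator.

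We define $\varphi\colon C_\bullet(\K)\to C_\bullet(\K\{\mu,\kappa\})$ on cells as follows:
\[
\varphi(\alpha)=\alpha \ \ (\alpha\neq\mu,\kappa),\qquad \varphi(\mu)=0,\qquad \varphi(\kappa)=-\frac1c\sum_{\sigma\in X'}w(\mu,\sigma)\sigma .
\]
In words, $\kappa$ is sent to $\kappa-\frac1c\partial\mu$ with the $\kappa$-term dropped. We define $\psi\colon C_\bullet(\K\{\mu,\kappa\})\to C_\bullet(\K)$ by
\[
\psi(\alpha)=\alpha-\frac{w(\alpha,\kappa)}{c}\,\mu .
\]
The correction term is nonzero only when $\dim\alpha=n$. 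Finally, we take the homotopy $h\colon C_\bullet(\K)\to C_{\bullet+1}(\K)$ given by
\[
h(\kappa)=\frac1c\mu,\qquad h(\alpha)=0 \ \ (\alpha\neq\kappa).
\]

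The verification proceeds in this order.

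\emph{Step 1: $\psi$ is a chain map.} For $\alpha\in X'$ we compute
\[
\partial\psi(\alpha)=\partial\alpha-\frac{w(\alpha,\kappa)}{c}\partial\mu .
\]
The coefficient of $\kappa$ in this expression is $w(\alpha,\kappa)-w(\alpha,\kappa)=0$. The coefficient of any $\sigma\in X'$ is exactly $w'(\alpha,\sigma)$ by Definition~\ref{def:quotient}. On the other hand, $\psi\partial'\alpha$ has coefficient $w'(\alpha,\sigma)$ on each $\sigma$, and its $\mu$-coefficient is
\[
-\frac1c\sum_\sigma w'(\alpha,\sigma)w(\sigma,\kappa).
\]
We must show this $\mu$-coefficient vanishes, i.e.\ that $\partial\psi(\alpha)$ has no $\mu$-term. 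This follows from (L2): expanding $w'$ and using $\sum_{\sigma\in X}w(\alpha,\sigma)w(\sigma,\kappa)=0$ together with $w(\kappa,\kappa)=0$ and $\sum_{\sigma\in X}w(\mu,\sigma)w(\sigma,\kappa)=0$, exactly as in the proof of Proposition~\ref{prp:quotientLef}.

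\emph{Step 2: $\varphi$ is a chain map.} This is checked the same way, dually. Alternatively, one checks $\varphi\circ\partial=\partial'\circ\varphi$ directly on each cell type: $\mu$, $\kappa$, cells having $\mu$ in their boundary, and all others. Each case uses the definition of $w'$ and (L2).

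\emph{Step 3: the compositions.} We show $\varphi\circ\psi=\mathrm{id}$ on $C_\bullet(\K\{\mu,\kappa\})$, which is immediate since $\varphi(\mu)=0$. We then show
\[
\mathrm{id}-\psi\circ\varphi=\partial h+h\partial
\]
on $C_\bullet(\K)$, cell by cell:
\begin{itemize}
\item For $\alpha=\mu$: the left side is $\mu$, and the right side is $h(\partial\mu)=\frac{w(\mu,\kappa)}{c}\mu=\mu$.
\item For $\alpha=\kappa$: the left side is $\kappa-\psi\varphi(\kappa)$. The right side is $\partial h\kappa=\frac1c\partial\mu$, plus $h\partial\kappa=0$ since $\partial\kappa$ has no $\kappa$-term.
\item For a generic $\alpha$: the left side is $\frac{w(\alpha,\kappa)}{c}\mu$, and the right side is $h(\partial\alpha)=\frac{w(\alpha,\kappa)}{c}\mu$.
\end{itemize}

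\emph{Main obstacle.} The only real difficulty is the $\kappa$-case of Step 3, together with the bookkeeping of the $\mu$-coefficients in Steps 1 and 2. There we must confirm that $\psi\varphi(\kappa)$ equals $\kappa-\frac1c\partial\mu$, including its $\mu$-correction terms. Those corrections are $\frac{1}{c^2}\sum_\sigma w(\mu,\sigma)w(\sigma,\kappa)\mu$, which vanish by (L2) applied to the pair $(\mu,\kappa)$. The sign and the factor $1/c$ must be tracked carefully here.

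Once Step 3 is done, $\varphi$ and $\psi$ form a chain homotopy equivalence, with $\varphi\psi$ equal to the identity and $\psi\varphi$ homotopic to the identity via $h$. This is the construction of \cite[Lem.~2.5]{mischaikow2013morse}; all sums are finite because each $\dim^{-1}(k)$ is finite.
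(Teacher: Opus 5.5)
Your maps $\varphi$, $\psi$, $h$ are the right ones. They are the reduction maps of \cite[Lem.~2.5]{mischaikow2013morse}, which the paper simply cites without writing out, so your route is the paper's route made explicit. The map $\varphi$ is indeed a chain map, $\varphi\psi=\mathrm{id}$ holds, and your three cases in Step~3 are correct. Step~1, however, contains a genuine error. The claim you set out to prove there is false.

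When you list the coefficients of $\partial\psi(\alpha)=\partial\alpha-\frac{w(\alpha,\kappa)}{c}\partial\mu$, you treat $\kappa$ and the cells of $X'$ but forget $\mu$ itself. The $\mu$-coefficient is $w(\alpha,\mu)$, which is nonzero whenever $\alpha$ is an $(n+1)$-cell with $\mu$ in its boundary. So $\partial\psi(\alpha)$ can have a $\mu$-term, and the $\mu$-coefficient of $\psi\partial'\alpha$ cannot be shown to vanish, because it does not. For instance, take the complex of Figure~\ref{fig:quotient} with $\mu=\sigma_1$, $\kappa=\nu_1$ and $\alpha=\tau_1$. Then $\partial\psi(\tau_1)$ and $\psi\partial'(\tau_1)$ both equal $\sigma_1-\sigma_2$, and both have $\mu$-coefficient $1$.

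What must be shown is that the two $\mu$-coefficients are equal. The (L2) computation you describe gives exactly this, not zero. Your sum runs over $X'$, so it omits the $\sigma=\mu$ term of (L2) for the pair $(\alpha,\kappa)$, and that term is $c\,w(\alpha,\mu)$. Using $w(\kappa,\kappa)=0$, and noting that the second sum below vanishes termwise by (L1), one gets
\[
\sum_{\sigma\in X'}w'(\alpha,\sigma)w(\sigma,\kappa)
=\sum_{\sigma\in X'}w(\alpha,\sigma)w(\sigma,\kappa)-\frac{w(\alpha,\kappa)}{c}\sum_{\sigma\in X'}w(\mu,\sigma)w(\sigma,\kappa)
=-c\,w(\alpha,\mu).
\]
Hence the $\mu$-coefficient of $\psi\partial'\alpha$ is $w(\alpha,\mu)$, which matches $\partial\psi(\alpha)$. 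With this correction Step~1 goes through and the argument is complete.

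Two smaller points. First, the correction terms in $\psi\varphi(\kappa)$ vanish termwise by (L1), since every $\sigma$ with $w(\mu,\sigma)\neq0$ has the dimension of $\kappa$; no appeal to (L2) is needed. Second, $\kappa-\frac1c\partial\mu$ has no $\kappa$-term to drop, since its $\kappa$-coefficient is already $1-\frac{c}{c}=0$.
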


\begin{proof}
    The chain maps and chain homotopy used in the proof of
    \cite[Lem.~2.5]{mischaikow2013morse} remain well-defined in the present setting,
    since there are only finitely many cells in each degree. The same identities
    therefore establish the chain homotopy equivalence.
\end{proof}

\begin{lemma}\label{lem:rankcancel}
    For every $k\in\Z_{\ge0}$,
    \[
    \rank B_k(\K)=
    \begin{cases}
        \rank B_k(\K\{\mu,\kappa\})+1 & \text{if } k=\dim\kappa,\\
        \rank B_k(\K\{\mu,\kappa\}) & \text{if } k\neq\dim\kappa.
    \end{cases}
    \]
\end{lemma}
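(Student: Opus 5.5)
We will combine the rank identity of Lemma~\ref{lem:rankid} with the invariance of Betti numbers. Write $\K':=\K\{\mu,\kappa\}$ and $c:=\dim\kappa$, so that $\dim\mu=c+1$. By Lemma~\ref{lem:ChHoMuKappa} and Lemma~\ref{lem:rankhom}, $b_k^\K=b_k^{\K'}$ for every $k$. Since $\K'$ is obtained from $\K$ by deleting exactly one $c$-cell and one $(c+1)$-cell, we have $n_k(\K)=n_k(\K')+\delta_{k,c}+\delta_{k,c+1}$.

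Put $x_k:=\rank B_k(\K)-\rank B_k(\K')$ for $k\ge-1$, with $x_{-1}=0$ since $B_{-1}=\{0\}$ in both complexes. Subtracting the identity of Lemma~\ref{lem:rankid} for $\K'$ from the one for $\K$, and cancelling the Betti numbers, gives
\[
x_k+x_{k-1}=\delta_{k,c}+\delta_{k,c+1}\qquad(k\ge0).
\]
We then solve this recursion by induction on $k$ starting from $x_{-1}=0$. For $k<c$ the right side vanishes, so $x_k=0$. At $k=c$ we get $x_c=1$. At $k=c+1$ we get $x_{c+1}=1-x_c=0$. For $k>c+1$ we get $x_k=-x_{k-1}=0$. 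This is exactly the claimed formula.

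The only delicate point is that every quantity involved is finite. This holds because each $C_k$ is finite-dimensional by the degreewise finiteness of both complexes (Proposition~\ref{prp:quotientLef}), so Lemma~\ref{lem:rankid} applies degreewise. The substantive input is the chain homotopy equivalence, which we take from Lemma~\ref{lem:ChHoMuKappa}. With that in hand there is no real obstacle; the argument is bookkeeping, and the base case $x_{-1}=0$ is what starts the induction.
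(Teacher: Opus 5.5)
Your proposal is correct and follows the paper's proof essentially verbatim. Both subtract Lemma~\ref{lem:rankid} for $\K\{\mu,\kappa\}$ from the one for $\K$, cancel the Betti numbers using Lemmas~\ref{lem:ChHoMuKappa} and~\ref{lem:rankhom}, and solve the recursion $x_k+x_{k-1}=\delta_{k,c}+\delta_{k,c+1}$ by induction from $x_{-1}=0$.
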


\begin{proof}
    Write $\K':=\K\{\mu,\kappa\}$. For every $k\in\Z_{\ge-1}$, put
    $\gamma_k:=\rank C_k(\K)-\rank C_k(\K')$ and
    $\delta_k:=\rank B_k(\K)-\rank B_k(\K')$. By Lemmas~\ref{lem:ChHoMuKappa}
    and~\ref{lem:rankhom} we have $b_k^\K=b_k^{\K'}$, so Lemma~\ref{lem:rankid} applied
    to $\K$ and to $\K'$ yields $\gamma_k=\delta_k+\delta_{k-1}$ for every $k\ge0$.
    Since $\gamma_k=1$ for $k\in\{\dim\mu,\dim\kappa\}$ and $\gamma_k=0$ otherwise, and
    $\delta_{-1}=0$, induction on $k$ gives the assertion.
\end{proof}

\begin{lemma}\label{lem:cancelvalue}
    Let $\mu,\kappa\in\K$ with $w(\mu,\kappa)\neq0$. Assume that either
    \begin{enumerate}[(a)]
        \item $\kappa\to_f\mu$, or
        \item $\mu$ and $\kappa$ are weakly critical and $f(\mu)=f(\kappa)$.
    \end{enumerate}
    Then for any cells $\alpha,\beta\in\K\{\mu,\kappa\}$,
    \begin{align*}
        w(\mu,\alpha)\neq0&\implies f(\alpha)\le f(\mu), &
        w(\beta,\mu)\neq0&\implies f(\mu)\le f(\beta),\\
        w(\kappa,\alpha)\neq0&\implies f(\alpha)\le f(\kappa), &
        w(\beta,\kappa)\neq0&\implies f(\kappa)\le f(\beta).
    \end{align*}
\end{lemma}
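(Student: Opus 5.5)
The plan is to split into the two hypotheses (a) and (b) and, in each case, read off the four implications from the values of $U^\snc$ and $D^\snc$ at $\mu$ and $\kappa$. The point to keep in mind is that $\alpha,\beta\in\K\{\mu,\kappa\}$, so $\alpha\ne\kappa$ and $\beta\ne\mu$. This is exactly what lets us exclude the one exceptional incidence in case (a).

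Case (b) is immediate. Both $\mu$ and $\kappa$ are weakly critical, so
\[
U^\snc(\mu)=D^\snc(\mu)=U^\snc(\kappa)=D^\snc(\kappa)=0 .
\]
Now take any $\alpha$ with $w(\mu,\alpha)\ne0$. Since $D^\snc(\mu)=0$, the strict inequality $f(\mu)<f(\alpha)$ fails, that is, $f(\alpha)\le f(\mu)$. In the same way, $U^\snc(\mu)=0$ gives $f(\mu)\le f(\beta)$ whenever $w(\beta,\mu)\ne0$. The two implications for $\kappa$ follow identically from $D^\snc(\kappa)=0$ and $U^\snc(\kappa)=0$. Here the hypothesis $f(\mu)=f(\kappa)$ is not even needed.

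Case (a) uses Lemma~\ref{lem:MBone}. The hypothesis $\kappa\to_f\mu$ means $(\mu,\kappa)\in-\nabla_s f$, so $w(\mu,\kappa)\ne0$ and $f(\mu)<f(\kappa)$.
\begin{enumerate}[(1)]
    \item The cell $\kappa$ is counted in $D^\snc(\mu)$, so $D^\snc(\mu)=1$ with $\kappa$ as the unique cell counted there. Lemma~\ref{lem:MBone} then forces $U^\snc(\mu)=0$.
    \item Symmetrically, $\mu$ is counted in $U^\snc(\kappa)$, so $U^\snc(\kappa)=1$ with $\mu$ as the unique such cell, and Lemma~\ref{lem:MBone} gives $D^\snc(\kappa)=0$.
\end{enumerate}
From these we get the four implications as follows.
\begin{itemize}
    \item The implication for $w(\beta,\mu)\ne0$ follows from $U^\snc(\mu)=0$.
    \item The implication for $w(\kappa,\alpha)\ne0$ follows from $D^\snc(\kappa)=0$.
    \item If $w(\mu,\alpha)\ne0$ with $\alpha\ne\kappa$, then $\alpha$ is not counted in $D^\snc(\mu)$ by uniqueness. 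So $f(\mu)<f(\alpha)$ fails, giving $f(\alpha)\le f(\mu)$.
    \item If $w(\beta,\kappa)\ne0$ with $\beta\ne\mu$, then $\beta$ is not counted in $U^\snc(\kappa)$, giving $f(\kappa)\le f(\beta)$.
\end{itemize}

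No step is a real obstacle. The only care needed is in case (a), in the two implications where the pair $(\mu,\kappa)$ itself would violate the conclusion. These are exactly the ones handled by the uniqueness statements from Lemma~\ref{lem:MBone} together with $\alpha\ne\kappa$ and $\beta\ne\mu$.
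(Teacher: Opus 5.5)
Your proposal is correct and follows essentially the same route as the paper. Case (b) comes from the vanishing of all four counts. Case (a) comes from $D^\snc(\mu)=U^\snc(\kappa)=1$ with unique witnesses $\kappa$ and $\mu$, from Lemma~\ref{lem:MBone} for $U^\snc(\mu)=D^\snc(\kappa)=0$, and from $\alpha\neq\kappa$, $\beta\neq\mu$. One small attribution slip: the uniqueness of those witnesses comes from the Morse-Bott bounds $D^\snc(\mu)\le1$ and $U^\snc(\kappa)\le1$ themselves, not from Lemma~\ref{lem:MBone}, which only supplies the two vanishing counts.
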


\begin{proof}
    In case (a), since $f(\mu)<f(\kappa)$, the cell $\kappa$ is counted in
    $D^\snc(\mu)$, so $D^\snc(\mu)=1$ and $\kappa$ is the only cell counted in it.
    Likewise $U^\snc(\kappa)=1$ and $\mu$ is the only cell counted in it. By
    Lemma~\ref{lem:MBone}, $U^\snc(\mu)=0$ and $D^\snc(\kappa)=0$. Let
    $\alpha,\beta\in\K\{\mu,\kappa\}$. Then $\alpha\neq\kappa$ and $\beta\neq\mu$.
    If $w(\mu,\alpha)\neq0$, then $\alpha$ is not counted in $D^\snc(\mu)$, that is,
    $f(\mu)<f(\alpha)$ fails. If $w(\beta,\kappa)\neq0$, then $\beta$ is not counted in
    $U^\snc(\kappa)$, that is, $f(\kappa)>f(\beta)$ fails. The remaining two implications
    follow from $U^\snc(\mu)=0$ and $D^\snc(\kappa)=0$.

    In case (b), the cells $\mu$ and $\kappa$ are weakly critical, so
    $U^\snc(\mu)=D^\snc(\mu)=U^\snc(\kappa)=D^\snc(\kappa)=0$. Each of the four implications
    follows from one of these equalities.
\end{proof}

\begin{lemma}\label{lem:cancelwc}
    Under the assumptions of Lemma~\ref{lem:cancelvalue}, denote by $\K'$ the Lefschetz
    complex $\K\{\mu,\kappa\}$ and by $w'$ its incidence function. Then
    $w'(\tau,\sigma)=w(\tau,\sigma)$ for any $\tau,\sigma\in\K'$ with $f(\sigma)>f(\tau)$.
    Furthermore
    \[
    U^\snc_{\K',f}(\sigma)=U^\snc_{\K,f}(\sigma),\qquad
    D^\snc_{\K',f}(\sigma)=D^\snc_{\K,f}(\sigma)
    \]
    for every $\sigma\in\K'$. In particular $f$ is a discrete Morse-Bott function on $\K'$.
    Moreover,
    \[
    \W(\K',f)=
    \begin{cases}
        \W(\K,f) & \text{in case (a)},\\
        \W(\K,f)\{\mu,\kappa\} & \text{in case (b)}.
    \end{cases}
    \]
\end{lemma}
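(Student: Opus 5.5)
The argument has three steps: show that $w'$ agrees with $w$ on pairs with $f(\sigma)>f(\tau)$, compare $U^\snc$ and $D^\snc$ cell by cell, and then identify the weakly critical cells and incidences of $\W(\K',f)$.

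\emph{Step 1: $w'=w$ on pairs with $f(\sigma)>f(\tau)$.} Let $\tau,\sigma\in\K'$ with $f(\sigma)>f(\tau)$. Suppose the correction term $w(\tau,\kappa)w(\mu,\sigma)/w(\mu,\kappa)$ were nonzero. Lemma~\ref{lem:cancelvalue} applies with $\beta=\tau$ and $\alpha=\sigma$. It gives $f(\kappa)\le f(\tau)$ and $f(\sigma)\le f(\mu)$.

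In case (a) we have $f(\mu)<f(\kappa)$, so $f(\sigma)\le f(\mu)<f(\kappa)\le f(\tau)$. In case (b) we have $f(\mu)=f(\kappa)$, so $f(\sigma)\le f(\tau)$. Both contradict $f(\sigma)>f(\tau)$. Hence the correction term vanishes and $w'(\tau,\sigma)=w(\tau,\sigma)$.

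\emph{Step 2: $U^\snc$ and $D^\snc$ are unchanged.} Fix $\sigma\in\K'$. The set counted by $U^\snc_{\K',f}(\sigma)$ consists of the $\tau\in\K'$ with $f(\tau)<f(\sigma)$ and $w'(\tau,\sigma)\ne0$. By Step 1 this is the set of $\tau\in\K'$ with $f(\tau)<f(\sigma)$ and $w(\tau,\sigma)\ne0$. It remains to see that no removed cell is counted in $U^\snc_{\K,f}(\sigma)$.

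\begin{itemize}
\item If $\tau=\mu$ and $w(\mu,\sigma)\ne0$, Lemma~\ref{lem:cancelvalue} gives $f(\sigma)\le f(\mu)$, so $\mu$ is not counted.
\item If $\tau=\kappa$ and $w(\kappa,\sigma)\ne0$, Lemma~\ref{lem:cancelvalue} gives $f(\sigma)\le f(\kappa)$, so $\kappa$ is not counted.
\end{itemize}

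Hence $U^\snc_{\K',f}(\sigma)=U^\snc_{\K,f}(\sigma)$. The statement for $D^\snc$ is symmetric, using the $\beta$-implications of Lemma~\ref{lem:cancelvalue}. It follows that $f$ is a discrete Morse-Bott function on $\K'$.

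\emph{Step 3: the weakly critical complex.} By Step 2, the weakly critical cells of $f$ on $\K'$ are exactly the weakly critical cells of $\K$ other than $\mu$ and $\kappa$.

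In case (a), $U^\snc(\kappa)=1$ and $D^\snc(\mu)=1$, so neither $\mu$ nor $\kappa$ is weakly critical. The cell sets of $\W(\K',f)$ and $\W(\K,f)$ therefore coincide. For incidences, Definition~\ref{def:wccpx} only involves pairs with $f(\tau)=f(\sigma)$. For such $\tau,\sigma\in\W$, a nonzero correction term would, by Lemma~\ref{lem:cancelvalue}, force $f(\sigma)\le f(\mu)<f(\kappa)\le f(\tau)$. This is impossible when $f(\tau)=f(\sigma)$, so $w'=w$ on these pairs and the incidence functions agree.

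In case (b), $\mu$ and $\kappa$ are cells of $\W(\K,f)$ with $w^\W(\mu,\kappa)=w(\mu,\kappa)\ne0$, so $\W(\K,f)\{\mu,\kappa\}$ is defined. Its cells are those of $\W(\K,f)$ minus $\mu$ and $\kappa$, which are the cells of $\W(\K',f)$. Fix $\tau,\sigma$ in it.

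\begin{itemize}
\item If $f(\tau)=f(\sigma)$, the incidence in $\W(\K',f)$ is $w'(\tau,\sigma)$. The incidence in $\W(\K,f)\{\mu,\kappa\}$ is
\[
w(\tau,\sigma)-\frac{w^\W(\tau,\kappa)w^\W(\mu,\sigma)}{w(\mu,\kappa)}.
\]
The superscripts can be dropped: if $w(\tau,\kappa)w(\mu,\sigma)\ne0$, Lemma~\ref{lem:cancelvalue} gives $f(\kappa)\le f(\tau)=f(\sigma)\le f(\mu)=f(\kappa)$. So all four values coincide and the two incidences agree.
\item If $f(\tau)\ne f(\sigma)$, the incidence in $\W(\K',f)$ is $0$. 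The incidence in the quotient is $w^\W(\tau,\sigma)=0$ minus a correction term. A nonzero correction term needs $f(\tau)=f(\kappa)$ and $f(\mu)=f(\sigma)$, hence $f(\tau)=f(\sigma)$, a contradiction. So it is also $0$.
\end{itemize}

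The main obstacle is this last bookkeeping in case (b): one must match the correction term computed in $\W(\K,f)$ against the one computed in $\K$. The equalities of $f$-values forced by Lemma~\ref{lem:cancelvalue} are what settle it.
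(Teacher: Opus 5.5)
Your proof is correct and follows essentially the same route as the paper. Like the paper, you show the correction term vanishes using the inequalities of Lemma~\ref{lem:cancelvalue}, deduce that $U^\snc$ and $D^\snc$ are unchanged because $\mu$ and $\kappa$ are never counted, and then compare the incidence functions of the weakly critical complexes case by case; the only cosmetic differences are that you split Step 1 into cases (a) and (b) where the paper uses $f(\mu)\le f(\kappa)$ uniformly, and you organize case (b) by whether $f(\tau)=f(\sigma)$ rather than by the paper's three-way split with $c=f(\mu)=f(\kappa)$.
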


\begin{proof}
    For $\tau,\sigma\in\K'$, put
    \[
    w_1(\tau,\sigma):=\frac{w(\tau,\kappa)w(\mu,\sigma)}{w(\mu,\kappa)}.
    \]
    By Definition~\ref{def:quotient}, $w'(\tau,\sigma)=w(\tau,\sigma)-w_1(\tau,\sigma)$.

    In case (a) we have $f(\mu)<f(\kappa)$, and in case (b) we have $f(\mu)=f(\kappa)$. In
    particular the two cases are mutually exclusive, and $f(\mu)\le f(\kappa)$ holds in both.

    Let $\tau,\sigma\in\K'$ with $f(\sigma)>f(\tau)$, and suppose that
    $w_1(\tau,\sigma)\neq0$. Then $w(\tau,\kappa)\neq0$ and $w(\mu,\sigma)\neq0$, so
    Lemma~\ref{lem:cancelvalue} and $f(\mu)\le f(\kappa)$ give
    \[
    f(\tau)\ge f(\kappa)\ge f(\mu)\ge f(\sigma)>f(\tau),
    \]
    which is a contradiction. Hence $w_1(\tau,\sigma)=0$ and $w'(\tau,\sigma)=w(\tau,\sigma)$,
    which proves the first assertion.

    Let $\sigma\in\K'$. By the first assertion,
    \[
    \{\tau\in\K'\mid w'(\tau,\sigma)\neq0,\ f(\sigma)>f(\tau)\}
    =\{\tau\in\K'\mid w(\tau,\sigma)\neq0,\ f(\sigma)>f(\tau)\},
    \]
    and by Lemma~\ref{lem:cancelvalue} neither $\mu$ nor $\kappa$ lies in
    $\{\tau\in\K\mid w(\tau,\sigma)\neq0,\ f(\sigma)>f(\tau)\}$, so the latter set coincides
    with the former one. Hence $U^\snc_{\K',f}(\sigma)=U^\snc_{\K,f}(\sigma)$, and the same
    argument gives $D^\snc_{\K',f}(\sigma)=D^\snc_{\K,f}(\sigma)$. Therefore $f$ is a
    discrete Morse-Bott function on $\K'$, and a cell of $\K'$ is weakly critical for $f$ on
    $\K'$ if and only if it is weakly critical for $f$ on $\K$.
    
    Denote by $(w')^\W$ the incidence function of $\W(\K',f)$ and by $w^\W$ the incidence
    function of $\W(\K,f)$. Let $\tau,\sigma$ be cells of $\W(\K',f)$. By
    Definition~\ref{def:wccpx},
    \begin{align*}
    (w')^\W(\tau,\sigma)&=
    \begin{cases}
        w(\tau,\sigma)-w_1(\tau,\sigma) & \text{if } f(\tau)=f(\sigma),\\
        0 & \text{otherwise},
    \end{cases}\\
    w^\W(\tau,\sigma)&=
    \begin{cases}
        w(\tau,\sigma) & \text{if } f(\tau)=f(\sigma),\\
        0 & \text{otherwise}.
    \end{cases}
    \end{align*}

    In case (a), the cells $\mu$ and $\kappa$ are not weakly critical, so $\W(\K,f)$ and
    $\W(\K',f)$ have the same cells, and their dimension functions agree. If
    $w_1(\tau,\sigma)\neq0$, then Lemma~\ref{lem:cancelvalue} and $f(\mu)<f(\kappa)$ give
    $f(\tau)\ge f(\kappa)>f(\mu)\ge f(\sigma)$, so $f(\tau)\neq f(\sigma)$. Comparing the two
    displays above, $(w')^\W$ and $w^\W$ agree. Hence $\W(\K',f)=\W(\K,f)$.

    In case (b), put $c:=f(\mu)=f(\kappa)$. The cells $\mu$ and $\kappa$ are weakly critical,
    hence are cells of $\W(\K,f)$, and $w^\W(\mu,\kappa)=w(\mu,\kappa)\neq0$ by
    Definition~\ref{def:wccpx}. Thus the Lefschetz complex $\W(\K,f)\{\mu,\kappa\}$ is
    defined, and its cells are the cells of $\W(\K,f)$ other than $\mu$ and $\kappa$, that is,
    the cells of $\W(\K',f)$. Their dimension functions agree as well. Denote by $(w^\W)'$ the
    incidence function of $\W(\K,f)\{\mu,\kappa\}$ and put
    \[
    w_1^\W(\tau,\sigma):=\frac{w^\W(\tau,\kappa)w^\W(\mu,\sigma)}{w^\W(\mu,\kappa)}.
    \]
    By Definition~\ref{def:quotient} applied to $\W(\K,f)$ and by
    Definition~\ref{def:wccpx},
    \[
    (w^\W)'(\tau,\sigma)=w^\W(\tau,\sigma)-w_1^\W(\tau,\sigma)=
    \begin{cases}
        w(\tau,\sigma)-w_1(\tau,\sigma) & \text{if } f(\tau)=f(\sigma)=c,\\
        w(\tau,\sigma) & \text{if } f(\tau)=f(\sigma)\neq c,\\
        0 & \text{if } f(\tau)\neq f(\sigma).
    \end{cases}
    \]
    Comparing this with the display for $(w')^\W$, it suffices to show that
    $f(\tau)=f(\sigma)$ and $w_1(\tau,\sigma)\neq0$ imply $f(\tau)=f(\sigma)=c$. If
    $w_1(\tau,\sigma)\neq0$, then Lemma~\ref{lem:cancelvalue} gives
    $f(\tau)\ge f(\kappa)=f(\mu)\ge f(\sigma)$, and $f(\tau)=f(\sigma)$ forces
    $f(\tau)=f(\sigma)=c$. Hence $\W(\K',f)=\W(\K,f)\{\mu,\kappa\}$.
\end{proof}

\begin{proposition}[Move~(I)]\label{prp:move1}
    Let $\mu,\kappa\in\K$ with $w(\mu,\kappa)\neq0$. Assume that either
    \begin{enumerate}[(a)]
        \item $\kappa\to_f\mu$, or
        \item $\mu$ and $\kappa$ are weakly critical and $f(\mu)=f(\kappa)$.
    \end{enumerate}
    Then $f$ is a discrete Morse-Bott function on $\K\{\mu,\kappa\}$,
    \[
    P_t\bigl(\K\{\mu,\kappa\}\bigr)=P_t(\K),\qquad
    P_t\bigl(\W(\K\{\mu,\kappa\},f)\bigr)=P_t\bigl(\W(\K,f)\bigr),
    \]
    and consequently $(\K,f)\sim(\K\{\mu,\kappa\},f)$.
\end{proposition}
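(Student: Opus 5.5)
The plan is to assemble the statement from the lemmas already established about the cancelled complex $\K':=\K\{\mu,\kappa\}$. By Proposition~\ref{prp:quotientLef}, $\K'$ is a Lefschetz complex. By Lemma~\ref{lem:cancelwc}, $U^\snc_{\K',f}$ and $D^\snc_{\K',f}$ agree with $U^\snc_{\K,f}$ and $D^\snc_{\K,f}$ on $\K'$, so $f$ is a discrete Morse-Bott function on $\K'$. This settles the first assertion.

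For the equality $P_t(\K')=P_t(\K)$, I would invoke Lemma~\ref{lem:ChHoMuKappa}, which gives a chain homotopy equivalence between $C_\bullet(\K)$ and $C_\bullet(\K')$. Lemma~\ref{lem:rankhom} then gives $b_k^{\K}=b_k^{\K'}$ for every $k$, and summing over $k$ yields the equality of Poincar\'e series.

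For the weakly critical complexes, I would split into the two cases of Lemma~\ref{lem:cancelwc}.
\begin{itemize}
\item In case (a), $\W(\K',f)=\W(\K,f)$, so there is nothing to prove.
\item In case (b), $\W(\K',f)=\W(\K,f)\{\mu,\kappa\}$. Here $\mu,\kappa$ are cells of the Lefschetz complex $\W(\K,f)$ with $w^\W(\mu,\kappa)=w(\mu,\kappa)\neq0$. Lemma~\ref{lem:ChHoMuKappa} therefore applies with $\W(\K,f)$ in place of $\K$, and combined with Lemma~\ref{lem:rankhom} it gives $P_t(\W(\K,f)\{\mu,\kappa\})=P_t(\W(\K,f))$.
\end{itemize}
The only point needing care is that Lemma~\ref{lem:ChHoMuKappa} is stated for an arbitrary Lefschetz complex and an arbitrary pair with nonzero incidence. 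That is true of its statement, so reapplying it to $\W(\K,f)$ is legitimate.

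Finally, since both Poincar\'e series are unchanged, the difference $P_t(\W(\cdot,f))-P_t(\cdot)$ is the same for $(\K,f)$ and $(\K',f)$. Remark~\ref{rem:Ptcriterion} then yields $(\K,f)\sim(\K',f)$. I do not expect a genuine obstacle: the substantive work (the identification of $\W(\K',f)$ and the invariance of the strict counts) is already contained in Lemma~\ref{lem:cancelwc}. The main thing to check is that in case (b) the quotient of $\W(\K,f)$ is taken with respect to its own incidence function, which Lemma~\ref{lem:cancelwc} already handles.
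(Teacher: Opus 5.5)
Your proposal is correct and follows essentially the same route as the paper's proof. Both use Lemma~\ref{lem:cancelwc} for the Morse-Bott property and the identification of $\W(\K\{\mu,\kappa\},f)$, and Lemmas~\ref{lem:ChHoMuKappa} and~\ref{lem:rankhom} applied to $\K$ (and, in case~(b), to $\W(\K,f)$). Both then conclude with Remark~\ref{rem:Ptcriterion}.
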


\begin{proof}
    By Lemma~\ref{lem:cancelwc}, $f$ is a discrete Morse-Bott function on
    $\K\{\mu,\kappa\}$. Moreover,
    \[
    \W(\K\{\mu,\kappa\},f)=
    \begin{cases}
        \W(\K,f) & \text{in case (a)},\\
        \W(\K,f)\{\mu,\kappa\} & \text{in case (b)}.
    \end{cases}
    \]
    By Lemmas~\ref{lem:ChHoMuKappa} and~\ref{lem:rankhom},
    $P_t(\K\{\mu,\kappa\})=P_t(\K)$. In case (a), the second equality follows
    immediately from the displayed formula. In case (b), the same two lemmas applied
    to $\W(\K,f)$ give
    \[
    P_t\bigl(\W(\K,f)\{\mu,\kappa\}\bigr)=P_t\bigl(\W(\K,f)\bigr),
    \]
    and the second equality again follows from the displayed formula. The last
    assertion follows from Remark~\ref{rem:Ptcriterion}.
\end{proof}

\begin{remark}\label{rem:move1coeff}
    In the situation of Proposition~\ref{prp:move1}, the relation
    $(\K,f)\sim(\K\{\mu,\kappa\},f)$ can also be verified directly from the definition of
    $R_t$, rather than through the criterion of Remark~\ref{rem:Ptcriterion}. Let
    $k\in\Z_{\ge0}$. By the definition of $R_t$,
    \[
    R_t(\K,f)_k=\rank B_k(\K)-d^\snc_{k+1}(\K,f)-\rank B_k\bigl(\W(\K,f)\bigr),
    \]
    and the same formula holds for $(\K\{\mu,\kappa\},f)$. We compare the three terms in the
    passage from $(\K,f)$ to $(\K\{\mu,\kappa\},f)$. By (L1) we have $\dim\mu=\dim\kappa+1$,
    and Lemma~\ref{lem:rankcancel} shows that the first term drops by $1$ if $k=\dim\kappa$
    and is unchanged otherwise. By Lemma~\ref{lem:cancelwc} the number $D^\snc(\sigma)$ is
    unchanged for every cell $\sigma$ of $\K\{\mu,\kappa\}$, so the second term changes only
    through the two removed cells.

    In case (a) we have $f(\mu)<f(\kappa)$, so $\kappa$ is counted in $D^\snc(\mu)$ and $\mu$
    is counted in $U^\snc(\kappa)$. Hence $D^\snc(\mu)=1$, and $D^\snc(\kappa)=0$ by
    Lemma~\ref{lem:MBone}. The second term therefore drops by $1$ if $k+1=\dim\mu$, that is,
    if $k=\dim\kappa$, and is unchanged otherwise. The third term is unchanged, since
    $\W(\K\{\mu,\kappa\},f)=\W(\K,f)$ by Lemma~\ref{lem:cancelwc}. Hence the changes in the
    first and second terms cancel.

    In case (b) the cells $\mu$ and $\kappa$ are weakly critical, so
    $D^\snc(\mu)=D^\snc(\kappa)=0$ and the second term is unchanged. By
    Lemma~\ref{lem:cancelwc} we have $\W(\K\{\mu,\kappa\},f)=\W(\K,f)\{\mu,\kappa\}$, so
    Lemma~\ref{lem:rankcancel} applied to $\W(\K,f)$ shows that the third term drops by $1$
    if $k=\dim\kappa$ and is unchanged otherwise. Hence the changes in the first and third
    terms cancel.

    In both cases $R_t(\K,f)_k=R_t(\K\{\mu,\kappa\},f)_k$, and hence
    $R_t(\K,f)=R_t(\K\{\mu,\kappa\},f)$.
\end{remark}

\begin{example}
Figure~\ref{fig:move1} illustrates Move~(I), case (a). The left diagram shows the same
Lefschetz complex $\K$ as in Figure~\ref{fig:quotient}, whose cells are $\nu_i$,
$\sigma_i$, $\tau_i$ and $\alpha_i$ for $i\in\{1,2\}$. The second diagram displays a
discrete Morse-Bott function $f$ on $\K$ together with its strict gradient vector field
$-\nabla_s f$, which consists of the two pairs $(\tau_1,\sigma_1)$ and
$(\alpha_2,\tau_2)$. Since $\sigma_1\to_f\tau_1$, Proposition~\ref{prp:move1}, case (a),
applies with $\mu=\tau_1$ and $\kappa=\sigma_1$. The third diagram shows the triple
$\K\{\tau_1,\sigma_1\}$ of Definition~\ref{def:quotient}, on which $f$ is again a
discrete Morse-Bott function. As in Figure~\ref{fig:quotient}, the correction of the
incidence numbers makes the incidence number of the pair $(\tau_2,\sigma_2)$ zero. By
Remark~\ref{rem:quotientinc}, the value of the incidence function at
$(\alpha_2,\tau_2)$ is
unchanged, so $\tau_2\to_f\alpha_2$ still holds in $\K\{\tau_1,\sigma_1\}$ and
Proposition~\ref{prp:move1}, case (a), applies once more with $\mu=\alpha_2$ and
$\kappa=\tau_2$. The rightmost diagram shows the resulting triple
$\K\{\tau_1,\sigma_1\}\{\alpha_2,\tau_2\}$, whose cells are $\nu_1$, $\nu_2$, $\sigma_2$
and $\alpha_1$.
\end{example}

\begin{figure}[htbp]
\centering
\begin{tikzpicture}[scale=0.9]

    \begin{scope}[shift={(-3.5,0)}]
      \node (a) at (0,0) {$\nu_1$};
      \node (b) at (0,1.5) {$\sigma_1$};
      \node (c) at (0,3) {$\tau_1$};
      \node (d) at (2,0) {$\nu_2$};
      \node (e) at (2,1.5) {$\sigma_2$};
      \node (f) at (2,3) {$\tau_2$};

      \node (m) at (0,4.5) {$\alpha_1$};
      \node (n) at (2,4.5) {$\alpha_2$};

      \draw (a) -- (b) node[near end, left] {$1$};
      \draw (b) -- (c) node[near end, left] {$1$};
      \draw (d) -- (e) node[near end, right] {$-1$};
      \draw (e) -- (f) node[near end, right] {$-1$};
      \draw (a) -- (e) node[near end, above] {$1$};
      \draw (d) -- (b) node[near end, above] {$-1$};
      \draw (e) -- (c) node[near end, above] {$-1$};
      \draw (b) -- (f) node[near end, above] {$1$};

      \draw (c) -- (m) node[near end, left] {$1$};
      \draw (f) -- (n) node[near end, right] {$-1$};
      \draw (f) -- (m) node[near end, above] {$-1$};
      \draw (c) -- (n) node[near end, above] {$1$};

      \node at (1,-0.5) {$\K$};
    \end{scope}

    \begin{scope}
      \node (a) at (0,0) {$1$};
      \node (b) at (0,1.5) {$2$};
      \node (c) at (0,3) {$1$};
      \node (d) at (2,0) {$1$};
      \node (e) at (2,1.5) {$1$};
      \node (f) at (2,3) {$3$};

      \node (m) at (0,4.5) {$3$};
      \node (n) at (2,4.5) {$2$};

      \draw (a) -- (b);
      \draw[->,red,thick] (b) -- (c);
      \draw (d) -- (e);
      \draw (e) -- (f);
      \draw (a) -- (e);
      \draw (d) -- (b);
      \draw (e) -- (c);
      \draw (b) -- (f);

      \draw (c) -- (m);
      \draw[->,red,thick] (f) -- (n);
      \draw (f) -- (m);
      \draw (c) -- (n);

      \node at (1,-0.5) {$f,~-\nabla_s f$};
    \end{scope}

    \begin{scope}[shift={(3.5,0)}]
      \node (a) at (0,0) {$1$};
      \node (d) at (2,0) {$1$};
      \node (e) at (2,1.5) {$1$};
      \node (f) at (2,3) {$3$};

      \node (m) at (0,4.5) {$3$};
      \node (n) at (2,4.5) {$2$};

      \draw (d) -- (e) node[near end, right] {$-1$};
      \draw (a) -- (e) node[near end, above] {$1$};

      \draw (f) -- (n) node[near end, right] {$-1$};
      \draw (f) -- (m) node[near end, above] {$-1$};

      \node at (1,-0.5) {$\K\{\tau_1,\sigma_1\},~f$};
    \end{scope}

    \begin{scope}[shift={(7,0)}]
      \node (a) at (0,0) {$1$};
      \node (d) at (2,0) {$1$};
      \node (e) at (2,1.5) {$1$};

      \node (m) at (0,4.5) {$3$};

      \draw (d) -- (e) node[near end, right] {$-1$};
      \draw (a) -- (e) node[near end, above] {$1$};

      \node at (1,-0.5) {$\K\{\tau_1,\sigma_1\}\{\alpha_2,\tau_2\},~f$};
    \end{scope}

\end{tikzpicture}

\caption{Four diagrams illustrating Move~(I), case (a): a Lefschetz complex $\K$, a
discrete Morse-Bott function $f$ on $\K$ with its strict gradient vector field
$-\nabla_s f$, the triple $\K\{\tau_1,\sigma_1\}$, and the triple
$\K\{\tau_1,\sigma_1\}\{\alpha_2,\tau_2\}$.}
\label{fig:move1}
\end{figure}

\begin{example}
Figure~\ref{fig:move1-weak} illustrates Move~(I), case (b). The left diagram shows the
same Lefschetz complex $\K$ as in Figure~\ref{fig:move1}, and the second diagram displays
a discrete Morse-Bott function $f$ on $\K$. Every cell of $\K$ is weakly critical for
$f$, so the strict gradient vector field $-\nabla_s f$ is empty. The cells $\tau_1$ and
$\sigma_1$ satisfy $w(\tau_1,\sigma_1)\neq0$ and $f(\tau_1)=f(\sigma_1)$, so
Proposition~\ref{prp:move1}, case (b), applies with $\mu=\tau_1$ and $\kappa=\sigma_1$.
The third diagram shows the triple $\K\{\tau_1,\sigma_1\}$, on which $f$ is again a
discrete Morse-Bott function. As in Figure~\ref{fig:move1}, the correction of the
incidence numbers makes the incidence number of the pair $(\tau_2,\sigma_2)$ zero. By
Lemma~\ref{lem:cancelwc} the cells $\alpha_2$ and $\tau_2$ are still weakly critical for
$f$ on $\K\{\tau_1,\sigma_1\}$, and by Remark~\ref{rem:quotientinc}, the value of the
incidence function at $(\alpha_2,\tau_2)$ is unchanged. Since
$f(\alpha_2)=f(\tau_2)$, Proposition~\ref{prp:move1}, case
(b), applies once more with $\mu=\alpha_2$ and $\kappa=\tau_2$. The rightmost diagram
shows the resulting triple $\K\{\tau_1,\sigma_1\}\{\alpha_2,\tau_2\}$, whose cells are
$\nu_1$, $\nu_2$, $\sigma_2$ and $\alpha_1$.
\end{example}

\begin{figure}[htbp]
\centering
\begin{tikzpicture}[scale=0.9]

    \begin{scope}[shift={(-3.5,0)}]
      \node (a) at (0,0) {$\nu_1$};
      \node (b) at (0,1.5) {$\sigma_1$};
      \node (c) at (0,3) {$\tau_1$};
      \node (d) at (2,0) {$\nu_2$};
      \node (e) at (2,1.5) {$\sigma_2$};
      \node (f) at (2,3) {$\tau_2$};

      \node (m) at (0,4.5) {$\alpha_1$};
      \node (n) at (2,4.5) {$\alpha_2$};

      \draw (a) -- (b) node[near end, left] {$1$};
      \draw (b) -- (c) node[near end, left] {$1$};
      \draw (d) -- (e) node[near end, right] {$-1$};
      \draw (e) -- (f) node[near end, right] {$-1$};
      \draw (a) -- (e) node[near end, above] {$1$};
      \draw (d) -- (b) node[near end, above] {$-1$};
      \draw (e) -- (c) node[near end, above] {$-1$};
      \draw (b) -- (f) node[near end, above] {$1$};

      \draw (c) -- (m) node[near end, left] {$1$};
      \draw (f) -- (n) node[near end, right] {$-1$};
      \draw (f) -- (m) node[near end, above] {$-1$};
      \draw (c) -- (n) node[near end, above] {$1$};

      \node at (1,-0.5) {$\K$};
    \end{scope}

    \begin{scope}
      \node (a) at (0,0) {$1$};
      \node (b) at (0,1.5) {$2$};
      \node (c) at (0,3) {$2$};
      \node (d) at (2,0) {$1$};
      \node (e) at (2,1.5) {$1$};
      \node (f) at (2,3) {$3$};

      \node (m) at (0,4.5) {$3$};
      \node (n) at (2,4.5) {$3$};

      \draw (a) -- (b);
      \draw (b) -- (c);
      \draw (d) -- (e);
      \draw (e) -- (f);
      \draw (a) -- (e);
      \draw (d) -- (b);
      \draw (e) -- (c);
      \draw (b) -- (f);

      \draw (c) -- (m);
      \draw (f) -- (n);
      \draw (f) -- (m);
      \draw (c) -- (n);

      \node at (1,-0.5) {$f,~-\nabla_s f=\emptyset$};
    \end{scope}

    \begin{scope}[shift={(3.5,0)}]
      \node (a) at (0,0) {$1$};
      \node (d) at (2,0) {$1$};
      \node (e) at (2,1.5) {$1$};
      \node (f) at (2,3) {$3$};

      \node (m) at (0,4.5) {$3$};
      \node (n) at (2,4.5) {$3$};

      \draw (d) -- (e) node[near end, right] {$-1$};
      \draw (a) -- (e) node[near end, above] {$1$};

      \draw (f) -- (n) node[near end, right] {$-1$};
      \draw (f) -- (m) node[near end, above] {$-1$};

      \node at (1,-0.5) {$\K\{\tau_1,\sigma_1\},~f$};
    \end{scope}

    \begin{scope}[shift={(7,0)}]
      \node (a) at (0,0) {$1$};
      \node (d) at (2,0) {$1$};
      \node (e) at (2,1.5) {$1$};

      \node (m) at (0,4.5) {$3$};

      \draw (d) -- (e) node[near end, right] {$-1$};
      \draw (a) -- (e) node[near end, above] {$1$};

      \node at (1,-0.5) {$\K\{\tau_1,\sigma_1\}\{\alpha_2,\tau_2\},~f$};
    \end{scope}

\end{tikzpicture}

\caption{Four diagrams illustrating Move~(I), case (b): a Lefschetz complex $\K$, a
discrete Morse-Bott function $f$ on $\K$ for which every cell is weakly critical, the
triple $\K\{\tau_1,\sigma_1\}$, and the triple
$\K\{\tau_1,\sigma_1\}\{\alpha_2,\tau_2\}$.}
\label{fig:move1-weak}
\end{figure}

\begin{lemma}\label{lem:removeisolated}
    A cell $\eta\in\K$ is \textbf{isolated} in $\K$ if $w(\sigma,\eta)=w(\eta,\sigma)=0$ for
    all $\sigma\in\K$. For a cell $\eta$ isolated in $\K$, write $X':=X\setminus\{\eta\}$
    and define the triple $\K\{\eta\}:=(X',\dim|_{X'},w|_{X'\times X'})$. Then $\K\{\eta\}$ is
    a Lefschetz complex and
    \[
    P_t\bigl(\K\{\eta\}\bigr)=P_t(\K)-t^{\dim\eta}.
    \]
\end{lemma}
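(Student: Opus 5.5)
The argument splits into two parts: first that $\K\{\eta\}$ is a Lefschetz complex, then the comparison of Poincar\'{e} series.

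For the first part we use Proposition~\ref{prp:subcpx}: since $\K\{\eta\}$ is $X\setminus\{\eta\}$ with the restricted dimension and incidence functions, it suffices to verify (L2). Fix $\tau,\nu\in X'$. The sum $\sum_{\sigma\in X}w(\tau,\sigma)w(\sigma,\nu)$ vanishes by (L2) for $\K$. The only term not present in the sum over $X'$ is the one with $\sigma=\eta$, and that term is $w(\tau,\eta)w(\eta,\nu)=0$ because $\eta$ is isolated. Hence the sum over $X'$ also vanishes.

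For the second part, the cleanest route is a disjoint-union decomposition. Let $\{\eta\}$ denote the one-cell Lefschetz complex with zero incidence function. Because every incidence number involving $\eta$ is zero, the incidence function of $\K$ agrees with that of $\K\{\eta\}\sqcup\{\eta\}$: it is $w$ on $X'\times X'$ and $0$ whenever one argument is $\eta$. Thus $\K=\K\{\eta\}\sqcup\{\eta\}$ as Lefschetz complexes. Lemma~\ref{lem:disjointsum} then gives
\[
P_t(\K)=P_t(\K\{\eta\})+P_t(\{\eta\}).
\]
The one-cell complex has trivial boundary maps, so its only nonzero homology is $H_{\dim\eta}=\R$, giving $P_t(\{\eta\})=t^{\dim\eta}$. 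Rearranging yields the claimed formula.

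No step here is a real obstacle. The only point needing care is checking that the identification $\K=\K\{\eta\}\sqcup\{\eta\}$ holds on the nose. Alternatively, one can argue directly: $\partial$ kills $\eta$ and $\eta$ never appears in any boundary. Consequently $C_\bullet(\K)=C_\bullet(\K\{\eta\})\oplus\R\eta$ as chain complexes, so $b_k$ increases by exactly $1$ at $k=\dim\eta$ and is unchanged in all other degrees.
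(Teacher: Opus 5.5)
Your proposal is correct and follows the paper's proof essentially verbatim. You verify (L2) via Proposition~\ref{prp:subcpx} by noting that the only omitted term $w(\tau,\eta)w(\eta,\nu)$ vanishes, then identify $\K=\K\{\eta\}\sqcup(\{\eta\},\dim|_{\{\eta\}},0)$ and apply Lemma~\ref{lem:disjointsum} with $P_t$ of the one-cell complex equal to $t^{\dim\eta}$.
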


\begin{proof}
    Fix $\tau,\nu\in X'$. The only term omitted from the sum in (L2) for $\K$ is
    \[
    w(\tau,\eta)w(\eta,\nu),
    \]
    which vanishes. Hence $X'$ satisfies (L2). It follows from
    Proposition~\ref{prp:subcpx} that $\K\{\eta\}$ is a Lefschetz complex. The triple
    $(\{\eta\},\dim|_{\{\eta\}},0)$ is also a Lefschetz complex, with zero incidence
    function. Since $w$ vanishes between $\eta$ and every other cell, we have
    \[
    \K=\K\{\eta\}\sqcup(\{\eta\},\dim|_{\{\eta\}},0).
    \]
    The only nonzero chain group of $(\{\eta\},\dim|_{\{\eta\}},0)$ has rank $1$,
    and its boundary homomorphism vanishes. Thus, the Poincar\'{e} series of this
    complex is $t^{\dim\eta}$. Lemma~\ref{lem:disjointsum} now gives the asserted
    equality.
\end{proof}

\begin{proposition}[Move~(II)]\label{prp:move2}
    Let $\eta$ be a cell isolated in $\K$. Then $f$ is a discrete Morse-Bott function on
    $\K\{\eta\}$,
    \[
    P_t\bigl(\K\{\eta\}\bigr)=P_t(\K)-t^{\dim\eta},\qquad
    P_t\bigl(\W(\K\{\eta\},f)\bigr)=P_t\bigl(\W(\K,f)\bigr)-t^{\dim\eta},
    \]
    and consequently $(\K,f)\sim(\K\{\eta\},f)$.
\end{proposition}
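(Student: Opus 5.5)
The plan is to reduce everything to the disjoint union decomposition
\[
\K=\K\{\eta\}\sqcup(\{\eta\},\dim|_{\{\eta\}},0)
\]
established in the proof of Lemma~\ref{lem:removeisolated}, and then apply Lemma~\ref{lem:Rtdisjoint}.

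First I check that $f$ is a discrete Morse-Bott function on each summand. On $\K\{\eta\}$ the incidence function is the restriction of $w$. Since $\eta$ has no nonzero incidence with any cell, $U^\snc_{\K\{\eta\},f}(\sigma)=U^\snc_{\K,f}(\sigma)$ and $D^\snc_{\K\{\eta\},f}(\sigma)=D^\snc_{\K,f}(\sigma)$ for every $\sigma\in\K\{\eta\}$, so both are at most $1$. On the one-cell complex $\{\eta\}$ both counts vanish trivially. Lemma~\ref{lem:Rtdisjoint} then says that $f$ is Morse-Bott on $\K$ (which we already know) and gives
\[
\W(\K,f)=\W(\K\{\eta\},f)\sqcup\W(\{\eta\},f).
\]

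Next I compute $\W(\{\eta\},f)$. The cell $\eta$ has $U^\snc(\eta)=D^\snc(\eta)=0$, so it is weakly critical. Hence $\W(\{\eta\},f)$ is the one-cell complex with zero incidence, and its Poincar\'e series is $t^{\dim\eta}$. Lemma~\ref{lem:disjointsum} applied to the displayed decomposition of $\W(\K,f)$ yields $P_t(\W(\K,f))=P_t(\W(\K\{\eta\},f))+t^{\dim\eta}$, which is the second equality. The first equality is exactly Lemma~\ref{lem:removeisolated}.

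Finally, subtracting the two equalities shows that $P_t(\W(\K,f))-P_t(\K)=P_t(\W(\K\{\eta\},f))-P_t(\K\{\eta\})$. Remark~\ref{rem:Ptcriterion} then gives $(\K,f)\sim(\K\{\eta\},f)$.

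There is no real obstacle here. The only point needing care is that the weakly critical status of cells in $\K\{\eta\}$ is unchanged by removing $\eta$, and this follows from the equality of $U^\snc$ and $D^\snc$ already noted.
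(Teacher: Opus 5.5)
Your proof is correct and follows essentially the same argument as the paper's. The difference is only in packaging. You obtain the splitting $\W(\K,f)=\W(\K\{\eta\},f)\sqcup\W(\{\eta\},f)$ from Lemma~\ref{lem:Rtdisjoint} applied to $\K=\K\{\eta\}\sqcup(\{\eta\},\dim|_{\{\eta\}},0)$. The paper instead verifies $\W(\K\{\eta\},f)=\W(\K,f)\{\eta\}$ directly from Definition~\ref{def:wccpx}, observes that $\eta$ is isolated in $\W(\K,f)$, and applies Lemma~\ref{lem:removeisolated} a second time, to $\W(\K,f)$.
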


\begin{proof}
    The cell $\eta$ is counted neither in $U^\snc(\sigma)$ nor in $D^\snc(\sigma)$ for any
    cell $\sigma$, so these two numbers are the same whether computed in $\K$ or in
    $\K\{\eta\}$. Hence $f$ is a discrete Morse-Bott function on $\K\{\eta\}$, and
    $U^\snc(\eta)=D^\snc(\eta)=0$ shows that $\eta$ is weakly critical. The complexes
    $\W(\K\{\eta\},f)$ and $\W(\K,f)\{\eta\}$ therefore have the same cells and their
    dimension functions agree. Their incidence functions agree as well, since by
    Definition~\ref{def:wccpx} both assign to a pair $\tau,\sigma$ the number $w(\tau,\sigma)$
    if $f(\tau)=f(\sigma)$ and $0$ otherwise. Hence
    $\W(\K\{\eta\},f)=\W(\K,f)\{\eta\}$. Since the incidence function of $\W(\K,f)$ takes only
    the values of $w$ and $0$, the cell $\eta$ is isolated in $\W(\K,f)$, so
    Lemma~\ref{lem:removeisolated} applied to $\K$ and to $\W(\K,f)$ gives the two displayed
    equalities, and the last assertion follows from Remark~\ref{rem:Ptcriterion}.
\end{proof}

\begin{example}
Figure~\ref{fig:move2} illustrates Move~(II). The left diagram shows a Lefschetz complex
$\K$ whose cells are $\nu_1$, $\nu_2$, $\sigma_2$, $\tau_2$, $\alpha_1$ and $\alpha_2$, and
the second diagram displays a discrete Morse-Bott function $f$ on $\K$. Every cell of $\K$
is weakly critical for $f$, so the strict gradient vector field $-\nabla_s f$ is empty. The
cell $\alpha_1$ is isolated in $\K$, so Lemma~\ref{lem:removeisolated} applies with
$\eta=\alpha_1$ and gives the triple $\K\{\alpha_1\}$ shown on the right, on which $f$ is
again a discrete Morse-Bott function by Proposition~\ref{prp:move2}. Its incidence numbers
are those of $\K$, and the Poincar\'{e} series of $\K$ and of $\W(\K,f)$ both drop by
$t^{\dim\alpha_1}$.
\end{example}

\begin{figure}[htbp]
\centering
\begin{tikzpicture}[scale=1]

    \begin{scope}[shift={(-3.5,0)}]
      \node (a) at (0,0) {$\nu_1$};
      \node (d) at (2,0) {$\nu_2$};
      \node (e) at (1,1.5) {$\sigma_2$};
      \node (f) at (1,3) {$\tau_2$};

      \node (m) at (0,4.5) {$\alpha_1$};
      \node (n) at (2,4.5) {$\alpha_2$};

      \draw (d) -- (e) node[near end, right] {$-1$};
      \draw (a) -- (e) node[near end, left] {$1$};

      \draw (f) -- (n) node[near end, right] {$-1$};

      \node at (1,-0.5) {$\K$};
    \end{scope}

    \begin{scope}
      \node (a) at (0,0) {$1$};
      \node (d) at (2,0) {$1$};
      \node (e) at (1,1.5) {$1$};
      \node (f) at (1,3) {$3$};

      \node (m) at (0,4.5) {$3$};
      \node (n) at (2,4.5) {$3$};

      \draw (d) -- (e);
      \draw (a) -- (e);

      \draw (f) -- (n);

      \node at (1,-0.5) {$f,~-\nabla_s f=\emptyset$};
    \end{scope}

    \begin{scope}[shift={(3.5,0)}]
      \node (a) at (0,0) {$1$};
      \node (d) at (2,0) {$1$};
      \node (e) at (1,1.5) {$1$};
      \node (f) at (1,3) {$3$};

      \node (n) at (2,4.5) {$3$};

      \draw (d) -- (e) node[near end, right] {$-1$};
      \draw (a) -- (e) node[near end, left] {$1$};

      \draw (f) -- (n) node[near end, right] {$-1$};

      \node at (1,-0.5) {$\K\{\alpha_1\},~f$};
    \end{scope}

\end{tikzpicture}

\caption{Three diagrams illustrating Move~(II): a Lefschetz complex $\K$ containing an
isolated cell $\alpha_1$, a discrete Morse-Bott function $f$ on $\K$ for which every cell is
weakly critical, and the triple $\K\{\alpha_1\}$.}
\label{fig:move2}
\end{figure}

\begin{proposition}[Move~(III)]\label{prp:move3}
    Let $g\colon\K\to\R$ be a function. Assume that for any cells $\alpha,\beta\in\K$ with
    $w(\alpha,\beta)\neq0$ or $w(\beta,\alpha)\neq0$, we have
    \[
    f(\alpha)<f(\beta) \iff g(\alpha)<g(\beta).
    \]
    Then $g$ is a discrete Morse-Bott function on $\K$, $\W(\K,f)=\W(\K,g)$, and
    $(\K,f)\sim(\K,g)$.
\end{proposition}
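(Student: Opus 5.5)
The plan is to show that every quantity entering $R_t$ depends on $f$ only through the strict comparisons $f(\alpha)<f(\beta)$ between incident cells. Once that is established, all three assertions follow.

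\emph{Step 1: the counting functions agree.} For any cell $\sigma$, the set counted in $U^\snc_{\K,f}(\sigma)$ is $\{\tau\mid w(\tau,\sigma)\neq0,\ f(\tau)<f(\sigma)\}$. Applying the hypothesis with $\alpha=\tau$ and $\beta=\sigma$, which is allowed because $w(\tau,\sigma)\neq0$, this set coincides with $\{\tau\mid w(\tau,\sigma)\neq0,\ g(\tau)<g(\sigma)\}$. Hence $U^\snc_{\K,f}=U^\snc_{\K,g}$, and in the same way $D^\snc_{\K,f}=D^\snc_{\K,g}$. Consequently $g$ is a discrete Morse-Bott function, the two functions have the same weakly critical cells, and $-\nabla_s f=-\nabla_s g$, so $d^\snc_k(\K,f)=d^\snc_k(\K,g)$.

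\emph{Step 2: the weakly critical complexes coincide.} By Definition~\ref{def:wccpx}, $\W(\K,f)$ and $\W(\K,g)$ have the same cells. The incidence function of $\W(\K,f)$ at $(\tau,\sigma)$ equals $w(\tau,\sigma)$ when $f(\tau)=f(\sigma)$ and $0$ otherwise. If $w(\tau,\sigma)=0$, both incidence functions vanish at $(\tau,\sigma)$. If $w(\tau,\sigma)\neq0$, we apply the hypothesis in both orders $(\tau,\sigma)$ and $(\sigma,\tau)$; this is permitted because the hypothesis covers the case $w(\beta,\alpha)\neq0$. Then $f(\tau)=f(\sigma)$ is equivalent to the failure of both $f(\tau)<f(\sigma)$ and $f(\sigma)<f(\tau)$. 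This in turn is equivalent to the failure of both strict inequalities for $g$, that is, to $g(\tau)=g(\sigma)$. Hence the incidence functions agree and $\W(\K,f)=\W(\K,g)$.

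\emph{Step 3: conclusion.} Since $\K$ is unchanged and $\W(\K,f)=\W(\K,g)$, we have $P_t(\W(\K,f))-P_t(\K)=P_t(\W(\K,g))-P_t(\K)$. Remark~\ref{rem:Ptcriterion} then gives $(\K,f)\sim(\K,g)$. Alternatively, one can read this off directly from the defining formula of $R_t$, using the equality of $d^\snc_k$ from Step~1.

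There is no real obstacle here. The only point that needs care is Step~2, where the hypothesis must be applied in both directions to deduce the equivalence of the equalities $f(\tau)=f(\sigma)$ and $g(\tau)=g(\sigma)$. This is exactly why the statement allows either $w(\alpha,\beta)\neq0$ or $w(\beta,\alpha)\neq0$.
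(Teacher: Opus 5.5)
Your proof is correct and follows essentially the same route as the paper. Both show that the sets defining $U^\snc$ and $D^\snc$ coincide, apply the hypothesis in both orders to get $f(\tau)=f(\sigma)\iff g(\tau)=g(\sigma)$ for incident cells (so that $\W(\K,f)=\W(\K,g)$), and conclude with Remark~\ref{rem:Ptcriterion}; the paper merely establishes the equal-value equivalence first.
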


\begin{proof}
    For any cells $\alpha,\beta\in\K$ with $w(\alpha,\beta)\neq0$, we have
    \[
    f(\alpha)=f(\beta) \iff g(\alpha)=g(\beta).
    \]
    Indeed, the ordered pair $(\beta,\alpha)$ also satisfies the hypothesis of the
    assumption, so $f(\beta)<f(\alpha)\iff g(\beta)<g(\alpha)$ holds in addition to
    $f(\alpha)<f(\beta)\iff g(\alpha)<g(\beta)$. Negating these two equivalences and
    combining them gives the displayed one.

    By the assumption, for any cell $\sigma\in\K$ we have
    \[
    \{\tau\in\K\mid w(\tau,\sigma)\neq0,\ f(\sigma)>f(\tau)\}
    =\{\tau\in\K\mid w(\tau,\sigma)\neq0,\ g(\sigma)>g(\tau)\},
    \]
    so $U^\snc_{\K,f}(\sigma)=U^\snc_{\K,g}(\sigma)$. The same argument applied to the sets
    defining $D^\snc$ gives $D^\snc_{\K,f}(\sigma)=D^\snc_{\K,g}(\sigma)$. Hence $g$ is a
    discrete Morse-Bott function on $\K$, and $f$ and $g$ have the same weakly critical
    cells.

    The complexes $\W(\K,f)$ and $\W(\K,g)$ therefore have the same cells, and their
    dimension functions agree, both being the restriction of $\dim$ to those cells. Their
    incidence functions agree as well by Definition~\ref{def:wccpx} and the equivalence
    displayed at the beginning of the proof, so $\W(\K,f)=\W(\K,g)$. The two pairs have the
    same Lefschetz complex as well, so Remark~\ref{rem:Ptcriterion} gives
    $(\K,f)\sim(\K,g)$.
\end{proof}

\begin{example}
Figure~\ref{fig:move3} illustrates Move~(III). The left diagram shows the same Lefschetz
complex $\K$ as in Figure~\ref{fig:move1}, whose cells are $\nu_i$, $\sigma_i$, $\tau_i$
and $\alpha_i$ for $i\in\{1,2\}$. The middle diagram displays a discrete Morse-Bott
function $f$ on $\K$ together with its strict gradient vector field $-\nabla_s f$, which
consists of the single pair $(\alpha_2,\tau_1)$. The function $g$ on the right orders every
pair of cells with nonzero incidence number in the same way as $f$ does, so
Proposition~\ref{prp:move3} applies and gives $\W(\K,g)=\W(\K,f)$ and $(\K,f)\sim(\K,g)$.
The two functions have the same strict gradient vector field. Moreover
$g(\sigma)=\dim\sigma$ for every cell $\sigma$ with $\dim\sigma\in\{0,1\}$, so the
hypothesis of Proposition~\ref{prp:move4} below is satisfied with $\mu=\sigma_1$ and
$\kappa=\nu_1$. The function $f$ does not satisfy that hypothesis,
since $f(\nu_1)\neq\dim\nu_1$.
\end{example}

\begin{figure}[htbp]
\centering
\begin{tikzpicture}[scale=0.9]

    \begin{scope}[shift={(-3.5,0)}]
      \node (a) at (0,0) {$\nu_1$};
      \node (b) at (0,1.5) {$\sigma_1$};
      \node (c) at (0,3) {$\tau_1$};
      \node (d) at (2,0) {$\nu_2$};
      \node (e) at (2,1.5) {$\sigma_2$};
      \node (f) at (2,3) {$\tau_2$};

      \node (m) at (0,4.5) {$\alpha_1$};
      \node (n) at (2,4.5) {$\alpha_2$};

      \draw (a) -- (b) node[near end, left] {$1$};
      \draw (b) -- (c) node[near end, left] {$1$};
      \draw (d) -- (e) node[near end, right] {$-1$};
      \draw (e) -- (f) node[near end, right] {$-1$};
      \draw (a) -- (e) node[near end, above] {$1$};
      \draw (d) -- (b) node[near end, above] {$-1$};
      \draw (e) -- (c) node[near end, above] {$-1$};
      \draw (b) -- (f) node[near end, above] {$1$};

      \draw (c) -- (m) node[near end, left] {$1$};
      \draw (f) -- (n) node[near end, right] {$-1$};
      \draw (f) -- (m) node[near end, above] {$-1$};
      \draw (c) -- (n) node[near end, above] {$1$};

      \node at (1,-0.5) {$\K$};
    \end{scope}

    \begin{scope}
      \node (a) at (0,0) {$1$};
      \node (b) at (0,1.5) {$2$};
      \node (c) at (0,3) {$4$};
      \node (d) at (2,0) {$1$};
      \node (e) at (2,1.5) {$2$};
      \node (f) at (2,3) {$3$};

      \node (m) at (0,4.5) {$5$};
      \node (n) at (2,4.5) {$3$};

      \draw (a) -- (b);
      \draw (b) -- (c);
      \draw (d) -- (e);
      \draw (e) -- (f);
      \draw (a) -- (e);
      \draw (d) -- (b);
      \draw (e) -- (c);
      \draw (b) -- (f);

      \draw (c) -- (m);
      \draw (f) -- (n);
      \draw (f) -- (m);
      \draw[->,red,thick] (c) -- (n);

      \node at (1,-0.5) {$f,~-\nabla_s f$};
    \end{scope}

    \begin{scope}[shift={(3.5,0)}]
      \node (a) at (0,0) {$0$};
      \node (b) at (0,1.5) {$1$};
      \node (c) at (0,3) {$4$};
      \node (d) at (2,0) {$0$};
      \node (e) at (2,1.5) {$1$};
      \node (f) at (2,3) {$3$};

      \node (m) at (0,4.5) {$5$};
      \node (n) at (2,4.5) {$3$};

      \draw (a) -- (b);
      \draw (b) -- (c);
      \draw (d) -- (e);
      \draw (e) -- (f);
      \draw (a) -- (e);
      \draw (d) -- (b);
      \draw (e) -- (c);
      \draw (b) -- (f);

      \draw (c) -- (m);
      \draw (f) -- (n);
      \draw (f) -- (m);
      \draw[->,red,thick] (c) -- (n);

      \node at (1,-0.5) {$g,~-\nabla_s g$};
    \end{scope}

\end{tikzpicture}

\caption{Three diagrams illustrating Move~(III): a Lefschetz complex $\K$, a discrete
Morse-Bott function $f$ on $\K$ with its strict gradient vector field $-\nabla_s f$, and
the function $g$ obtained from $f$ by Move~(III), which agrees with $\dim$ on the cells of
dimension $0$ and $1$.}
\label{fig:move3}
\end{figure}

\begin{definition}
  Let $\mu,\kappa\in\K$ with $\dim\mu-\dim\kappa=1$. We define the triple $E(\mu,\kappa):=(\{\mu,\kappa\},\dim|_{\{\mu,\kappa\}},w_E)$, where $w_E(\mu,\kappa)=1$ and $w_E(\tau,\sigma)=0$ otherwise. We call $E(\mu,\kappa)$ an \textbf{elementary block}.
\end{definition}

\begin{lemma}\label{lem:Rtblock}
    Let $\mu,\kappa\in\K$ with $\dim\mu-\dim\kappa=1$. Then $E(\mu,\kappa)$ is a Lefschetz
    complex, $P_t(E(\mu,\kappa))=0$ and $R_t(E(\mu,\kappa),\dim)=t^{\dim\mu-1}$.
\end{lemma}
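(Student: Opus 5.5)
We verify the three assertions in turn; each is a direct computation from the definitions and earlier results.

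\emph{Lefschetz complex.} Finiteness is clear since there are only two cells. For (L1), the only pair with nonzero incidence is $(\mu,\kappa)$, and $\dim\mu-\dim\kappa=1$ by hypothesis. For (L2), a nonzero term $w_E(\tau,\sigma)w_E(\sigma,\nu)$ would force $(\tau,\sigma)=(\mu,\kappa)$ and $(\sigma,\nu)=(\mu,\kappa)$. Hence $\sigma=\kappa=\mu$, which is impossible. So every term vanishes.

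\emph{Poincar\'{e} series.} Put $k:=\dim\kappa$. Then $C_{k+1}=\R\mu$, $C_k=\R\kappa$, and all other chain groups are zero. The map $\partial_{k+1}$ sends $\mu\mapsto\kappa$, an isomorphism, and every other boundary map is zero. Hence $Z_{k+1}=0$ and $B_k=C_k=Z_k$. All homology vanishes, and $P_t(E(\mu,\kappa))=0$. The same conclusion follows from Lemma~\ref{lem:rankid}: we get $\rank B_k=1$, and then $b_k=b_{k+1}=0$.

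\emph{Remainder series.} By Lemma~\ref{lem:dimMB}, $\dim$ is a discrete Morse-Bott function on $E(\mu,\kappa)$, with $d^\snc_j=0$ and $\rank B_j(\W)=0$ for every $j$. Therefore the coefficient of $t^{j-1}$ in $R_t(E(\mu,\kappa),\dim)$ is just $\rank B_{j-1}(E(\mu,\kappa))$. This rank is $1$ for $j-1=k$ and $0$ otherwise, so
\[
R_t(E(\mu,\kappa),\dim)=t^{k}=t^{\dim\mu-1}.
\]

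As a cross-check, use Lemma~\ref{lem:MBpoly}. All cells are weakly critical with zero incidence in $\W$, so $P_t(\W)=t^{k}+t^{k+1}=(1+t)t^k$. Combined with $P_t(E)=0$, this gives the same answer.

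The argument is routine. The only point requiring care is tracking the degree of the single nonzero boundary rank, which sits in degree $\dim\kappa=\dim\mu-1$.
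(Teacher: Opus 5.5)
Your proof is correct and matches the paper's argument. You verify (L1) and (L2) the same way, observe that the only nonzero boundary map is the isomorphism $\mu\mapsto\kappa$ in degree $\dim\mu$, and read off $R_t(E(\mu,\kappa),\dim)=t^{\dim\mu-1}$ from Lemma~\ref{lem:dimMB} and the definition of $R_t$; your cross-check via Lemma~\ref{lem:MBpoly} is an extra, equally valid confirmation.
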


\begin{proof}
    Put $p:=\dim\mu$. The only pair on which $w_E$ is nonzero is $(\mu,\kappa)$, and
    $\dim\mu-\dim\kappa=1$, so condition (L1) holds. There are no cells
    $\tau,\sigma,\nu\in\{\mu,\kappa\}$ with $w_E(\tau,\sigma)\neq0$ and
    $w_E(\sigma,\nu)\neq0$, since such a cell $\sigma$ would equal both $\kappa$ and $\mu$.
    Hence $\sum_{\sigma\in\{\mu,\kappa\}}w_E(\tau,\sigma)w_E(\sigma,\nu)=0$ for any
    $\tau,\nu\in\{\mu,\kappa\}$, so condition (L2) holds. Since $\{\mu,\kappa\}$ is finite,
    $E(\mu,\kappa)$ is a Lefschetz complex.

    The boundary homomorphism of $E(\mu,\kappa)$ satisfies $\partial_p(\mu)=\kappa$ and
    $\partial_k=0$ for $k\neq p$. Hence $H_k(E(\mu,\kappa))=\{0\}$ for every $k$,
    $\rank B_{p-1}(E(\mu,\kappa))=1$ and $\rank B_k(E(\mu,\kappa))=0$ for $k\neq p-1$. The
    assertion follows from Lemma~\ref{lem:dimMB} and the definition of $R_t$.
\end{proof}

\begin{lemma}\label{lem:move4wc}
    Let $p\in\Z_{\ge1}$. Assume that $f$ agrees with $\dim$ on the cells of dimension
    $p-1$ and $p$. Then the following hold.
    \begin{enumerate}[(1)]
        \item If a $p$-cell $\sigma$ is not weakly critical, then $\partial_p^\K(\sigma)=0$.
        \item If a $(p-1)$-cell $\nu$ is not weakly critical, then $w(\sigma,\nu)=0$ for
              every $p$-cell $\sigma$.
    \end{enumerate}
\end{lemma}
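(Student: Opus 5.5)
Both parts come from the same idea: a non-weakly-critical cell comes with a strict gradient pair pointing out of the two dimensions where $f=\dim$. Lemma~\ref{lem:incidence} then duplicates a path through that cell, which produces two cells counted in a single $U^\snc$ or $D^\snc$. This contradicts the hypothesis that $f$ is a discrete Morse-Bott function. In each part I first decide which of $U^\snc$, $D^\snc$ equals $1$, using only that $f=\dim$ on dimensions $p-1$ and $p$ and (L1). Then I run the duplication argument.

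For (1), let $\sigma$ be a $p$-cell that is not weakly critical, so $U^\snc(\sigma)+D^\snc(\sigma)=1$ by Lemma~\ref{lem:MBone}. A cell counted in $D^\snc(\sigma)$ would be a $(p-1)$-cell $\nu'$ with $f(\nu')>f(\sigma)$. That is impossible, since $f(\nu')=p-1<p=f(\sigma)$. Hence $U^\snc(\sigma)=1$, and there is a $(p+1)$-cell $\tau$ with $w(\tau,\sigma)\neq0$ and $f(\tau)<f(\sigma)$; in particular $\sigma$ is counted in $D^\snc(\tau)$. Suppose $\partial_p(\sigma)\neq0$, i.e.\ $w(\sigma,\nu)\neq0$ for some $(p-1)$-cell $\nu$. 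Lemma~\ref{lem:incidence} applied to $w(\tau,\sigma)w(\sigma,\nu)\neq0$ gives a $p$-cell $\widetilde\sigma\neq\sigma$ with $w(\tau,\widetilde\sigma)\neq0$. Since $f(\widetilde\sigma)=p=f(\sigma)>f(\tau)$, the cell $\widetilde\sigma$ is also counted in $D^\snc(\tau)$. Then $D^\snc(\tau)\ge2$, a contradiction.

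For (2), let $\nu$ be a $(p-1)$-cell that is not weakly critical. A cell counted in $U^\snc(\nu)$ would be a $p$-cell $\tau$ with $f(\tau)<f(\nu)$, i.e.\ $p<p-1$, which is impossible. Hence $D^\snc(\nu)=1$, and there is a cell $\alpha$ with $w(\nu,\alpha)\neq0$ and $f(\alpha)>f(\nu)=p-1$. (If $p=1$, no such $\alpha$ exists, so every $0$-cell is weakly critical and the statement is vacuous.) Suppose $w(\sigma,\nu)\neq0$ for some $p$-cell $\sigma$. Lemma~\ref{lem:incidence} applied to $w(\sigma,\nu)w(\nu,\alpha)\neq0$ gives a $(p-1)$-cell $\widetilde\nu\neq\nu$ with $w(\widetilde\nu,\alpha)\neq0$. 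Since $f(\widetilde\nu)=p-1<f(\alpha)$, both $\nu$ and $\widetilde\nu$ are counted in $U^\snc(\alpha)$. Then $U^\snc(\alpha)\ge2$, a contradiction.

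I expect no real obstacle. The one point to check carefully is in each part the choice of the cell ($\tau$ in (1), $\alpha$ in (2)) whose degree is forced to be at least $2$, and the check that the duplicated cell really satisfies the strict inequality. That check uses only that $f$ is constant ($=\dim$) on the relevant dimension.
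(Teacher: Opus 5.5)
Your proof is correct and follows the paper's argument essentially verbatim. Part (2) is identical. In part (1) the only difference is cosmetic: the paper shows $\partial_{p+1}^\K(\tau)=w(\tau,\sigma)\sigma$ and then invokes $\partial_p^\K\circ\partial_{p+1}^\K=0$, whereas you argue by contradiction through Lemma~\ref{lem:incidence}, which is the same use of (L2).
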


\begin{proof}
    (1) Let $\sigma$ be a $p$-cell that is not weakly critical. Since every $(p-1)$-cell
    has $f$-value $p-1$ and $f(\sigma)=p$, we have $D^\snc(\sigma)=0$, and therefore
    $U^\snc(\sigma)\ge1$. Thus there is a cell $\tau\in\K$ such that $\sigma\to_f\tau$.
    Suppose there is a $p$-cell $\widetilde\sigma\neq\sigma$ such that
    $w(\tau,\widetilde\sigma)\neq0$. Then $f(\tau)<f(\sigma)=f(\widetilde\sigma)$, so both
    $\sigma$ and $\widetilde\sigma$ are counted in $D^\snc(\tau)$, which contradicts
    $D^\snc(\tau)\le1$. Therefore $\partial_{p+1}^\K(\tau)=w(\tau,\sigma)\sigma$, and so
    $\partial_p^\K(\sigma)=0$ by $\partial_p^\K\circ\partial_{p+1}^\K=0$.

    (2) Let $\nu$ be a $(p-1)$-cell that is not weakly critical. Since every $p$-cell has
    $f$-value $p$ and $f(\nu)=p-1$, we have $U^\snc(\nu)=0$, and therefore
    $D^\snc(\nu)\ge1$. Thus there is a cell $\alpha\in\K$ such that $\alpha\to_f\nu$.
    Suppose there is a $p$-cell $\sigma$ such that $w(\sigma,\nu)\neq0$. Since
    $w(\sigma,\nu)w(\nu,\alpha)\neq0$, Lemma~\ref{lem:incidence} gives a cell
    $\widetilde\nu\neq\nu$ such that $w(\sigma,\widetilde\nu)w(\widetilde\nu,\alpha)\neq0$.
    By (L1) it is a $(p-1)$-cell, so $f(\widetilde\nu)=p-1$. Then
    $f(\alpha)>f(\nu)=f(\widetilde\nu)$, so both $\nu$ and $\widetilde\nu$ are counted in
    $U^\snc(\alpha)$, which contradicts $U^\snc(\alpha)\le1$. Hence $w(\sigma,\nu)=0$ for
    every $p$-cell $\sigma$.
\end{proof}

\begin{lemma}\label{lem:move4quotient}
    Let $\mu,\kappa\in\K$ with $w(\mu,\kappa)\neq0$, and assume that $f$ agrees with $\dim$
    on the cells of dimension $\dim\mu$ and $\dim\kappa$. Then $\mu$ and $\kappa$ are weakly
    critical, the function $f$ is a discrete Morse-Bott function on $\K\{\mu,\kappa\}$, and
    $\W(\K\{\mu,\kappa\},f)$ is obtained from $\W(\K,f)$ by removing $\mu$ and $\kappa$ and
    restricting the incidence function.
\end{lemma}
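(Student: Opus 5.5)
Put $p:=\dim\mu$, so that $\dim\kappa=p-1$ by (L1), and $f(\mu)=p$, $f(\kappa)=p-1$. The plan is to first show that $\mu$ and $\kappa$ are weakly critical using Lemma~\ref{lem:move4wc}. Since $w(\mu,\kappa)\neq0$, we have $\partial_p^\K(\mu)\neq0$, so $\mu$ is weakly critical by Lemma~\ref{lem:move4wc}(1). Likewise, $\mu$ is a $p$-cell with $w(\mu,\kappa)\neq0$, so $\kappa$ is weakly critical by Lemma~\ref{lem:move4wc}(2). However, $f(\mu)\neq f(\kappa)$, so Move~(I) in neither form applies directly, and the quotient has to be analysed by hand.

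Next I would show that the correction term $w_1(\tau,\sigma)=w(\tau,\kappa)w(\mu,\sigma)/w(\mu,\kappa)$ in Definition~\ref{def:quotient} is nonzero only for $p$-cells $\tau$ and $(p-1)$-cells $\sigma$. If $w_1(\tau,\sigma)\neq0$, then $w(\tau,\kappa)\neq0$ and $w(\mu,\sigma)\neq0$, so by (L1) $\tau$ is a $p$-cell and $\sigma$ is a $(p-1)$-cell. Hence $f(\tau)=p$ and $f(\sigma)=p-1$. Consequently, $w'$ agrees with $w$ on all pairs except pairs of the form ($p$-cell, $(p-1)$-cell), and on those pairs $f(\tau)>f(\sigma)$. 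Such a pair is never counted in $U^\snc$ or $D^\snc$, since counting requires $f(\sigma)>f(\tau)$ for an incidence $w(\tau,\sigma)$. Also, $\mu$ and $\kappa$ are counted in no $U^\snc(\sigma)$ or $D^\snc(\sigma)$: the only cells with which they can have a strictly-increasing-value incidence are in adjacent dimensions, and weak criticality of $\mu,\kappa$ gives $U^\snc=D^\snc=0$ at them, so the corresponding counts at the other cell vanish as well. Therefore $U^\snc_{\K',f}=U^\snc_{\K,f}$ and $D^\snc_{\K',f}=D^\snc_{\K,f}$ on $\K':=\K\{\mu,\kappa\}$. This shows that $f$ is discrete Morse-Bott on $\K'$ and has the same weakly critical cells there, namely those of $\W(\K,f)$ minus $\mu$ and $\kappa$.

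Finally, I would compare incidence functions. By Definition~\ref{def:wccpx}, $(w')^\W(\tau,\sigma)$ equals $w'(\tau,\sigma)$ if $f(\tau)=f(\sigma)$ and $0$ otherwise. Whenever $f(\tau)=f(\sigma)$, the pair is not of the ($p$, $p-1$) type, since there $f(\tau)=p\neq p-1=f(\sigma)$. So $w'(\tau,\sigma)=w(\tau,\sigma)$, which is exactly $w^\W$ restricted to the remaining cells. This proves that $\W(\K',f)$ is $\W(\K,f)$ with $\mu$ and $\kappa$ deleted and the incidence function restricted; in particular no quotient correction occurs inside $\W$.

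The main obstacle I expect is the bookkeeping in the second step. One must check that removing $\mu$ and $\kappa$ does not decrease any count $U^\snc$ or $D^\snc$, which relies on weak criticality of both cells, and that the changed incidences never contribute, which relies on the strict orientation $f(\tau)>f(\sigma)$ in dimensions $p$ and $p-1$.
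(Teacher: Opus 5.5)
Your proposal is correct and follows essentially the same route as the paper. It obtains weak criticality of $\mu$ and $\kappa$ from Lemma~\ref{lem:move4wc}, observes that the correction term of Definition~\ref{def:quotient} can be nonzero only at pairs ($p$-cell, $(p-1)$-cell) where $f(\tau)=p>p-1=f(\sigma)$ (so it affects neither $U^\snc$, $D^\snc$ nor the equal-valued incidences of $\W$), and uses the fact that $\mu,\kappa$ contribute to no count; that last fact really comes from the duality ``$\mu$ is counted in $U^\snc(\sigma)$ if and only if $\sigma$ is counted in $D^\snc(\mu)$'' (and likewise in the other three cases) together with $U^\snc=D^\snc=0$ at $\mu,\kappa$, not from the remark about adjacent dimensions.
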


\begin{proof}
    Put $p:=\dim\mu$, so that $\dim\kappa=p-1$ by (L1). Since $w(\mu,\kappa)\neq0$, we have
    $\partial_p^\K(\mu)\neq0$. Hence $\mu$ is weakly critical by
    Lemma~\ref{lem:move4wc}~(1), and $\kappa$ is weakly critical by
    Lemma~\ref{lem:move4wc}~(2).

    Since $\mu$ and $\kappa$ are weakly critical, neither of them is counted in
    $U^\snc_{\K,f}(\sigma)$ or in $D^\snc_{\K,f}(\sigma)$ for any cell $\sigma\in\K$.

    Write $\K':=\K\{\mu,\kappa\}$ and denote by $w'$ its incidence function. For
    $\tau,\sigma\in\K'$, the function $w'$ differs from $w$ at $(\tau,\sigma)$ only if
    $w(\tau,\kappa)w(\mu,\sigma)\neq0$, that is, only if $\dim\tau=p$ and $\dim\sigma=p-1$.
    In that case $f(\tau)=p>p-1=f(\sigma)$, and therefore
    \[
    f(\tau)\le f(\sigma)\implies w'(\tau,\sigma)=w(\tau,\sigma)
    \]
    for $\tau,\sigma\in\K'$. By the implication above, the incidence numbers occurring in
    $U^\snc$ and in $D^\snc$ are the same in $\K'$ as in $\K$, and by the previous paragraph
    neither $\mu$ nor $\kappa$ contributes to them. Hence $U^\snc_{\K',f}=U^\snc_{\K,f}$ and
    $D^\snc_{\K',f}=D^\snc_{\K,f}$ on $\K'$, so $f$ is a discrete Morse-Bott function on
    $\K'$ and a cell of $\K'$ is weakly critical for $f$ on $\K'$ if and only if it is
    weakly critical for $f$ on $\K$. The complexes $\W(\K',f)$ and $\W(\K,f)$ therefore have
    the same cells apart from $\mu$ and $\kappa$, and their dimension functions agree. Since
    $f(\tau)=f(\sigma)$ implies $f(\tau)\le f(\sigma)$, the implication above and
    Definition~\ref{def:wccpx} show that their incidence functions agree as well.
\end{proof}

Under the hypothesis of the next proposition, the two cells to be cancelled are weakly
critical and take different $f$-values, so they lie in different weakly critical sets.
Cancelling them therefore lowers $P_t(\W(\K,f))$ by the two terms corresponding to their
dimensions, while $P_t(\K)$ is unchanged. In case~(b) of Move~(I) the two cells lie in a
single weakly critical set, and the cancellation is instead matched by a cancellation
inside $\W(\K,f)$. The elementary block determined by the cancelled pair is adjoined to
restore the balance. Its Poincar\'{e} series vanishes, so adjoining it leaves $P_t(\K)$
unchanged. The incidence function of its weakly critical complex vanishes as well, and
that complex contributes exactly the two terms that were lost.

\begin{proposition}[Move~(IV)]\label{prp:move4}
    Let $\mu,\kappa\in\K$ with $w(\mu,\kappa)\neq0$, and assume that $f$ agrees with $\dim$
    on the cells of dimension $\dim\mu$ and $\dim\kappa$. Put
    $\K_E:=\K\{\mu,\kappa\}\sqcup E(\mu,\kappa)$. Then $f$ is a discrete Morse-Bott function
    on $\K_E$,
    \[
    P_t(\K_E)=P_t(\K),\qquad P_t\bigl(\W(\K_E,f)\bigr)=P_t\bigl(\W(\K,f)\bigr),
    \]
    and consequently $(\K,f)\sim(\K_E,f)$.
\end{proposition}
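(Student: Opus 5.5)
We prove the statement by splitting $\K_E$ into its two summands and treating each one separately. Write $\K':=\K\{\mu,\kappa\}$ and $p:=\dim\mu$, so that $\dim\kappa=p-1$. Since the cells of $E(\mu,\kappa)$ are $\mu,\kappa$ and those of $\K'$ are disjoint from them, the disjoint union $\K_E$ has the same cells as $\K$ in each dimension. Hence $\K_E$ is degreewise finite and is a Lefschetz complex by Lemma~\ref{lem:disjoint}.

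First we check that $f$ is discrete Morse-Bott on each summand. On $\K'$ this is Lemma~\ref{lem:move4quotient}. On $E(\mu,\kappa)$ the only nonzero incidence is $w_E(\mu,\kappa)=1$, and $f(\mu)=p>p-1=f(\kappa)$ by hypothesis. So $U^\snc=D^\snc=0$ on both cells. Thus $f$ is discrete Morse-Bott on $E(\mu,\kappa)$, both cells are weakly critical, and $\W(E(\mu,\kappa),f)$ has zero incidence function, since $f(\mu)\neq f(\kappa)$. Lemma~\ref{lem:Rtdisjoint} then shows that $f$ is discrete Morse-Bott on $\K_E$ and that
\[
\W(\K_E,f)=\W(\K',f)\sqcup\W(E(\mu,\kappa),f).
\]

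Next we compute the Poincar\'{e} series.

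\emph{The complex $\K_E$.} By Lemma~\ref{lem:disjointsum}, Lemma~\ref{lem:ChHoMuKappa} with Lemma~\ref{lem:rankhom}, and Lemma~\ref{lem:Rtblock},
\[
P_t(\K_E)=P_t(\K')+P_t(E(\mu,\kappa))=P_t(\K)+0.
\]

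\emph{The weakly critical complex.} The second summand, having zero incidence, contributes $t^p+t^{p-1}$. For the first summand, Lemma~\ref{lem:move4quotient} says that $\W(\K',f)$ is $\W(\K,f)$ with $\mu,\kappa$ deleted and the incidence restricted. Both $\mu$ and $\kappa$ are weakly critical and lie in different weakly critical sets, because $f(\mu)\neq f(\kappa)$. The incidence function of $\W(\K,f)$ vanishes between cells of different $f$-values, and every $p$-cell has value $p$ while every $(p-1)$-cell has value $p-1$. By (L1), any cell with nonzero incidence to $\mu$ has dimension $p\pm1$ and so has value $p-1$ or lies in dimension $p+1$. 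The key point is that $\mu$ and $\kappa$ are isolated in $\W(\K,f)$:
\begin{itemize}
\item[--] A $(p+1)$-cell $\tau$ of $\W$ with $w^\W(\tau,\mu)\neq0$ would need $f(\tau)=p$.
\item[--] A $(p-1)$-cell incident to $\mu$ has value $p-1\neq p$, so it has zero $\W$-incidence with $\mu$.
\item[--] The case of $\kappa$ is symmetric: its $p$-cell cofaces have value $p\neq p-1$, and a $(p-2)$-cell face $\nu$ with $w^\W(\kappa,\nu)\neq0$ would need $f(\nu)=p-1$.
\end{itemize}

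The main obstacle is this isolation: higher or lower neighbours could share the $f$-value. Rather than prove isolation, we avoid it. We check directly that removing $\mu$ and $\kappa$ from $\W(\K,f)$ lowers its Poincar\'{e} series by exactly $t^p+t^{p-1}$.

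The plan is to look at the weakly critical sets $C_p\ni\mu$ and $C_{p-1}\ni\kappa$, at levels $p$ and $p-1$.

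\emph{The set $C_p$.} Its cells of dimension $p-1$ would have value $p-1$, so there are none. Hence $\mu$ is a cell of minimal dimension in $C_p$, and $\mu$ is not hit by any boundary from $C_p$; I would try to show that $\mu$ is a $p$-cycle of $C_p$ not in $B_p(C_p)$. Suppose $\tau\in C_p$ with $w(\tau,\mu)\neq0$. Then $f(\tau)=p=f(\mu)$, and $w(\tau,\mu)w(\mu,\kappa)\neq0$. Lemma~\ref{lem:incidence} gives $\widetilde\mu\neq\mu$ with $w(\tau,\widetilde\mu)w(\widetilde\mu,\kappa)\neq0$, and every such $\widetilde\mu$ has value $p$. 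To get a contradiction, I would instead use $D^\snc(\tau)=0$ together with (L2) at $(\tau,\kappa)$.

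\emph{The set $C_{p-1}$.} Its cells of dimension $p$ would have value $p-1$, so there are none. Hence $\kappa$ is not a boundary in $C_{p-1}$. Its own boundary within $C_{p-1}$ involves $(p-2)$-cells of value $p-1$, and showing that this boundary vanishes is symmetric to the previous case.

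If the direct isolation argument fails, I would fall back on Remark~\ref{rem:move1coeff}-style bookkeeping. Compare $R_t$ coefficientwise:
\begin{itemize}
\item[--] $\rank B_{p-1}$ of $\K$ drops by $1$ under cancellation (Lemma~\ref{lem:rankcancel}) and is restored by the block;
\item[--] $d^\snc$ is unchanged;
\item[--] $\rank B_k(\W)$ is unchanged, using that cancelling cells lying in distinct levels does not alter boundary ranks of $\W$.
\end{itemize}
Finally, Remark~\ref{rem:Ptcriterion} gives $(\K,f)\sim(\K_E,f)$.
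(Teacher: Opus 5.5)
Your treatment of $\K_E$ itself is correct and matches the paper: $f$ is Morse-Bott on both summands, $\W(\K_E,f)=\W(\K',f)\sqcup\W(E(\mu,\kappa),f)$, $P_t(\K_E)=P_t(\K)$, and the block contributes $t^p+t^{p-1}$. The substance of the proposition is the identity $P_t(\W')=P_t(\W)-t^p-t^{p-1}$, equation~\eqref{eq:move4W} in the paper. Here your proposal has a genuine gap, because the sub-goals you set for $C_p$ and $C_{p-1}$ are false in general.

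For $p=1$, take a $2$-cell $\tau$ with $\partial\tau=\mu+\widetilde\mu$ and $\partial\mu=\kappa=-\partial\widetilde\mu$, and let $f=\dim$ except $f(\tau)=1$. Then every cell is weakly critical, the hypothesis holds, and $w^\W(\tau,\mu)=1$. So no contradiction can be derived from a cell $\tau\in C_p$ with $w(\tau,\mu)\neq0$; deriving one would be exactly the isolation you said you would avoid. Dually, for $p=2$, take $\partial\mu=\kappa+\widetilde\kappa$ and $\partial\kappa=\nu=-\partial\widetilde\kappa$, with $f=\dim$ except $f(\nu)=1$. Then $\partial^{C_{p-1}}\kappa=\nu\neq0$, so the claim that this boundary vanishes fails.

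What is actually needed comes from the long exact sequences for deleting the cycle $\mu$ from $C_p$ and the non-boundary cell $\kappa$ from $C_{p-1}$. It is $\mu\notin B_p(C_p)$ together with $\partial^{C_{p-1}}\kappa\in B_{p-2}(C_{p-1}\setminus\{\kappa\})$. These are global statements and cannot be checked one cell at a time.

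Your fallback does not close the gap. By Lemma~\ref{lem:rankid}, its third bullet (every $\rank B_k(\W)$ unchanged) is equivalent to \eqref{eq:move4W} itself. Moreover, deleting $\mu$ and $\kappa$ from $\W$ is not a cancellation in the sense of Definition~\ref{def:quotient}, since $w^\W(\mu,\kappa)=0$, so Lemma~\ref{lem:rankcancel} gives nothing.

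The missing step can be supplied in two ways.

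\emph{The paper's route.} It reduces to the isolated case. If $w^\W(\tau,\mu)\neq0$ (resp.\ $w^\W(\kappa,\alpha)\neq0$), Lemma~\ref{lem:incidence} produces $\widetilde\mu$ (resp.\ $\widetilde\kappa$), which is weakly critical by Lemma~\ref{lem:move4quotient}. One then cancels $(\tau,\widetilde\mu)$ (resp.\ $(\widetilde\kappa,\alpha)$) by Move~(I), case~(b). This leaves $w(\mu,\kappa)$ and the hypothesis intact and changes neither side of \eqref{eq:move4W}. By degreewise finiteness the process stops with $\mu$ and $\kappa$ isolated in $\W$.

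\emph{Completing your direct route.} Define the functional $\phi(\sigma):=w(\sigma,\kappa)$ on the $p$-chains of $C_p$. It vanishes on $\partial^{C_p}\tau$ for every $(p+1)$-cell $\tau\in C_p$, by (L2) at $(\tau,\kappa)$ together with Lemma~\ref{lem:move4wc}\,(1), which places every $p$-cell $\sigma$ with $w(\sigma,\kappa)\neq0$ in $C_p$. Since $\phi(\mu)\neq0$, it follows that $\mu\notin B_p(C_p)$. Symmetrically, (L2) at $(\mu,\nu)$ and Lemma~\ref{lem:move4wc}\,(2) show that $\sum_\sigma w(\mu,\sigma)\sigma$, summed over the $(p-1)$-cells of $C_{p-1}$, is a cycle. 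Its $\kappa$-coefficient is $w(\mu,\kappa)\neq0$, so $\partial^{C_{p-1}}\kappa$ is a boundary in $C_{p-1}\setminus\{\kappa\}$.

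Your mention of (L2) at $(\tau,\kappa)$ points in this direction. However, $D^\snc(\tau)=0$ is automatic here and plays no role. The inputs that matter are Lemma~\ref{lem:move4wc} and replacing single cells by a linear functional or cycle.
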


\begin{proof}
    Put $p:=\dim\mu$, so that $\dim\kappa=p-1$ by (L1), and write
    $\K':=\K\{\mu,\kappa\}$, $\W:=\W(\K,f)$ and $\W':=\W(\K',f)$.

    By Lemma~\ref{lem:move4quotient}, the function $f$ is a discrete Morse-Bott function on
    $\K'$, and $\W'$ is obtained from $\W$ by removing $\mu$ and $\kappa$ and restricting
    the incidence function. Since $f=\dim$ on $E(\mu,\kappa)$, Lemmas~\ref{lem:dimMB}
    and~\ref{lem:Rtdisjoint} show that $f$ is a discrete Morse-Bott function on $\K_E$. By
    Lemmas~\ref{lem:ChHoMuKappa},~\ref{lem:rankhom},~\ref{lem:Rtblock},
    and~\ref{lem:disjointsum},
    \[
    P_t(\K_E)=P_t(\K')+P_t(E(\mu,\kappa))=P_t(\K).
    \]

    It remains to prove
    \begin{equation}\label{eq:move4W}
    P_t(\W')=P_t(\W)-t^p-t^{p-1}.
    \end{equation}
    In the reduction that follows, the symbols $\K'$, $\W$ and $\W'$ refer to the current
    complex.

    Since $f$ takes the value $p$ on the $p$-cells and the value $p-1$ on the
    $(p-1)$-cells, we have $w^\W(\mu,\beta)=0$ and $w^\W(\beta,\kappa)=0$ for every cell
    $\beta$ of $\W$. Suppose that there is a cell $\tau$ of $\W$ with $w^\W(\tau,\mu)\neq0$
    or a cell $\alpha$ of $\W$ with $w^\W(\kappa,\alpha)\neq0$. We claim that there is then
    a pair of cells of $\W$ to which Lemma~\ref{lem:cancelvalue} applies in case (b) and
    whose second entry $\gamma$ satisfies $w(\mu,\gamma)=0$.

    In the first case $w(\tau,\mu)w(\mu,\kappa)\neq0$, so Lemma~\ref{lem:incidence} gives a
    cell $\widetilde\mu\neq\mu$ with $w(\tau,\widetilde\mu)w(\widetilde\mu,\kappa)\neq0$. By
    (L1) it is a $p$-cell, so $\widetilde\mu\neq\kappa$, $w(\mu,\widetilde\mu)=0$ and
    $f(\widetilde\mu)=p=f(\tau)$. Applying Lemma~\ref{lem:move4quotient} to the pair
    $(\widetilde\mu,\kappa)$ shows that $\widetilde\mu$ is weakly critical, so
    $(\tau,\widetilde\mu)$ proves the claim.

    In the second case $w(\mu,\kappa)w(\kappa,\alpha)\neq0$, so Lemma~\ref{lem:incidence}
    gives a cell $\widetilde\kappa\neq\kappa$ with
    $w(\mu,\widetilde\kappa)w(\widetilde\kappa,\alpha)\neq0$. By (L1) it is a $(p-1)$-cell,
    so $\widetilde\kappa\neq\mu$, $w(\mu,\alpha)=0$ and
    $f(\widetilde\kappa)=p-1=f(\alpha)$. Applying Lemma~\ref{lem:move4quotient} to the pair
    $(\mu,\widetilde\kappa)$ shows that $\widetilde\kappa$ is weakly critical, so
    $(\widetilde\kappa,\alpha)$ proves the claim.

    Let $(\beta,\gamma)$ be a pair as in the claim and put $\K_1:=\K\{\beta,\gamma\}$. Since
    $w(\mu,\gamma)=0$, Remark~\ref{rem:quotientinc} shows that the value of the incidence
    function at $(\mu,\kappa)$ is unchanged in $\K_1$, so the hypothesis of the proposition holds for
    $\K_1$. By Lemma~\ref{lem:cancelwc} we have $\W(\K_1,f)=\W\{\beta,\gamma\}$, and
    Lemma~\ref{lem:move4quotient} applied to $\K_1$ gives
    \[
    \W(\K_1\{\mu,\kappa\},f)=\W(\K_1,f)\setminus\{\mu,\kappa\}
    =\W\{\beta,\gamma\}\setminus\{\mu,\kappa\}=\W'\{\beta,\gamma\}.
    \]
    Here $\beta$ and $\gamma$ are cells of $\W'$, and $\W'$ is obtained from $\W$ by
    removing $\mu$ and $\kappa$ and restricting the incidence function, so the correction of
    Definition~\ref{def:quotient} takes the same value at a pair of cells of $\W'$ whether
    it is computed in $\W$ or in $\W'$. Hence Lemmas~\ref{lem:ChHoMuKappa}
    and~\ref{lem:rankhom}, applied to $\W$ and to $\W'$, show that both Poincar\'{e} series
    in \eqref{eq:move4W} are unchanged by this replacement. Each replacement removes a cell
    of dimension at least $p-2$ and at most $p+1$, and there are finitely many such cells,
    so we may assume that $\mu$ and $\kappa$ are isolated in $\W$.

    In this case Lemma~\ref{lem:move4quotient} gives
    \[
    \W=\W'\sqcup(\{\mu\},\dim|_{\{\mu\}},0)\sqcup(\{\kappa\},\dim|_{\{\kappa\}},0),
    \]
    so $P_t(\W)=P_t(\W')+t^p+t^{p-1}$ by Lemma~\ref{lem:disjointsum}, which is
    \eqref{eq:move4W}.

    Since the incidence function of $\W(E(\mu,\kappa),f)$ vanishes by
    Lemma~\ref{lem:dimMB}, we have $P_t(\W(E(\mu,\kappa),f))=t^p+t^{p-1}$. Hence
    $P_t(\W(\K_E,f))=P_t(\W')+t^p+t^{p-1}=P_t(\W)$ by Lemmas~\ref{lem:Rtdisjoint}
    and~\ref{lem:disjointsum}. The last assertion follows from
    Remark~\ref{rem:Ptcriterion}.
\end{proof}

\begin{example}
Figure~\ref{fig:move4} illustrates Move~(IV). The left diagram shows the same Lefschetz
complex $\K$ as in Figure~\ref{fig:move3}, and the middle diagram displays the discrete
Morse-Bott function $g$ of Figure~\ref{fig:move3} together with its strict gradient vector
field $-\nabla_s g$, which consists of the single pair $(\alpha_2,\tau_1)$. Since
$w(\sigma_1,\nu_1)\neq0$ and $g$ agrees with $\dim$ on the cells of dimension $0$ and $1$,
Proposition~\ref{prp:move4} applies with $\mu=\sigma_1$ and $\kappa=\nu_1$. The right
diagram shows $\K_E=\K\{\sigma_1,\nu_1\}\sqcup E(\sigma_1,\nu_1)$, whose second summand is
the pair of cells drawn on the right, and on which $g$ is again a discrete Morse-Bott
function. As in Figure~\ref{fig:move1}, the correction of the incidence numbers makes the
incidence number of the pair $(\sigma_2,\nu_2)$ zero.
\end{example}

\begin{figure}[htbp]
\centering
\begin{tikzpicture}[scale=0.9]

    \begin{scope}[shift={(-3.5,0)}]
      \node (a) at (0,0) {$\nu_1$};
      \node (b) at (0,1.5) {$\sigma_1$};
      \node (c) at (0,3) {$\tau_1$};
      \node (d) at (2,0) {$\nu_2$};
      \node (e) at (2,1.5) {$\sigma_2$};
      \node (f) at (2,3) {$\tau_2$};

      \node (m) at (0,4.5) {$\alpha_1$};
      \node (n) at (2,4.5) {$\alpha_2$};

      \draw (a) -- (b) node[near end, left] {$1$};
      \draw (b) -- (c) node[near end, left] {$1$};
      \draw (d) -- (e) node[near end, right] {$-1$};
      \draw (e) -- (f) node[near end, right] {$-1$};
      \draw (a) -- (e) node[near end, above] {$1$};
      \draw (d) -- (b) node[near end, above] {$-1$};
      \draw (e) -- (c) node[near end, above] {$-1$};
      \draw (b) -- (f) node[near end, above] {$1$};

      \draw (c) -- (m) node[near end, left] {$1$};
      \draw (f) -- (n) node[near end, right] {$-1$};
      \draw (f) -- (m) node[near end, above] {$-1$};
      \draw (c) -- (n) node[near end, above] {$1$};

      \node at (1,-0.5) {$\K$};
    \end{scope}

    \begin{scope}
      \node (a) at (0,0) {$0$};
      \node (b) at (0,1.5) {$1$};
      \node (c) at (0,3) {$4$};
      \node (d) at (2,0) {$0$};
      \node (e) at (2,1.5) {$1$};
      \node (f) at (2,3) {$3$};

      \node (m) at (0,4.5) {$5$};
      \node (n) at (2,4.5) {$3$};

      \draw (a) -- (b);
      \draw (b) -- (c);
      \draw (d) -- (e);
      \draw (e) -- (f);
      \draw (a) -- (e);
      \draw (d) -- (b);
      \draw (e) -- (c);
      \draw (b) -- (f);

      \draw (c) -- (m);
      \draw (f) -- (n);
      \draw (f) -- (m);
      \draw[->,red,thick] (c) -- (n);

      \node at (1,-0.5) {$g,~-\nabla_s g$};
    \end{scope}

    \begin{scope}[shift={(3.5,0)}]
      \node (c) at (0,3) {$4$};
      \node (m) at (0,4.5) {$5$};
      \node (d) at (2,0) {$0$};
      \node (e) at (2,1.5) {$1$};
      \node (f) at (2,3) {$3$};
      \node (n) at (2,4.5) {$3$};

      \node (p) at (3.8,0) {$0$};
      \node (q) at (3.8,1.5) {$1$};

      \draw (e) -- (f) node[near end, right] {$-1$};
      \draw (e) -- (c) node[near end, above] {$-1$};

      \draw (c) -- (m) node[near end, left] {$1$};
      \draw (f) -- (n) node[near end, right] {$-1$};
      \draw (f) -- (m) node[near end, above] {$-1$};
      \draw (c) -- (n) node[near end, above] {$1$};

      \draw (p) -- (q) node[near end, right] {$1$};

      \node at (1,-0.5) {$\K\{\sigma_1,\nu_1\}$};
      \node at (3.8,-0.5) {$E(\sigma_1,\nu_1)$};
    \end{scope}

\end{tikzpicture}

\caption{Three diagrams illustrating Move~(IV): a Lefschetz complex $\K$, the discrete
Morse-Bott function $g$ of Figure~\ref{fig:move3} with its strict gradient vector field
$-\nabla_s g$, and the two summands of
$\K_E=\K\{\sigma_1,\nu_1\}\sqcup E(\sigma_1,\nu_1)$.}
\label{fig:move4}
\end{figure}

\begin{remark}\label{rem:moveonsummand}
    Let $\K=\K_1\sqcup\K_2$ be a disjoint union of Lefschetz complexes, let $f\colon\K\to\R$
    be a discrete Morse-Bott function, and suppose that the data of a Move lie in $\K_1$,
    that is, $\mu,\kappa\in\K_1$ for Moves~(I) and~(IV), $\eta\in\K_1$ for Move~(II), and
    $g|_{\K_2}=f|_{\K_2}$ for Move~(III). The Move is applied to $\K$ and its hypothesis is a
    condition on $\K$. We record when this condition may be verified inside $\K_1$.

    If $w(\tau,\sigma)\neq0$ in $\K$, then $\tau$ and $\sigma$ lie in the same summand.
    Hence $U^\snc_{\K,f}(\sigma)=U^\snc_{\K_1,f}(\sigma)$ and
    $D^\snc_{\K,f}(\sigma)=D^\snc_{\K_1,f}(\sigma)$ for every $\sigma\in\K_1$, and the
    correction term $w(\tau,\kappa)w(\mu,\sigma)/w(\mu,\kappa)$ vanishes whenever $\tau$ or
    $\sigma$ lies in $\K_2$, so that
    \[
    \K\{\mu,\kappa\}=\K_1\{\mu,\kappa\}\sqcup\K_2.
    \]

    The hypotheses of Moves~(I) and~(II) are conditions on incidence numbers and on the
    values of $f$, $U^\snc$ and $D^\snc$ at cells of $\K_1$, and the incidence numbers
    between the two summands vanish. They may therefore be verified inside $\K_1$. For
    Move~(III), the hypothesis holds at every pair of cells of $\K_2$ because $g$ agrees with
    $f$ there, so it suffices to verify it inside $\K_1$. The hypothesis of Move~(IV)
    requires $f$ to agree with $\dim$ at every cell of $\K$ of dimension $\dim\mu$ or
    $\dim\kappa$, and nothing about the cells of $\K_2$ follows from the
    corresponding condition on $\K_1$. It must therefore be verified on $\K_2$ as well.

    In each case the Move leaves $\K_2$ unchanged and acts on $\K_1$ alone. For Move~(IV)
    this means that $\K$ is replaced by $\K_1\{\mu,\kappa\}\sqcup E(\mu,\kappa)\sqcup\K_2$.
\end{remark}

For a Lefschetz complex $\L$ and $q\in\Z_{\ge0}$, write $\L^{\ge q}$ and $\L^{\le q}$ for the
sets of cells of dimension at least $q$ and at most $q$, respectively. If $\tau$ and $\nu$
both lie in one of these sets, then so does every cell $\sigma$ with
$w(\tau,\sigma)w(\sigma,\nu)\neq0$, by (L1). Hence the sum in (L2) is unchanged, and
Proposition~\ref{prp:subcpx} shows that both are subcomplexes of $\L$.

\begin{lemma}\label{lem:stage}
    Let $n\in\Z_{\ge0}$, let $\M$ be a Lefschetz complex with $\M=\M^{\ge n}$, and let $g$ be
    a discrete Morse-Bott function on $\M$. Then there are a Lefschetz complex $\M'$ with
    $\M'=(\M')^{\ge n+1}$, a discrete Morse-Bott function $g'$ on $\M'$, and a Lefschetz
    complex $\E$ which is a disjoint union of finitely many elementary blocks, each
    consisting of one $n$-cell and one $(n+1)$-cell, such that
    \[
    (\M,g)\sim(\E\sqcup\M',\ \dim\sqcup g').
    \]
    Moreover $P_t(\M)=P_t(\M')+b_n^{\M}\,t^n$, and $b_n^{\M}$ equals the number of cells
    removed by Move~(II) in the construction below.
\end{lemma}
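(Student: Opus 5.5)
The plan is to eliminate all $n$-cells of $\M$ by a finite sequence of Moves, done in three phases. First clean up the gradient pairs and equal-value incidences near dimension $n$. Then use Move~(III) to make the function agree with $\dim$ in dimensions $n$ and $n+1$. Finally, split off elementary blocks by Move~(IV) and remove the leftover isolated $n$-cells by Move~(II). Every Move below removes cells of dimension $n$, $n+1$ or $n+2$, and $\dim^{-1}(k)$ is finite, so each phase terminates. By Definition~\ref{def:sim} and Propositions~\ref{prp:move1}--\ref{prp:move4}, the relation $\sim$ is preserved throughout.

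\emph{Phase 1.} Since $\M=\M^{\ge n}$, no $n$-cell has $D^\snc=1$. I apply Move~(I)(a) to every pair $\kappa\to_g\mu$ with $\dim\kappa\in\{n,n+1\}$. By Lemma~\ref{lem:cancelwc}, the values $U^\snc$ and $D^\snc$ of the surviving cells do not change, so the pairs not yet used remain arrows and the process continues. Afterwards every cell of dimension $n$ or $n+1$ is weakly critical. Next I apply Move~(I)(b) to every pair $(\mu,\kappa)$ with $w(\mu,\kappa)\neq0$, $g(\mu)=g(\kappa)$ and $\dim\kappa\in\{n,n+1\}$. Lemma~\ref{lem:cancelwc} again keeps all cells weakly critical. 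At the end of this phase, whenever $w(\tau,\sigma)\neq0$ and $\dim\sigma\in\{n,n+1\}$, weak criticality gives $g(\tau)\ge g(\sigma)$, and equality has been excluded, so $g(\tau)>g(\sigma)$.

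\emph{Phase 2.} I apply Move~(III) with the function $h$ defined by $h=\dim$ on the cells of dimension $n$ and $n+1$, and $h=g+C$ on the cells of dimension at least $n+2$. Here $C$ is chosen so that $g(\tau)+C>n+1$ for each of the finitely many $(n+2)$-cells $\tau$. Incidences among cells of dimension at least $n+2$ are ordered the same way by $h$ and by $g$, since both sides are shifted by the same constant $C$. Every other incidence $w(\tau,\sigma)\neq0$ has $\dim\sigma\in\{n,n+1\}$. For such a pair $g(\tau)>g(\sigma)$ holds by Phase~1, and $h(\tau)>h(\sigma)$ holds by construction. So the hypothesis of Proposition~\ref{prp:move3} is satisfied.

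\emph{Phase 3.} While some $(n+1)$-cell $\mu$ and $n$-cell $\kappa$ in the non-block part satisfy $w(\mu,\kappa)\neq0$, I apply Move~(IV) to that summand, following Remark~\ref{rem:moveonsummand}. The blocks already split off carry $h=\dim$, so the hypothesis that $h$ agrees with $\dim$ in dimensions $n$ and $n+1$ continues to hold on the whole complex. When no such pair remains, every $n$-cell of the non-block part is isolated, since there are no $(n-1)$-cells. I remove these cells by Move~(II). What remains is $\E\sqcup\M'$ with $\M'=(\M')^{\ge n+1}$, and I take $g'=h|_{\M'}$, which is a discrete Morse-Bott function by Lemma~\ref{lem:Rtdisjoint}. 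This gives the relation $(\M,g)\sim(\E\sqcup\M',\dim\sqcup g')$. I expect the main obstacle to be Phase~2, namely finding a single reordering $h$ that equals $\dim$ in two dimensions without destroying the order with the $(n+2)$-cells. The constant shift is designed to handle this, but it only works because Phase~1 has already removed every equal-value incidence at the $(n+1)$/$(n+2)$ level.

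For the Betti count, only Move~(II) changes $P_t$, and each application lowers it by $t^n$. Write $r$ for the number of cells removed by Move~(II). Then $P_t(\M)=P_t(\E)+P_t(\M')+r\,t^n$ by Lemma~\ref{lem:disjointsum}. The series $P_t(\E)$ vanishes by Lemma~\ref{lem:Rtblock}, and $P_t(\M')$ has no $t^n$ term because $\M'$ has no $n$-cells. Comparing coefficients of $t^n$ gives $b_n^{\M}=r$, and hence $P_t(\M)=P_t(\M')+b_n^{\M}t^n$.
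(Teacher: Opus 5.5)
\textbf{Gap in Phase~1.} Your three phases follow the paper's construction, but Phase~1 as written uses Move~(I), case~(b), outside its hypotheses. Case~(b) requires \emph{both} cells to be weakly critical. You cancel arrows $\kappa\to_g\mu$ only for $\dim\kappa\in\{n,n+1\}$, and you conclude only that the cells of dimension $n$ and $n+1$ are weakly critical. Now take an equal-valued pair $(\mu,\kappa)$ with $\dim\kappa=n+1$. The $(n+2)$-cell $\mu$ could a priori still be the lower end of an arrow $\mu\to_g\tau$ to an $(n+3)$-cell, and you never cancelled such arrows. In that case Move~(I)(b) is not available for $(\mu,\kappa)$. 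This matters: if such a pair survived, your claim ``equality has been excluded'' would fail. The Move~(III) check in Phase~2 would then break at that incidence, since $g(\mu)=g(\kappa)$ while $h(\mu)>n+1=h(\kappa)$. So the real subtlety sits here, not in Phase~2.

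\textbf{Two repairs.} The paper handles this by running Move~(I), case~(a), also on arrows starting at $(n+2)$-cells, that is, for $j=n,n+1,n+2$. Then $u^\snc_{n+2}=0$ and $d^\snc_{n+2}=u^\snc_{n+1}=0$, so every cell of dimension at most $n+2$ is weakly critical before case~(b) is ever used.

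Alternatively, you can keep your version and prove the missing fact directly.
\begin{itemize}
\item First, $D^\snc(\mu)=0$, because $u^\snc_{n+1}=0$.
\item Next, suppose $U^\snc(\mu)=1$, with $\tau$ the counted cell. Lemma~\ref{lem:incidence}, applied to $w(\tau,\mu)w(\mu,\kappa)\neq0$, gives $\widetilde\mu\neq\mu$ with $w(\tau,\widetilde\mu)w(\widetilde\mu,\kappa)\neq0$.
\item Since $D^\snc(\tau)\le1$ and $U^\snc(\kappa)=0$, this yields $g(\kappa)\le g(\widetilde\mu)\le g(\tau)<g(\mu)=g(\kappa)$, a contradiction.
\end{itemize}
Lemma~\ref{lem:cancelwc} keeps $u^\snc_{n+1}=0$ after each cancellation, so this argument applies at every step. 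That includes equal-valued pairs created by the incidence corrections.

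With either repair, the rest of your argument goes through as in the paper: Phase~2, Phase~3 and the Betti count. One small point: $g'=h|_{\M'}$ is a discrete Morse-Bott function by restricting from $\E\sqcup\M'$. This is the converse direction of what Lemma~\ref{lem:Rtdisjoint} states, although its proof contains it.
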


\begin{proof}
    Throughout, $U$, $D$, $U^\snc$ and $D^\snc$ refer to the current complex and function.
    For $j=n,n+1,n+2$, while $u_j^\snc>0$, choose a $j$-cell $\sigma$ with
    $U^\snc(\sigma)=1$ and let $\tau$ be the $(j+1)$-cell counted in it. Then
    $\sigma\to_g\tau$, so Move~(I), case~(a), applies to $(\tau,\sigma)$. By
    Lemma~\ref{lem:cancelwc}, each application decreases $u_j^\snc$ by exactly one and does
    not change $U^\snc$ or $D^\snc$ on the remaining cells. Hence, after finitely many
    applications, we obtain
    \[
    u_n^\snc=u_{n+1}^\snc=u_{n+2}^\snc=0.
    \]
    Denote the resulting complex by $\M_{\mathrm{wc}}$. We have $d_n^\snc=0$ because
    $\M_{\mathrm{wc}}$ has no $(n-1)$-cells, and Lemma~\ref{lem:countMB} gives
    \[
    d_{n+1}^\snc=u_n^\snc=0,\qquad d_{n+2}^\snc=u_{n+1}^\snc=0.
    \]
    Thus every cell of $\M_{\mathrm{wc}}$ of dimension at most $n+2$ is weakly critical.

    Starting from $\M_{\mathrm{wc}}$, apply Move~(I), case~(b), whenever there is an
    equal-valued pair with nonzero incidence number whose dimensions are $(n+2,n+1)$ or
    $(n+1,n)$. This is possible because all cells involved are weakly critical. By
    Lemma~\ref{lem:cancelwc}, the values of $U^\snc$ and $D^\snc$ are unchanged on the
    remaining cells. Moreover, the removed cells are weakly critical and therefore contribute to
    none of $u_k^\snc$ and $d_k^\snc$, so these numbers are unchanged for every $k$. Each
    application removes an $n$-cell or an $(n+1)$-cell, and hence the process terminates after
    finitely many applications. Denote the resulting complex by $\M_{\mathrm{cr}}$.

    Every $n$-cell and every $(n+1)$-cell of $\M_{\mathrm{cr}}$ is critical for $g$. Indeed,
    these cells remain weakly critical. Hence, if an $n$-cell or an $(n+1)$-cell $\sigma$
    were not critical, a cell counted in $U(\sigma)$ or $D(\sigma)$ would have the same
    $g$-value as $\sigma$. Since there are no $(n-1)$-cells, the cell $\sigma$ would then
    belong to an equal-valued pair of dimensions $(n+1,n)$ or $(n+2,n+1)$, contradicting the
    termination condition.

    If $\M_{\mathrm{cr}}$ has a cell of dimension $n+2$, put
    $c:=(n+2)-\min\{g(\tau')\mid \dim\tau'=n+2\}$, which is well defined since there are
    finitely many $(n+2)$-cells. Otherwise put $c:=0$. Define
    $g'\colon\M_{\mathrm{cr}}\to\R$ by
    $g'(\sigma):=\dim\sigma$ for $\dim\sigma\le n+1$ and $g'(\sigma):=g(\sigma)+c$ for
    $\dim\sigma\ge n+2$. Let $\alpha,\beta$ be cells with $w(\alpha,\beta)\neq0$. If
    $\dim\alpha\le n+1$, then $g(\beta)<g(\alpha)$ because $\alpha$ is critical for $g$, and
    $g'(\beta)=\dim\beta<\dim\alpha=g'(\alpha)$. If $\dim\alpha=n+2$, then
    $g(\beta)<g(\alpha)$ because $\beta$ is critical for $g$, and
    $g'(\beta)=n+1<n+2\le g'(\alpha)$ by the choice of $c$. If $\dim\alpha\ge n+3$, then $g'$
    differs from $g$ by the constant $c$ at both cells. In every case
    $g(\beta)<g(\alpha)$ holds if and only if $g'(\beta)<g'(\alpha)$, and
    $g(\alpha)<g(\beta)$ holds if and only if $g'(\alpha)<g'(\beta)$. Hence Move~(III)
    applies and $(\M_{\mathrm{cr}},g)\sim(\M_{\mathrm{cr}},g')$.

    The function $g'$ takes the value $n+1$ at every $(n+1)$-cell and the value $n$ at every
    $n$-cell. Write $\widetilde{\M}$ for the summand of the current complex other than the
    elementary blocks already split off, so that $\widetilde{\M}=\M_{\mathrm{cr}}$ initially.
    While there are an $n$-cell $\kappa$ and an $(n+1)$-cell $\mu$ of
    $\widetilde{\M}$ with
    $w(\mu,\kappa)\neq0$, apply
    Move~(IV) to $(\mu,\kappa)$. Its hypothesis holds on the whole current complex. On
    $\widetilde{\M}$, the function $g'$ agrees with $\dim$ at every $n$-cell and
    $(n+1)$-cell, and on
    each elementary block already split off it agrees with $\dim$. By
    Remark~\ref{rem:moveonsummand}, this replaces $\widetilde{\M}$ by
    $\widetilde{\M}\{\mu,\kappa\}\sqcup E(\mu,\kappa)$. We then replace
    $\widetilde{\M}$ by $\widetilde{\M}\{\mu,\kappa\}$. The $n$-cells and
    $(n+1)$-cells of the new $\widetilde{\M}$ are those of the previous one other than
    $\kappa$ and $\mu$, and $g'$ agrees with $\dim$ on the newly adjoined elementary block.
    Thus the same global hypothesis is satisfied before the next application. Each
    application removes one $n$-cell, so the process terminates.

    Every $n$-cell $\eta$ of $\widetilde{\M}$ is isolated in $\widetilde{\M}$. Indeed, by
    the termination condition of the preceding procedure, $w(\mu,\eta)=0$ for every
    $(n+1)$-cell $\mu$. Hence $w(\tau,\eta)=0$ for every cell $\tau$ by (L1), and
    $w(\eta,\nu)=0$ for every cell $\nu$ because $\widetilde{\M}$ has no $(n-1)$-cells.
    By Remark~\ref{rem:moveonsummand}, we may therefore apply Move~(II) to every $n$-cell of
    $\widetilde{\M}$. The process terminates because there are finitely many $n$-cells.

    Let $\E$ be the disjoint union of the elementary blocks produced by the preceding
    applications of Move~(IV), and let $\M'$ be the complex remaining after these
    applications of Move~(II). Every elementary block in $\E$ consists of one $n$-cell and
    one $(n+1)$-cell, and every cell of $\M'$ has dimension at least $n+1$. Moreover, $g'$ is
    a discrete Morse-Bott function on $\M'$, and $g'$ coincides with $\dim$ on $\E$. By
    Propositions~\ref{prp:move1},~\ref{prp:move2},~\ref{prp:move3}
    and~\ref{prp:move4}, every Move used above preserves the
    equivalence class of the Morse-Bott pair. The transitivity of $\sim$ therefore gives
    \[
    (\M,g)\sim(\E\sqcup\M',\ \dim\sqcup g').
    \]

    Finally, we compare Poincar\'{e} series. By
    Propositions~\ref{prp:move1},~\ref{prp:move3} and~\ref{prp:move4},
    all applications of Moves~(I), (III) and~(IV) made above leave the
    Poincar\'{e} series of the whole complex unchanged. Each application
    of Move~(II) lowers it by $t^n$ by Proposition~\ref{prp:move2}, since
    the removed cell is an $n$-cell. Writing $r$ for the number of these
    applications, we obtain
    \[
    P_t(\M)=P_t(\E\sqcup\M')+r\,t^n.
    \]
    By Lemmas~\ref{lem:disjointsum} and~\ref{lem:Rtblock},
    \[
    P_t(\E\sqcup\M')=P_t(\E)+P_t(\M')=P_t(\M').
    \]
    Moreover, every cell of $\M'$ has dimension at least $n+1$, and hence
    $b_n^{\M'}=0$. Comparing the coefficients of $t^n$ therefore gives
    $r=b_n^{\M}$.
\end{proof}

\begin{proposition}\label{prp:stage}
    For every $n\in\Z_{\ge0}$ there are a Lefschetz complex $\E_n$ with finitely many
    cells, which is a disjoint union of
    elementary blocks, a Lefschetz complex $\M_n$ with $\M_n=\M_n^{\ge n}$, and a discrete
    Morse-Bott function $f_n$ on $\M_n$, such that $\E_n\subset\E_{n+1}$, every cell of
    $\E_{n+1}\setminus\E_n$ has dimension $n$ or $n+1$, and
    \[
    (\K,f)\sim(\E_n\sqcup\M_n,\ \dim\sqcup f_n).
    \]
    Moreover $P_t(\K)=P_t(\M_n)+\sum_{m=0}^{n-1}b_m^{\M_m}\,t^m$.
\end{proposition}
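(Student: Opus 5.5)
We argue by induction on $n$, repeatedly applying Lemma~\ref{lem:stage} to the non-block summand. For $n=0$ we put $\E_0:=\emptyset$, $\M_0:=\K$ and $f_0:=f$. The condition $\M_0=\M_0^{\ge0}$ holds trivially, $(\K,f)\sim(\E_0\sqcup\M_0,\dim\sqcup f_0)$ is reflexivity of $\sim$, and the Poincar\'{e} series identity has an empty sum.

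For the inductive step, assume $\E_n$, $\M_n$ and $f_n$ have been constructed. We apply Lemma~\ref{lem:stage} to $(\M_n,f_n)$ and obtain $\M_{n+1}:=\M_n'$ with $\M_{n+1}=\M_{n+1}^{\ge n+1}$, a function $f_{n+1}:=f_n'$, and a finite disjoint union $\E'$ of elementary blocks of dimensions $n$ and $n+1$. These satisfy
\[
(\M_n,f_n)\sim(\E'\sqcup\M_{n+1},\dim\sqcup f_{n+1}).
\]
We set $\E_{n+1}:=\E_n\sqcup\E'$. It is finite, it contains $\E_n$, and its new cells have dimension $n$ or $n+1$.

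The key point is transporting the equivalence through the disjoint union with $\E_n$. Here I would not rely on Remark~\ref{rem:moveonsummand}, since the Move~(IV) hypothesis must hold globally. Instead I would use Lemma~\ref{lem:Rtdisjoint}, which gives
\[
R_t(\E_n\sqcup\M_n,\dim\sqcup f_n)=R_t(\E_n,\dim)+R_t(\M_n,f_n).
\]
The second summand equals $R_t(\E'\sqcup\M_{n+1},\dim\sqcup f_{n+1})$. Applying Lemma~\ref{lem:Rtdisjoint} once more and recombining yields $R_t(\E_{n+1}\sqcup\M_{n+1},\dim\sqcup f_{n+1})$. This lemma needs $\dim$ to be discrete Morse-Bott on each block, which is Lemma~\ref{lem:dimMB}. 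It also needs the disjoint unions to be Lefschetz complexes, which follows from Lemma~\ref{lem:disjoint} because all pieces are finite or subcomplex-like with finitely many cells per dimension. Transitivity of $\sim$ then gives $(\K,f)\sim(\E_{n+1}\sqcup\M_{n+1},\dim\sqcup f_{n+1})$.

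The cells of $\E_{n+1}$, $\M_{n+1}$ and $\E_n\sqcup\M_n$ must be distinct labels. Since $\E'$ uses the cancelled cells of $\M_n$, which have been removed from $\M_{n+1}$ and were never in $\E_n$, the disjointness is automatic.

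Finally, Lemma~\ref{lem:stage} gives $P_t(\M_n)=P_t(\M_{n+1})+b_n^{\M_n}t^n$. Substituting this into the inductive identity gives the formula for $n+1$.

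The main obstacle I expect is justifying the splitting of $R_t$ additively over $\E_n\sqcup(\cdot)$ when the stage construction is applied to $\M_n$ alone. Lemma~\ref{lem:Rtdisjoint} handles this cleanly, since $\sim$ is defined purely by equality of $R_t$, so no Move needs to be verified on the whole complex.
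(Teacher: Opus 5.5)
Your proof is correct and follows the paper's argument: the paper also obtains the equivalence at stage $n+1$ by combining Lemma~\ref{lem:Rtdisjoint} (with Lemma~\ref{lem:dimMB} on $\E_n$) and the equivalence from Lemma~\ref{lem:stage}, and it proves the Poincar\'{e} series identity by the same induction. The only difference is that the paper also checks that the Moves of Lemma~\ref{lem:stage} can be performed on the whole complex $\E_n\sqcup\M_n$; the Move~(IV) hypothesis holds on $\E_n$ because the function there is $\dim$, so your worry on that point is unfounded. This check is not needed for the equivalence, but it supports the later statements that $\E$ is produced by Moves and that Corollary~\ref{cor:count} counts Move~(II) and Move~(IV) applications.
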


\begin{proof}
    Put $\E_0:=\emptyset$, $\M_0:=\K$ and $f_0:=f$. Assume that the data have been
    constructed for $n$. Apply each Move in the construction of
    Lemma~\ref{lem:stage} for $(\M_n,f_n)$ to the whole complex
    $\E_n\sqcup\M_n$, leaving the summand $\E_n$ unchanged. By
    Remark~\ref{rem:moveonsummand}, the hypotheses of Moves~(I), (II) and~(III)
    may be verified in the summand obtained from $\M_n$. Only the hypothesis of
    Move~(IV) must also be verified on $\E_n$. The function on $\E_n$ is $\dim$,
    so this hypothesis holds there. Together with the verification made at each
    application of Move~(IV) in the proof of Lemma~\ref{lem:stage}, this verifies
    the hypothesis on the whole current complex.

    Apply Lemma~\ref{lem:stage} to $(\M_n,f_n)$ and put
    $\E_{n+1}:=\E_n\sqcup\E$, $\M_{n+1}:=\M'$ and $f_{n+1}:=g'$. By
    Lemma~\ref{lem:Rtdisjoint} and the equivalence supplied by
    Lemma~\ref{lem:stage},
    \begin{align*}
    R_t(\E_{n+1}\sqcup\M_{n+1},\ \dim\sqcup f_{n+1})
    &=R_t(\E_n,\dim)+R_t(\E\sqcup\M',\ \dim\sqcup g')\\
    &=R_t(\E_n\sqcup\M_n,\ \dim\sqcup f_n).
    \end{align*}
    Together with the inductive hypothesis and the transitivity of $\sim$, this gives
    \[
    (\K,f)\sim
    (\E_{n+1}\sqcup\M_{n+1},\ \dim\sqcup f_{n+1}).
    \]

    By Lemma~\ref{lem:stage}, $\E$ is a disjoint union of finitely many
    elementary blocks, each consisting of one $n$-cell and one $(n+1)$-cell.
    Hence $\E_{n+1}\setminus\E_n=\E$ has finitely many cells, all of
    dimension $n$ or $n+1$.

    The equality of Poincar\'{e} series follows by induction on $n$.
    For $n=0$, it holds because $\M_0=\K$ and the sum is empty.
    Assume that it holds for $n$. By Lemma~\ref{lem:stage},
    \[
    P_t(\M_n)=P_t(\M_{n+1})+b_n^{\M_n}\,t^n.
    \]
    Therefore,
    \begin{align*}
    P_t(\K)
    &=P_t(\M_n)+\sum_{m=0}^{n-1}b_m^{\M_m}\,t^m\\
    &=P_t(\M_{n+1})+\sum_{m=0}^{n}b_m^{\M_m}\,t^m,
    \end{align*}
    which proves the equality for $n+1$.
\end{proof}

We are now ready to prove the elementary block reduction theorem stated in the
Introduction.

\begin{proof}[Proof of Theorem~\ref{thm:elementary}]
    Let $\E_n$, $\M_n$, and $f_n$ be as in Proposition~\ref{prp:stage}, and
    put $\E:=\bigcup_{n\ge0}\E_n$. The cells of $\E_{n+1}\setminus\E_n$ have dimension $n$ or
    $n+1$, so $\dim^{-1}(k)$ is finite for every $k$ and $\E$ is a Lefschetz complex, which is
    a disjoint union of elementary blocks.

    Fix $k\in\Z_{\ge0}$ and put $n:=k+2$. All cells of $\M_n$ have dimension at least $n$, so
    $C_k(\M_n)=C_{k+1}(\M_n)=\{0\}$, and therefore $\rank B_k(\M_n)=0$,
    $d^\snc_{k+1}(\M_n,f_n)=0$ and $\rank B_k(\W(\M_n,f_n))=0$, that is, the coefficient
    $R_t(\M_n,f_n)_k$ of $t^k$ vanishes.
    By Proposition~\ref{prp:stage}, Lemma~\ref{lem:Rtdisjoint} and Lemma~\ref{lem:dimMB},
    \[
    R_t(\K,f)_k=R_t(\E_n,\dim)_k+R_t(\M_n,f_n)_k=\rank B_k(\E_n).
    \]
    Finally, the cells of $\E\setminus\E_n$ have dimension at least $n$, so
    \[
    C_k(\E)=C_k(\E_n)
    \qquad\text{and}\qquad
    C_{k+1}(\E)=C_{k+1}(\E_n).
    \]
    Therefore,
    \[
    \rank B_k(\E)=\rank B_k(\E_n).
    \]
    It follows that
    $R_t(\K,f)_k=\rank B_k(\E)=R_t(\E,\dim)_k$ for every $k$.
\end{proof}

\begin{corollary}\label{cor:finitemoves}
    If $\K$ has finitely many cells, then $\E$ is obtained from $\K$ by finitely many Moves.
\end{corollary}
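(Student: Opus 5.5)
If $\K$ has finitely many cells, let $N$ be the largest dimension of a cell of $\K$ (if $\K$ is empty there is nothing to prove). The plan is to show that the recursion of Proposition~\ref{prp:stage} terminates after finitely many stages, that each stage uses finitely many Moves, and that from some stage on no further Moves are applied, so that $\E=\E_n$ for some finite $n$.

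\textbf{Finitely many Moves per stage.} Fix a stage $n$, i.e.\ one application of Lemma~\ref{lem:stage} to $(\M_n,f_n)$. Each application of Moves~(I), (II) and~(IV) in that construction removes at least one cell of $\M_n$ of dimension between $n$ and $n+3$. Move~(III) is applied exactly once. Since $\M_n$ is obtained from $\K$ by cancellations, removals and the splitting off of blocks, it has no more cells in each dimension than $\K$; in particular it is finite. Hence each stage uses finitely many Moves.

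\textbf{Termination of the recursion.} No Move creates a cell of dimension exceeding those already present: the quotient $\K\{\mu,\kappa\}$ and the removal $\K\{\eta\}$ only delete cells, and $E(\mu,\kappa)$ consists of copies of cells already in the complex. By induction, every cell of $\M_n$ therefore has dimension at most $N$. Since $\M_n=\M_n^{\ge n}$, we get $\M_{N+1}=\emptyset$.

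\textbf{No Moves after stage $N+1$.} For $n\ge N+1$, the stage applied to the empty complex involves no Moves: there are no cells with $U^\snc=1$, no equal-valued pairs, and no isolated cells, and Move~(III) on the empty complex is the identity. So $\E_{n+1}=\E_n$ for all $n\ge N+1$, and $\E=\bigcup_n\E_n=\E_{N+1}$. Consequently $\E$ is produced from $\K$ by the finitely many Moves used in stages $0,\dots,N$.

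The only point needing care is the claim that dimensions and cell counts never grow. This is immediate from Definition~\ref{def:quotient}, Lemma~\ref{lem:removeisolated} and the form $\K\{\mu,\kappa\}\sqcup E(\mu,\kappa)$ of Move~(IV), since a cancelled pair is replaced by a block with the same two cells.
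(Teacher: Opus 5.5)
Your proposal is correct and follows the paper's own argument. With $N$ the top dimension of $\K$, the cells of $\M_{N+1}$ are cells of $\K$ yet have dimension at least $N+1$, so $\M_{N+1}=\emptyset$ and $\E=\E_{N+1}$; this is reached after the $N+1$ stages $0,\dots,N$, each of which uses finitely many Moves. You also make explicit two points the paper leaves implicit: that the cells of each $\M_n$ form a subset of those of $\K$, and why each stage terminates.
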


\begin{proof}
    Consider the complex $\E$ constructed in the proof of
    Theorem~\ref{thm:elementary}. The assertion is immediate if $\K=\emptyset$.
    Otherwise, let $N$ be the largest
    dimension of a cell of $\K$. Since every cell of $\M_{N+1}$ has
    dimension at least $N+1$, we have $\M_{N+1}=\emptyset$ and hence $\E=\E_{N+1}$. Each
    application of Lemma~\ref{lem:stage} in Proposition~\ref{prp:stage} consists of finitely
    many Moves, and only $N+1$ such applications are needed.
\end{proof}

\begin{corollary}\label{cor:count}
    In the construction of Proposition~\ref{prp:stage}, the following statements hold for
    every $k\in\Z_{\ge0}$:
    \begin{enumerate}[(1)]
        \item $R_t(\K,f)_k$ equals the number of elementary blocks $E(\mu,\kappa)$ with
              $\dim\mu=k+1$ produced by Move~(IV),
        \item $b_k^\K$ equals the number of $k$-cells removed by Move~(II).
    \end{enumerate}
    In particular, the coefficients of $R_t(\K,f)$ are nonnegative integers, and both
    counts are independent of the choices made in the construction.
\end{corollary}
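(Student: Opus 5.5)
For (1), I would start from the identity established in the proof of Theorem~\ref{thm:elementary}: for each $k$, with $n:=k+2$,
\[
R_t(\K,f)_k=\rank B_k(\E_n)=\rank B_k(\E).
\]
It then suffices to compute $\rank B_k(\E)$ for a disjoint union of elementary blocks. By Lemma~\ref{lem:disjointsum}, $\rank B_k(\E)=\sum\rank B_k(E(\mu,\kappa))$ over the blocks. Lemma~\ref{lem:Rtblock} gives $\rank B_k(E(\mu,\kappa))=1$ exactly when $\dim\mu-1=k$, and $0$ otherwise. Hence $R_t(\K,f)_k$ equals the number of blocks with $\dim\mu=k+1$. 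This count is finite because only finitely many cells of $\E$ lie in each dimension. Every block of $\E$ arises from an application of Move~(IV) in some stage of Lemma~\ref{lem:stage}, since $\E=\bigcup_n\E_n$ and each $\E_{n+1}\setminus\E_n$ is the union of blocks adjoined by Move~(IV) at stage $n$. This proves (1).

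For (2), I would use the Poincar\'e series identity of Proposition~\ref{prp:stage}:
\[
P_t(\K)=P_t(\M_n)+\sum_{m=0}^{n-1}b_m^{\M_m}t^m.
\]
Take $n=k+1$. All cells of $\M_{k+1}$ have dimension at least $k+1$, so $b_k^{\M_{k+1}}=0$ and the coefficient of $t^k$ in $P_t(\M_{k+1})$ vanishes. Comparing coefficients of $t^k$ gives $b_k^\K=b_k^{\M_k}$. By the last assertion of Lemma~\ref{lem:stage}, $b_k^{\M_k}$ is the number of cells removed by Move~(II) at stage $k$, and these are all $k$-cells. I then need to check that no $k$-cells are removed by Move~(II) at any other stage. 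At stage $n$, Move~(II) is applied only to $n$-cells of $\widetilde{\M}$, so the $k$-cells removed by Move~(II) are exactly those removed at stage $k$. This gives (2).

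Finally, nonnegativity and integrality of the coefficients follow immediately from (1), since each coefficient is a cardinality. Independence of choices holds because $R_t(\K,f)$ and $b_k^\K$ depend only on $(\K,f)$ and $\K$, not on the construction, and (1) and (2) identify the counts with these intrinsic quantities. The main point requiring care is the bookkeeping in (2): making sure that the cells counted by Move~(II) across all stages are sorted correctly by dimension. That is handled by noting that stage $n$ removes only $n$-cells.
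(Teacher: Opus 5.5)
Your proposal is correct and follows essentially the same route as the paper: in (1) you compute $\rank B_k(\E)$ block by block, whereas the paper sums $R_t(E(\mu_i,\kappa_i),\dim)=t^{\dim\mu_i-1}$ via Lemma~\ref{lem:Rtdisjoint}, which is the same computation, and in (2) you compare coefficients of $t^k$ in the identity of Proposition~\ref{prp:stage} with $n=k+1$, exactly as the paper does. You also state explicitly that at stage $n$ Move~(II) removes only $n$-cells, so no $k$-cells are removed at other stages; the paper leaves this point implicit.
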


\begin{proof}
    (1) By Theorem~\ref{thm:elementary}, Lemma~\ref{lem:Rtdisjoint} and
    Lemma~\ref{lem:Rtblock},
    \[
    R_t(\K,f)
    =R_t(\E,\dim)
    =\sum_i R_t(E(\mu_i,\kappa_i),\dim)
    =\sum_i t^{\dim\mu_i-1},
    \]
    where $\E$ is the disjoint union of the elementary blocks $E(\mu_i,\kappa_i)$ produced
    by Move~(IV). Therefore, $R_t(\K,f)_k$ is the number of these blocks with
    $\dim\mu_i=k+1$.

    (2) Fix $k$. By Proposition~\ref{prp:stage} with $n=k+1$,
    \[
    P_t(\K)
    =P_t(\M_{k+1})+\sum_{m=0}^{k}b_m^{\M_m}t^m.
    \]
    Since every cell of $\M_{k+1}$ has dimension at least $k+1$, we have
    $b_k^{\M_{k+1}}=0$. Comparing the coefficients of $t^k$ gives
    \[
    b_k^\K=b_k^{\M_k}.
    \]
    By Lemma~\ref{lem:stage}, this is the number of $k$-cells removed by Move~(II) at
    stage $k$.
\end{proof}

We are now ready to prove the Morse-Bott inequality stated in the Introduction.

\begin{proof}[Proof of Theorem~\ref{thm:MB}]
    The identity follows from Lemma~\ref{lem:MBpoly}, and the assertion about the
    coefficients follows from Corollary~\ref{cor:count}.
\end{proof}

\begin{corollary}[Morse-Bott inequality for CW complexes]\label{cor:MBCW}
    Let $M$ be a finite CW complex, and let
    $\K_M$ be its associated Lefschetz complex as in Example~\ref{ex:CWasLef}.
    If $f$ is a discrete Morse-Bott function on $M$ in the sense
    of~\cite{nishikawa2025DMBT}, then $f$ is also a discrete Morse-Bott function on
    $\K_M$, and
    \[
        R_t(M,f)=R_t(\K_M,f).
    \]
    In particular, $R_t(M,f)$ has nonnegative integer coefficients.
\end{corollary}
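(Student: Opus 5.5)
The plan is to reduce Corollary~\ref{cor:MBCW} to Theorem~\ref{thm:MB}. The reduction rests on checking that the definitions of~\cite{nishikawa2025DMBT} for a finite CW complex $M$ coincide with the ones given here for $\K_M$. First, recall from~\cite{nishikawa2025DMBT} that a discrete Morse-Bott function on $M$ is defined through the numbers of faces and cofaces $\tau^{(k+1)}>\sigma^{(k)}$ with strict inequality of $f$-values, where faces are taken with nonzero cellular incidence. I would verify that, under the convention of Example~\ref{ex:CWasLef} ($w(\tau,\sigma)$ is the coefficient of $\sigma$ in $\partial\tau$), the set $\{\tau\mid w(\tau,\sigma)\neq0,\ f(\sigma)>f(\tau)\}$ is exactly the set counted in~\cite{nishikawa2025DMBT}. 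Hence $U^\snc$ and $D^\snc$ computed in $\K_M$ agree with the corresponding counts on $M$. This immediately gives that $f$ is a discrete Morse-Bott function on $\K_M$, and that the weakly critical cells and weakly critical sets $\mathcal{C}_f$ coincide in the two settings.

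Second, I would compare the Poincar\'e series. The cellular chain complex of $M$ over $\R$ is by construction $C_\bullet(\K_M)$, so $P_t(M)=P_t(\K_M)$. For each weakly critical set $C$, the paper~\cite{nishikawa2025DMBT} regards $C$ as a subcomplex (or as a chain complex with the restricted cellular boundary). Its chain complex is the one of the subcomplex $C\subset\K_M$ given by Proposition~\ref{prp:wcsubcpx}, namely $w|_{C\times C}$. So $P_t(C)$ agrees as well. With real (field) coefficients, the Betti numbers used in both settings coincide; if~\cite{nishikawa2025DMBT} uses another coefficient field, I would note that the statement is taken with the same field and that the proof here works verbatim over any field.

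Third, both $R_t(M,f)$ and $R_t(\K_M,f)$ satisfy
\[
(1+t)R_t=\sum_{C\in\mathcal{C}_f}P_t(C)-P_t(\cdot),
\]
by the identity of~\cite{nishikawa2025DMBT} and by Lemma~\ref{lem:MBpoly}. The right-hand sides agree by the previous two steps. Since $1+t$ is not a zero divisor in the ring of formal power series, $R_t(M,f)=R_t(\K_M,f)$. Nonnegativity then follows from Theorem~\ref{thm:MB} (equivalently Corollary~\ref{cor:count}).

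The main obstacle is the first step: making sure the definition in~\cite{nishikawa2025DMBT} really uses nonzero incidence (rather than the face relation in the closure, or regularity assumptions). If that paper instead uses a combinatorial face relation, I would check that for the relevant count only codimension-one faces with nonzero incidence matter. Failing that, I would restrict the corollary to the class where the two notions agree.
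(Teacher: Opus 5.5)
Your proposal is correct and follows essentially the same route as the paper. Like the paper, you identify $U^\snc$, $D^\snc$, the weakly critical sets and all the relevant Betti numbers in the two settings, deduce $R_t(M,f)=R_t(\K_M,f)$, and then invoke Theorem~\ref{thm:MB}. The only difference is in the middle step: you obtain the equality of remainders by cancelling the non-zero-divisor $1+t$ in the two Morse-Bott identities, in the spirit of Remark~\ref{rem:Ptcriterion}, whereas the paper reads it off from the defining formulas of the two remainder terms. Your version has the mild advantage of relying only on the identity established in~\cite{nishikawa2025DMBT}, not on the exact form of that paper's definition of $R_t(M,f)$.
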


\begin{proof}
    The CW complex $M$ and its associated Lefschetz complex $\K_M$ have the same cells,
    dimension function, and incidence numbers. Hence the sets
    $U^{\snc}(\sigma)$ and $D^{\snc}(\sigma)$ computed in $M$ agree with those computed
    in $\K_M$ for every cell $\sigma$. Thus, $f$ is a discrete Morse-Bott function on
    $\K_M$, and the weakly critical sets determined by $f$ are the same in both
    settings. Moreover, the cellular chain complex of $M$ is precisely the chain
    complex associated with $\K_M$. Consequently, the Betti numbers of $M$ and of
    each of its weakly critical sets agree with the corresponding Betti numbers
    computed as Lefschetz complexes. It follows from the definitions of the two
    remainder terms that
    \[
        R_t(M,f)=R_t(\K_M,f).
    \]
    By Theorem~\ref{thm:MB}, $R_t(\K_M,f)$ has nonnegative integer coefficients,
    and hence so does $R_t(M,f)$.
\end{proof}

\begin{corollary}[Morse inequality for Poincar\'{e} series]\label{cor:M}
    Let $f$ be a discrete Morse function on $\K$. Then the formal power series $r_t(\K,f)$
    has nonnegative integer coefficients and
    \[
    \sum_{k=0}^\infty m_k t^k=P_t(\K)+(1+t)r_t(\K,f).
    \]
\end{corollary}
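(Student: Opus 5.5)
The plan is to reduce Corollary~\ref{cor:M} to Theorem~\ref{thm:MB} by passing through the Morse-Bott setting. There are three steps: view $f$ as a discrete Morse-Bott function, identify the left-hand sides of the two identities, and then identify the two remainder series.

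First, by Proposition~\ref{prp:MisMB}, $f$ is a discrete Morse-Bott function on $\K$. Hence Lemma~\ref{lem:MBpoly} and Theorem~\ref{thm:MB} both apply to $(\K,f)$. Theorem~\ref{thm:MB} shows that $R_t(\K,f)$ has nonnegative integer coefficients, and it gives
\[
\sum_{C\in\C_f}P_t(C)=P_t(\K)+(1+t)R_t(\K,f).
\]

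Second, by Proposition~\ref{prp:wcofmorse}, the left-hand side equals $\sum_{k}m_k t^k$. This is because $\W(\K,f)$ splits into the critical cells, viewed as one-cell complexes, together with acyclic two-cell summands.

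Third, the identity $\sum_k m_k t^k=P_t(\K)+(1+t)r_t(\K,f)$ itself is already Lemma~\ref{lem:poly}, so nothing new is needed for it. To compare remainders, I would follow Remark~\ref{rem:MtoMB}: subtract the two identities to get
\[
(1+t)\bigl(r_t(\K,f)-R_t(\K,f)\bigr)=0,
\]
and conclude $r_t(\K,f)=R_t(\K,f)$. Then $r_t$ inherits the nonnegativity of $R_t$.

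The only point needing care is the division by $1+t$ in formal power series. This is harmless because $1+t$ is invertible in $\Z[[t]]$. More concretely, if $(1+t)a(t)=0$, then comparing coefficients from the lowest degree upward gives $a_0=0$ and $a_k+a_{k-1}=0$, so $a=0$ by induction. I do not expect any real obstacle beyond this; the substantive work is already in Corollary~\ref{cor:count}.
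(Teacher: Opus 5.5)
Your proposal is correct and matches the paper's proof: the identity comes from Lemma~\ref{lem:poly}, and the equality $r_t(\K,f)=R_t(\K,f)$ from Remark~\ref{rem:MtoMB} (via Propositions~\ref{prp:MisMB} and~\ref{prp:wcofmorse}) transfers the nonnegativity from Theorem~\ref{thm:MB}. Your coefficientwise justification of cancelling the factor $1+t$ fills in a step the paper leaves implicit.
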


\begin{proof}
    The identity follows from Lemma~\ref{lem:poly}. By Remark~\ref{rem:MtoMB},
    we have $r_t(\K,f)=R_t(\K,f)$, so the assertion about the coefficients
    follows from Theorem~\ref{thm:MB}.
\end{proof}

\section*{Acknowledgments}
The author is deeply grateful to Professor Tomoo Yokoyama for his valuable suggestions and helpful advice.

\section*{Declaration on the use of generative AI}
During the preparation of this manuscript, the author used Claude (Anthropic) and
ChatGPT (OpenAI) to assist with English-language drafting and editing, manuscript
organization, and bibliographic searches. The author reviewed and verified all
AI-assisted material, independently checked the cited sources, and takes full
responsibility for the content of the manuscript.

\bibliographystyle{abbrv}
\bibliography{DMBFonLC}

\end{document}